\NeedsTeXFormat{LaTeX2e}

\documentclass[11pt, a4paper]{amsart}

\usepackage{amsmath, amsthm, amssymb}

\newcommand{\bR}{\mathbb{R}}
\newcommand{\bC}{\mathbb{C}}
\newcommand{\bP}{\mathbb{P}}
\newcommand{\sP}{\mathsf{P}}
\newcommand{\sH}{\mathsf{H}}
\newcommand{\bN}{\mathbb{N}}
\newcommand{\rd}{\mathrm{d}}
\newcommand{\UH}{\mathit{UH}}
\newcommand{\AT}{\mathit{AT}}

\newcommand{\SAT}{\mathit{SAT}}
\newcommand{\FRL}{\operatorname{FRL}}
\newcommand{\PS}{\operatorname{PS}}
\newcommand{\cF}{\mathcal{F}}
\newcommand{\cJ}{\mathcal{J}}
\newcommand{\sd}{\mathsf{d}}
\newcommand{\can}{\operatorname{can}}

\newcommand{\wan}{\operatorname{wan}}
\newcommand{\CO}{\operatorname{CO}}
\newcommand{\PC}{\operatorname{PC}}
\newcommand{\cE}{\mathcal{E}}
\newcommand{\cS}{\mathcal{S}}
\newcommand{\sF}{\mathsf{F}}
\newcommand{\sJ}{\mathsf{J}}
\newcommand{\diag}{\operatorname{diag}}
\newcommand{\Fekete}{\operatorname{Fekete}}
\newcommand{\Lip}{\operatorname{Lip}}
\newcommand{\diam}{\operatorname{diam}}
\newcommand{\PSU}{\mathit{PSU}}
\newcommand{\PGL}{\mathit{PGL}}
\newcommand{\Capa}{\operatorname{Cap}}
\newcommand{\supp}{\operatorname{supp}}
\newcommand{\Id}{\mathrm{Id}}

\newcommand{\Res}{\operatorname{Res}}

\numberwithin{equation}{section}
\theoremstyle{plain}
\newtheorem{theorem}{Theorem}[section]

\newtheorem{lemma}{Lemma}[section]
\newtheorem{proposition}{Proposition}
\newtheorem{mainth}{Theorem}
\newtheorem{maincoro}{Corollary}
\theoremstyle{definition}
\newtheorem{definition}{Definition}[section]
\newtheorem*{acknowledgement}{Acknowledgement}
\theoremstyle{remark}
\newtheorem{remark}{Remark}[section]
\newtheorem{fact}{Fact}[section]

\begin{document} 

\title[Fekete configuration in non-archimedean dynamics]{
Fekete configuration, quantitative equidistribution
and wandering critical orbits
in non-archimedean dynamics}

\author{Y\^usuke Okuyama}
\address{
Division of Mathematics,
Kyoto Institute of Technology,
Sakyo-ku, Kyoto 606-8585 Japan.}
\curraddr{
UPMC Univ Paris 06, UMR 7586, Institut de
Math\'ematiques de Jussieu, 4 place Jussieu, F-75005 Paris,
France.}

\email{okuyama@kit.ac.jp}

\date{\today}

\dedicatory{Dedicated to Professor Masahiko Taniguchi on his sixtieth birthday}

\subjclass[2010]{Primary 11G50; Secondary 37F10}

\keywords{Fekete configuration, quantitative equidistribution, wandering critical orbits,
non-archimedean dynamics, complex dynamics}

\maketitle

\begin{abstract}
Let $f$ be a rational function of degree $d>1$
on the projective line over a possibly non-archimedean 
algebraically closed field. A well-known process initiated by Brolin
considers the pullbacks of points under iterates of $f$, and produces
an important equilibrium measure. We define the asymptotic Fekete property
of pullbacks of points, which means that they mirror the equilibrium measure appropriately.
As application, we obtain an error estimate of equidistribution 
of pullbacks of points for $C^1$-test functions 
in terms of the proximity of wandering critical orbits to the initial points, 
and show that the order is $O(\sqrt{kd^{-k}})$ upto a specific exceptional set of capacity $0$ 
of initial points, which is contained in the set of superattracting periodic points
and the omega-limit set of wandering critical points from the Julia set or the presingular 
domains of $f$. As an application in arithmetic dynamics,
together with a dynamical Diophantine approximation, these estimates recover
Favre and Rivera-Letelier's quantitative equidistribution in a purely local manner.
\end{abstract}

\section{Introduction}\label{sec:intro}
Let $K$ or $(K,|\cdot|)$ be an algebraically closed field 
complete with respect to a non-trivial absolute value
(or valuation) $|\cdot|$. The field $K$ is said to be non-archimedean if
it satisfies the strong triangle inequality
\begin{gather*}
 |z-w|\le\max\{|z|,|w|\} 
\end{gather*}
(e.g.\ $p$-adic $\bC_p$), otherwise
$K$ is archimedean and indeed $K\cong\bC$.
When $K$ is non-archimedean,
the projective line $\bP^1=\bP^1(K)=K\cup\{\infty\}$ is 
totally disconnected and non-compact. 
A subset in $K$ is called a {\itshape ball}
if it is written as $\{z\in K;|z-a|\le r\}$ for some center
$a\in K$ and radius $r\ge 0$.
The alternative that two balls in $K$ either nest or are mutually disjoint
induces a partial order over all balls in $K$, which
is nicely visualized by the Berkovich projective line $\sP^1=\sP^1(K)$.
This regards each element of $\sP^1\setminus\{\infty\}$ as an equivalence class of 
nesting balls in $K$, and produces a compact augmentation of $\bP^1$
containing $\bP^1$ as a dense subset. 
A typical point of the hyperbolic space
\begin{gather*}
 \sH^1=\sH^1(K):=\sP^1\setminus\bP^1
\end{gather*}
is a ball of radius $>0$ in $K$, while each point of $K=\bP^1\setminus\{\infty\}$ 
is a ball of radius $0$. On the other hand,
when $K$ is archimedean, $\sP^1$ and $\bP^1$ are identical and $\sH^1=\emptyset$.

Let $f$ be a rational function on $\bP^1$ of degree $d>1$. 
The action of $f$ on $\bP^1$ canonically extends to a continuous,
open, surjective and fiber-discrete endomorphism on $\sP^1$, 
preserving both $\bP^1$ and $\sH^1$. To each $a\in\sP^1$, 
the local degree $\deg_a f$ of $f$ at $a$ also canonically extends.
The exceptional set of the extended $f$ is, as a subset of $\sP^1$, still defined by
\begin{gather*}
 E(f):=\{a\in\bP^1;\#\bigcup_{k\in\bN}f^{-k}(a)<\infty\}.
\end{gather*}
In 1965, Brolin \cite{Brolin} introduced the equilibrium measure $\mu_f$ in the situation that
$f$ is a complex polynomial, which has proved the basis of many later extensions and applications. 
The following equidistribution theorem was established in \cite{Brolin}, \cite{FLM83},
\cite{Lyubich83} for archimedean $K$, and  in \cite{FR09}
generalized to non-archimedean $K$: for each $a\in\sP^1$, 
let $(a)$ be the Dirac measure at $a$ on $\sP^1$. 
If $a\in\sP^1\setminus E(f)$, then the averaged pullbacks 
\begin{gather*}
 \frac{(f^k)^*(a)}{d^k}=\frac{1}{d^k}\sum_{w\in f^{-k}(a)}(\deg_w (f^k))\cdot(w)
\end{gather*}
tends to $\mu_f$ weakly on $\sP^1$.

\begin{definition}
The Berkovich Fatou and Julia sets of $f$ in $\sP^1$
are $\sF(f)$ and $\sJ(f)$, respectively (cf.\ \cite[\S 2.3]{FR09}).
Let $\cF(f)$ and $\cJ(f)$ be 
the classical Fatou and Julia sets of $f$ in $\bP^1$,
which agrees with the intersection of $\sF(f)$ and $\sJ(f)$ with $\bP^1$, respectively.
Let $\SAT(f)$ and $\AT(f)$ be the sets of superattracting and
(possibly super)attracting periodic points of $f$ in $\bP^1$, respectively.
\end{definition} 

In \cite{DOproximity}, for archimedean $K$, the error term of equidistribution 
\begin{gather}
 \left|\int_{\sP^1}\phi\rd\left(\frac{(f^k)^*(a)}{d^k}-\mu_f\right)\right|\label{eq:errorequidist}
\end{gather}
was estimated using a Nevanlinna theoretical covering theory argument:

\begin{theorem}[cf.\ {\cite[Theorem 2 and (4.2)]{DOproximity}}]\label{th:DO}
Let $f$ be a rational function on $\bP^1=\bP^1(\bC)$ of degree $d>1$.
Then for every $C^2$-test function $\phi$ on $\bP^1$,
\begin{gather}
 \left|\int_{\bP^1}\phi\rd\left(\frac{(f^k)^*(a)}{d^k}-\mu_f\right)\right|
=\begin{cases}
  O(d^{-k}) & (a\in\bP^1\setminus\UH(f)),\label{eq:proximity}\\
  O(kd^{-k}) & (a\in\cF(f)\setminus\SAT(f)),\\
  O(\eta^kd^{-k}) & (a\in\bP^1\setminus\SAT(f)),\\
  O((\deg_{a_0} f^k)d^{-k}) & (a\in U_{a_0}, a_0\in\SAT(f))
\end{cases}
\end{gather}
as $k\to\infty$. Here the third estimate applies to any fixed $\eta>1$, 
the fourth one applies to some neighborhood $U_{a_0}$ of any $a_0\in\SAT(f)$,
and the constants implicit in each $O(\cdot)$ are locally uniform on $a$,
and independent of $\eta$ in the third estimate.
The unhyperbolic locus $\UH(f)$ is defined in Definition $\ref{th:semihyp}$ below.
We note that $\UH(f)\cap\cF(f)=\AT(f)$.
\end{theorem}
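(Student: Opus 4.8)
Here $K=\bC$, so $\sP^1=\bP^1$ and this is a statement of classical potential theory on $\bP^1(\bC)$; I would argue in three steps — a potential-theoretic reduction to a single covering function, an Ahlfors covering-theory estimate of that function, and a reading-off of the four cases from the dynamics. Write $\omega$ for the Fubini--Study probability form, $\sigma$ for the chordal metric (so $-\log\sigma\ge 0$, $\omega(\{\sigma(\cdot,a)<s\})=s^2$, and $\Delta\bigl(\log\sigma(\cdot,a)\bigr)=\delta_a-\omega$), and $g_\mu(\cdot,\cdot)$ for the Arakelov--Green function of $\mu_f$, i.e.\ $\Delta_z g_\mu(z,a)=\delta_a-\mu_f$ with $\int g_\mu(z,a)\,\rd\mu_f(z)=0$; since the equilibrium potential is H\"older continuous, $g_\mu(z,a)=\log\sigma(z,a)+\rho(z,a)$ with $\|\rho\|_\infty=:C_0<\infty$, so in particular $g_\mu(\cdot,a)\le C_0$. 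Iterating $f^*\mu_f=d\mu_f$ gives $(f^k)^*\mu_f=d^k\mu_f$, hence $u_{k,a}:=d^{-k}g_\mu(f^k(\cdot),a)$ satisfies $\Delta u_{k,a}=\frac{(f^k)^*(a)}{d^k}-\mu_f$; since $\phi$ is $C^2$ ($\Delta\phi$ a bounded density, $\int\Delta\phi=0$) and $u_{k,a}\in L^1(\omega)$, self-adjointness of $\Delta$ on $\bP^1$ yields
\[
 \left|\int_{\bP^1}\phi\,\rd\!\left(\tfrac{(f^k)^*(a)}{d^k}-\mu_f\right)\right|
 =\left|\int_{\bP^1}(u_{k,a}-\bar u_{k,a})\,\Delta\phi\right|
 \le \|\Delta\phi\|_{\infty}\,\bigl\|u_{k,a}-\bar u_{k,a}\bigr\|_{L^1(\omega)},
\]
with $\bar u_{k,a}$ the $\omega$-mean. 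Setting $N_k(a):=\int_{\bP^1}\bigl(-\log\sigma(f^k(z),a)\bigr)\,\rd\omega(z)\ge 0$, one has $\bar u_{k,a}=d^{-k}(-N_k(a)+O(C_0))$ and $u_{k,a}\le C_0d^{-k}$, so $(u_{k,a}-\bar u_{k,a})^+\le d^{-k}(N_k(a)+O(C_0))$ pointwise, whence $\|u_{k,a}-\bar u_{k,a}\|_{L^1(\omega)}=2\int(u_{k,a}-\bar u_{k,a})^+\,\rd\omega\le 2d^{-k}(N_k(a)+O(C_0))$. Thus the whole theorem reduces to proving, with constants locally uniform in $a$, that $N_k(a)$ is $O(1)$, $O(k)$, $o(\eta^k)$ for every $\eta>1$, and $O(\deg_{a_0}f^k)$ in the four respective cases.

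The second step estimates the covering function $N_k(a)=\int_{\bP^1}(-\log\sigma(w,a))\,\rd\bigl((f^k)_*\omega\bigr)(w)=\int_0^{s_0}s^{-1}\,\omega\bigl((f^k)^{-1}\{\sigma(\cdot,a)<s\}\bigr)\,\rd s+O(1)$ for a fixed $s_0\in(0,1)$ (layer-cake over $\sigma$-balls, the $O(1)$ absorbing $s\ge s_0$), and here I would invoke Ahlfors' theory of covering surfaces. The covering measure $(f^k)_*\omega$ has density $w\mapsto\sum_{f^k(z)=w}(\text{spherical Jacobian of }f^k\text{ at }z)^{-1}$, singular exactly along the critical values of $f^k$, that is along the critical-orbit segments $\bigcup_{1\le j\le k}f^j(\operatorname{Crit}(f))$. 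For $s$ between the separation scale $\asymp d^{-k}$ of $f^{-k}(a)$ and the fixed $s_0$, Ahlfors' first main inequality bounds $\omega\bigl((f^k)^{-1}\{\sigma(\cdot,a)<s\}\bigr)$ by the main term $\asymp d^ks^2$ plus a boundary-length/ramification defect governed by the critical values of $f^k$ lying in that ball; for smaller $s$ the preimage splits into sheets, the sheet near $w\in f^{-k}(a)$ being an $m$-fold branched disc of Euclidean radius $\asymp (s/|c_w|)^{1/m}$ with $m=\deg_w f^k$ and $c_w$ the leading Taylor coefficient of $f^k$ at $w$, so that — via a first-main-theorem/Jensen bookkeeping relating the $|c_w|$ to the derivative cocycle of $f^k$ along the backward orbit of $a$ — each sheet's contribution is controlled by $m$ and by the proximity of the forward critical orbits to $a$. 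Integrating in $s$ and summing over $w$ produces $N_k(a)$ up to $O(1)$.

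The third step is the dynamical dictionary. (i) If $a\notin\UH(f)$, the $\deg_w f^k$ along $f^{-k}(a)$ stay bounded and no forward critical orbit accumulates to $a$ in a non-summable way, so the defect is $O(1)$ and $N_k(a)=O(1)$, giving $O(d^{-k})$. (ii) If $a\in\cF(f)\setminus\SAT(f)$, the only possibly unbounded cluster of $f^{-k}(a)$ sits in an attracting Fatou cycle (recall $\UH(f)\cap\cF(f)=\AT(f)$), where the contraction is geometric of rate $|(f^p)'|^{\pm1}$ along the period-$p$ cycle, so the defect is $O(k)$ and $N_k(a)=O(k)$, giving $O(kd^{-k})$. (iii) For general $a\notin\SAT(f)$, $N_k(a)$ grows sub-exponentially: it can grow exponentially only if some $w\in f^{-k}(a)$ has $\deg_w f^k$ growing exponentially, which by the chain rule and the Fatou--Julia classification forces $a$ onto a superattracting cycle; hence $N_k(a)=o(\eta^k)$ for every $\eta>1$, giving $O(\eta^kd^{-k})$ with a constant independent of $\eta$. (iv) If $a\in U_{a_0}$ with $a_0\in\SAT(f)$ of period $p$, one takes $U_{a_0}$ inside the B\"ottcher domain of $a_0$, where $f^{kp}$ is conjugate to $z\mapsto\varphi^{-1}\bigl(\varphi(z)^{\deg_{a_0}f^{kp}}\bigr)$; the proximity of $a$ to the critical value $a_0=f^{kp}(a_0)$ of $f^{kp}$, whose ramification index $\deg_{a_0}f^{kp}$ dominates the local sheet structure near $a_0$, gives $N_k(a)=O(\deg_{a_0}f^k)$ uniformly on $U_{a_0}$, giving $O((\deg_{a_0}f^k)d^{-k})$.

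The main obstacle is the second step: making Ahlfors' covering inequality sharp \emph{at every scale} $s\in(0,s_0)$ and summing the resulting bounds without loss — i.e.\ showing that the ramification of $f^k$ over a shrinking neighbourhood of $a$, a priori as large as $d^k$, is actually controlled by the finitely many critical orbits and, away from the superattracting case, grows only sub-exponentially — together with the Fatou--Julia input that governs local-degree growth along backward orbits in each regime and the bookkeeping needed to keep all constants locally uniform in $a$ (and independent of $\eta$ in case (iii)). By contrast, the potential-theoretic reduction of the first paragraph is routine.
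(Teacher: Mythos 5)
Theorem \ref{th:DO} is not proved in this paper: it is imported, with the attribution ``cf.\ \cite[Theorem 2 and (4.2)]{DOproximity},'' as archimedean background, and the paper's own main results (Theorems \ref{th:algebraic}--\ref{th:derived}) are established by an entirely different potential-theoretic route in Sections \ref{eq:proof} and \ref{sec:error}. So there is no internal proof to compare your proposal against; one can only compare it with the Drasin--Okuyama argument, whose flavor the introduction describes as ``a Nevanlinna theoretical covering theory argument.'' Against that benchmark, your Step 1 --- writing $\Delta\bigl(d^{-k}g_\mu(f^k(\cdot),a)\bigr)=d^{-k}(f^k)^*(a)-\mu_f$, integrating by parts against $\Delta\phi$, and reducing to $\|u_{k,a}-\bar u_{k,a}\|_{L^1}\le 2d^{-k}(N_k(a)+O(1))$ --- is correct and is the right reduction, and pointing to Ahlfors' theory of covering surfaces is the right signpost.

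However, Steps 2 and 3 contain errors that go beyond bookkeeping. In Step 2 the asserted ``main term'' $\omega\bigl((f^k)^{-1}\{\sigma(\cdot,a)<s\}\bigr)\asymp d^k s^2$ cannot be right: $(f^k)_*\omega$ is a probability measure, so $\omega\bigl((f^k)^{-1}B[a,s]\bigr)=(f^k)_*\omega\bigl(B[a,s]\bigr)\le 1$ for every $s$, whereas $d^k s^2\to\infty$ at any fixed $s$. The factor $d^k$ belongs to the multiplicity-weighted pullback area $\int_{(f^k)^{-1}B[a,s]}(f^k)^*\omega=d^k\,\omega(B[a,s])$, which is an exact identity with no defect and hence carries none of the information you need; the quantity to estimate is $(f^k)_*\omega(B[a,s])$, whose blow-up near $a$ is a ramification-defect (second-main-theorem) phenomenon, not a first covering inequality. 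In Step 3, the reasoning for case (iii) invokes the wrong mechanism: you claim $N_k(a)$ can grow exponentially only if some $\deg_w f^k$ does, but for $a\in\bP^1\setminus\SAT(f)$ the local degrees $\eta_{a,k}=\max_{w\in f^{-k}(a)}\deg_w f^k$ are \emph{uniformly bounded} by $d^{2d-2}$ (the paper's Fact \ref{th:cocycle}, after \cite{ES90}); on your own account this would give $N_k(a)=O(1)$ for all $a\notin\SAT(f)$, making case (i) needlessly restricted and case (iii) vacuous. What actually separates (i), (ii) and (iii) is the \emph{proximity} $\sum_j d^{-j}\log\frac{1}{[f^j(c),a]}$ of forward critical orbits to $a$: for $a\in\UH(f)\cap\cJ(f)$ (parabolic periodic points, and points of $\omega(c)$ for recurrent critical $c\in\cJ(f)$, by Ma\~n\'e's theorem quoted before Corollary \ref{th:rotation}) this term can accumulate, while for $a\notin\UH(f)$ it stays bounded. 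That is the contribution your Step 2 should be producing and does not. You flag the Step 2/3 difficulties yourself, and I agree with that self-assessment: as written the proposal is a correct reduction plus a correct signpost, not a proof.
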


More interestingly, in arithmetic dynamics,
Favre and Rivera-Letelier \cite[Corollaire 1.6]{FR06}
estimated the order of \eqref{eq:errorequidist}
by $O(\sqrt{kd^{-k}})$ for each $C^1$-test function $\phi$ on $\sP^1$ and
every {\itshape algebraic} $a\in\bP^1\setminus\SAT(f)$
(see (\ref{eq:adelic}) below). 
One of our aims is to establish such a {\itshape quantitative equidistribution}
for general valued fields $K$.
For this purpose, we study quantitatively
the {\itshape $f$-asymptotic Fekete property} 
of $((f^k)^*(a))$ on $\sP^1$
in terms of the {\itshape proximity} of {\itshape wandering}
critical orbits of $f$ to the initial point $a$. 

\begin{definition}
 Let $[\cdot,\cdot]$ be the normalized chordal metric on $\bP^1$.
 A chordal open ball in $\bP^1$ of center $w\in\bP^1$ and radius $r>0$ is 
 $B[w,r]:=\{z\in\bP^1;[z,w]<r\}$. For any subset $S\subset\bP^1$ and $z\in\bP^1$,
 put $[z,S]:=\inf_{w\in S}[z,w]$.
 
 Under the action $f$ on $\bP^1$,
 a point $z_0\in\bP^1$ is said to be {\itshape wandering}
 if $\#\{f^k(z_0);k\in\bN\cup\{0\}\}=\infty$.
 We say $z_0$ to be {\itshape preperiodic} if $z_0$ is not wandering.
 For each $z_0\in\bP^1$, 
 the {\itshape omega limit set} of $(f^k(z_0))$, or of $z_0$, in $\bP^1$ is 
 \begin{gather*}
  \omega(z_0):=\bigcap_{N\in\bN}\overline{\{f^k(z_0);k\ge N\}},
 \end{gather*} 
 where the closure is taken in $\bP^1$ under $[\cdot,\cdot]$
 (then $\lim_{k\to\infty}[f^k(z_0),\omega(z_0)]=0$).
 We say $z_0\in\bP^1$ to be {\itshape pre-recurrent} if
 $\{f^k(z_0);k\in\bN\cup\{0\}\}\cap\omega(z_0)\neq\emptyset$,
 and especially to be {\itshape recurrent} if $z_0\in\omega(z_0)$. Put
\begin{gather*}
 C(f):=\{c\in\bP^1;f'(c)=0\},\\
 C(f)_{\wan}:=\{c\in C(f);c\text{ is wandering under }f\},\\
 \CO(f)_{\wan}:=\{f^k(c);c\in C(f)_{\wan}, k\in\bN\},\\
 \PC(f):=\overline{\{f^k(c);c\in C(f),k\in\bN\}},
\end{gather*}
where in the final definition, the closure is taken in $\bP^1$ under $[\cdot,\cdot]$.
\end{definition}

If $f$ has characteristic $0$, then the Riemann-Hurwitz formula
asserts that there are exactly $(2d-2)$ critical points of $f$ in $\bP^1$
taking into account the multiplicity $(\deg_c f-1)$ of each $c\in C(f)$.

\begin{remark}
 In Section \ref{sec:facts}, we gather a background on the potential
 theory and dynamics on $\sP^1$. For non-archimedean $K$, the chordal metric $[\cdot,\cdot]$ 
 extends to $\sd$ and to $\delta_{\can}$ on $\sP^1$ respectively
 as the small model metric
 and the generalized Hsia kernel with respect to the canonical point $\cS_{\can}\in\sP^1$, 
 and the big model metric $\rho$ is also introduced on $\sH^1$. 
 The equipped (Gel'fand) topology of $\sP^1$ (resp.\ $\sH^1$)
 is strictly weaker than that from $\sd$ (resp.\ $\rho$).
\end{remark}

The $f$-kernel
\begin{gather*}
 \Phi_f(\cS,\cS')=\log\delta_{\can}(\cS,\cS')-g_f(\cS)-g_f(\cS')
\end{gather*}
on $\sP^1$ is introduced in (\ref{eq:Arakerov}) in Section \ref{sec:facts},
where $g_f$ is the dynamical Green function of $f$ on $\sP^1$. We note that
$-\Phi_f$ agrees with the Arakelov Green (kernel) function of $f$ in \cite[\S10.2]{BR10},
and that
\begin{gather*}
 \{(\cS,\cS')\in\sP^1\times\sP^1;\Phi_f(\cS,\cS')=-\infty\}=
\diag_{\bP^1}:=\{(z,z)\in\bP^1\times\bP^1; z\in\bP^1\}.
\end{gather*}
A dynamical Favre and Rivera-Letelier bilinear form is
\begin{gather}
 (\mu,\mu')_f:=-\int_{\sP^1\times\sP^1\setminus\diag_{\bP^1}}\Phi_f(z,w)\rd(\mu\times\mu')(z,w)\label{eq:FRL}
 \end{gather}
for Radon measures $\mu,\mu'$ on $\sP^1$ (if exists).
For each $a\in\bP^1$ and each $k\in\bN$, the following quantities
\begin{gather}
\cE_f(k,a):=-\frac{1}{d^{2k}}((f^k)^*(a),(f^k)^*(a))_f,\label{eq:errorFekete}
\\ 
\notag\eta_{a,k}=\eta_{a,k}(f):=\max_{w\in f^{-k}(a)}\deg_w(f^k),\\
\notag\begin{aligned}
 D_{a,k}=&D_{a,k}(f):=((f^k)^*(a)\times(f^k)^*(a))(\diag_{\bP^1})\\
 =&\int_{\sP^1}\deg_w(f^k)\rd((f^k)^*(a))(w)
 =\sum_{w\in f^{-k}(a)}(\deg_w(f^k))^2\in[d^k,d^k\eta_{a,k}]
\end{aligned}
\end{gather}
are fundamental. 

\begin{fact}\label{th:cocycle}
 If $K$ has characteristic $0$, then 
 $E(f)$ and $\SAT(f)$ are respectively characterized as follows ({cf. \cite[Lemma 1]{ES90}}):
 \begin{gather}
\label{eq:non-exceptional}
  \limsup_{j\to\infty}\eta_{a,j}^{1/j}
 \begin{cases}
 \le(d^3-1)^{1/3} & (a\in\bP^1\setminus E(f))\\
 =d & (a\in E(f))
 \end{cases},\\
\notag \sup_{j\in\bN}\eta_{a,j}
 \begin{cases}
 \le d^{2d-2} & (a\in\bP^1\setminus\SAT(f))\\
 =\infty & (a\in\SAT(f))
 \end{cases}.
\end{gather}
\end{fact}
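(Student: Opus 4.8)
The plan is to derive both characterizations from the multiplicativity of local degrees along orbits,
\begin{gather*}
 \deg_w(f^k)=\prod_{i=0}^{k-1}\deg_{f^i(w)}(f)\in[1,d^k],
\end{gather*}
so that $\deg_w(f^k)>1$ precisely when the forward orbit $w,f(w),\dots,f^{k-1}(w)$ meets $C(f)$. The characteristic-$0$ hypothesis will enter only through the Riemann--Hurwitz count recalled above, which gives at most two \emph{totally ramified} points (points $c$ with $\deg_cf=d$) and at most $2d-2$ critical points of $f$ in $\bP^1$.

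First I would dispose of the two ``lower'' alternatives, which need no ramification count. If $a\in E(f)$, put $N:=\#\bigcup_{k}f^{-k}(a)<\infty$; since $\sum_{w\in f^{-j}(a)}\deg_w(f^j)=d^j$ and $\#f^{-j}(a)\le N$, some $w\in f^{-j}(a)$ has $\deg_w(f^j)\ge d^j/N$, so $\eta_{a,j}\ge d^j/N$, and together with $\eta_{a,j}\le d^j$ this yields $\lim_{j\to\infty}\eta_{a,j}^{1/j}=d$. If $a\in\SAT(f)$, then $a$ is periodic of some exact period $p$ and its cycle contains a critical point, so $m:=\deg_a(f^p)\ge 2$; since $a\in f^{-kp}(a)$ and $\deg_a(f^{kp})=m^k$, we get $\sup_{j}\eta_{a,j}=\infty$.

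For the two ``upper'' alternatives I would prove, for a preimage $w\in f^{-j}(a)$ with orbit $w_i:=f^i(w)$ ($0\le i\le j$, $w_j=a$), a local \emph{window lemma}. (i) If $a\notin E(f)$, then $\deg_{w_i}(f)\deg_{w_{i+1}}(f)\deg_{w_{i+2}}(f)\le d^3-1$ for every $i$ with $0\le i\le j-3$: otherwise this product, an integer in $[1,d^3]$, equals $d^3$, forcing $w_i,w_{i+1},w_{i+2}$ to be totally ramified; since there are at most two totally ramified points, two of $w_i,w_{i+1},w_{i+2}$ coincide, which makes the common point a totally ramified periodic point of exact period $1$ or $2$ whose cycle is a finite totally invariant set of totally ramified points and contains $a=f^{j-i}(w_i)$ or $a=f^{j-i-1}(w_{i+1})$, whence $a\in E(f)$, a contradiction. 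Partitioning $\{0,\dots,j-1\}$ into $\lfloor j/3\rfloor$ disjoint length-$3$ windows and bounding the at most two leftover factors by $d$ then gives $\deg_w(f^j)\le d^2(d^3-1)^{\lfloor j/3\rfloor}$, hence $\limsup_{j}\eta_{a,j}^{1/j}\le(d^3-1)^{1/3}$. (ii) If $a\notin\SAT(f)$, then $w_0,\dots,w_{j-1}$ meets each critical point at most once: if $w_i=w_{i'}=c\in C(f)$ with $i<i'\le j-1$, then $c$ is periodic and its cycle contains the critical point $c$, while $a=f^{j-i}(c)$ lies in that cycle, so $a\in\SAT(f)$, a contradiction. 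Therefore $\deg_w(f^j)\le\prod_{c\in C(f)}\deg_cf\le d^{\#C(f)}\le d^{2d-2}$, so $\sup_{j}\eta_{a,j}\le d^{2d-2}$.

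The only step that is not bookkeeping is the window lemma, and within it the extraction, from a short run of critical coincidences, of a genuinely short totally ramified cycle (for (i)) or of a superattracting cycle (for (ii)) through which $a$ must factor. The points to watch are that the relevant iterate indices ($j-i$ and $j-i-1$ in (i), and $j-i$ in (ii)) are strictly positive, so that $a$ really is a forward iterate of the periodic point in question, and that the Riemann--Hurwitz bounds are invoked over a field of characteristic $0$; both are guaranteed here. Everything else — the chain rule, the partition into windows, and the elementary inequalities $\eta_{a,j}\le d^j$ and $d^{\#C(f)}\le d^{2d-2}$ — is routine.
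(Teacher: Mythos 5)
The paper does not actually prove this Fact; it is stated without argument and attributed to \cite[Lemma 1]{ES90}, so there is no ``paper's own proof'' to compare against. Your argument is the classical one of Er\"emenko--Sodin and it is correct as written. The lower alternatives are elementary pigeonhole ($a\in E(f)$) and the multiplicativity $\deg_a(f^{kp})=(\deg_a f^p)^k$ with $\deg_a f^p\ge 2$ ($a\in\SAT(f)$), and neither uses characteristic $0$. The characteristic-$0$ hypothesis enters only where you say it does, through Riemann--Hurwitz: at most two totally ramified points for the window lemma (i), and $\#C(f)\le 2d-2$ for (ii). Two details worth making explicit when you write it up, since they are exactly where the argument would break if stated carelessly: in case (i), after extracting a period-$1$ or period-$2$ totally ramified cycle from the coincidence among $w_i,w_{i+1},w_{i+2}$, you need $j-i>0$ (respectively $j-(i+1)>0$) to conclude $a$ actually lies in that cycle, and this is guaranteed by the constraint $i\le j-3$ defining the windows; in case (ii), you likewise need $j-i>0$, guaranteed by $i\le j-1$, and you should note that $c\in C(f)$ in its own cycle forces the cycle multiplier to be $0$ (the multiplier is a product of derivatives, not local degrees), so the cycle is superattracting. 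Also, the bookkeeping at the end of (i) should allow for $j<3$ (no full window, $\deg_w(f^j)\le d^2$), which your final bound $d^2(d^3-1)^{\lfloor j/3\rfloor}$ covers with the empty-product convention. With these small clarifications the proposal is a complete and correct proof of the Fact.
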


One of our principal results is the following 
estimates of $\cE_f(k,a)$ in terms of the proximity of wandering critical orbits $\CO(f)_{\wan}$ 
to the initial $a\in\sP^1$: 

\begin{mainth}\label{th:algebraic}
Let $f$ be a rational function on $\bP^1=\bP^1(K)$ of degree $d>1$. Then
for every $a\in\sH^1$ and every $k\in\bN$, 
 \begin{gather}
  |\cE_f(k,a)|\le\frac{\Phi_f(a,a)}{d^k},\label{eq:non-classical}
 \end{gather}
and $a\mapsto\Phi_f(a,a)$ is locally bounded on
$\sH^1$ under $\rho$. If in addition $K$ has characteristic $0$, 
 then there is $C_f>0$ such that for every $a\in\bP^1$ and every $k\in\bN$, 
\begin{multline}
  -\frac{1}{d^k}\sum_{j=1}^k\left(\eta_{a,j}\sum_{c\in C(f)\setminus
 f^{-j}(a)}\frac{1}{d^j}\log\frac{1}{[f^j(c),a]}\right)
 -\frac{C_f}{d^k}\sum_{j=1}^k\eta_{a,j}
  -\frac{C_{f,a}}{d^k}\\
   \le\cE_f(k,a)\le-\frac{1}{d^k}\sum_{j=1}^k\left(\sum_{c\in C(f)\setminus
  f^{-j}(a)}\frac{1}{d^j}\log\frac{1}{[f^j(c),a]}\right)
  +\frac{C_f}{d^k}\sum_{j=1}^k\eta_{a,j}+\frac{C_{f,a}}{d^k}.\label{eq:algebraic}
\end{multline}
Here the extra constant $C_{f,a}>0$ is independent of $k$, and
even vanishes if $a\in\bP^1\setminus\CO(f)_{\wan}$.
The sums over $C(f)\setminus f^{-j}(a)$
take into account the multiplicity $(\deg_c f-1)$ of each $c$.
\end{mainth}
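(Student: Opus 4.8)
The plan is to compute $\cE_f(k,a) = -d^{-2k}((f^k)^*(a),(f^k)^*(a))_f$ by unwinding the definition of the bilinear form $(\cdot,\cdot)_f$ and the $f$-kernel $\Phi_f$, and then tracking the discrepancy that accumulates from the critical points of $f$ over the $k$ iterates. I would start from the defining identity $\Phi_f(\cS,\cS') = \log\delta_{\can}(\cS,\cS') - g_f(\cS) - g_f(\cS')$, and use the fundamental functional equation of the dynamical Green function (namely $g_f\circ f = d\cdot g_f + \text{const}$, equivalently the relation between $\delta_{\can}(f(\cS),f(\cS'))$ and $\prod$ over preimages, which is the potential-theoretic form of $f^*\mu_f = d\cdot\mu_f$). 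The key calculational engine will be a telescoping/cocycle argument: expressing $((f^k)^*(a),(f^k)^*(a))_f$ in terms of $((f^{k-1})^*(a),(f^{k-1})^*(a))_f$ plus a one-step error term, and iterating. Each one-step error is governed by the failure of $f$ to be unramified, i.e.\ by $\sum_{c\in C(f)}(\deg_c f - 1)\log(1/\delta_{\can}(\cdot, f(c)))$-type contributions coming from the Riemann–Hurwitz comparison of $\log|\,\mathrm{Res}\,|$ or the resultant-type term in the product formula for $\delta_{\can}$ pulled back under $f$.

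Concretely, I expect the recursion to take the shape
\begin{gather*}
 \cE_f(k,a) = \frac{1}{d}\cE_f(k-1,a) - \frac{1}{d^{2k}}\,R_k(a),
\end{gather*}
where $R_k(a)$ is an explicit quantity built from the pullback $(f^{k-1})^*(a)$ evaluated against the ramification divisor of $f$, namely a sum over $w\in f^{-(k-1)}(a)$ with weights $\deg_w(f^{k-1})$ of terms $\sum_{c:f(c)\text{ near }w}(\deg_c f-1)\log(1/[c,\cdot])$, plus a bounded remainder coming from $\log|\mathrm{Res}|$. Summing the recursion gives $\cE_f(k,a) = -\sum_{j=1}^k d^{-j-1}\cdot(\text{something})$, and re-indexing the double sum (pushing the inner critical sum forward by $f^{j}$ and comparing $[f^j(c),a]$ with $[c, f^{-j}(a)]$ via the Lipschitz/distortion estimates for $f$ on $\bP^1$) converts the ramification terms into the displayed sums $\sum_{c\in C(f)\setminus f^{-j}(a)} d^{-j}\log(1/[f^j(c),a])$. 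The asymmetry between the upper and lower bounds in \eqref{eq:algebraic} — the extra factor $\eta_{a,j}$ on the lower side — arises precisely because in the pullback measure $(f^j)^*(a)$ the masses $\deg_w(f^j)$ lie in $[1,\eta_{a,j}]$: discarding them gives one inequality, while bounding them above by $\eta_{a,j}$ gives the other. The bounded remainders collect into $C_f\cdot d^{-k}\sum_{j=1}^k\eta_{a,j}$, and the genuinely $a$-dependent but $k$-independent leftover (present only when some $f^j(c)$ actually collides with $a$, i.e.\ $a\in\CO(f)_{\wan}$) is absorbed into $C_{f,a}/d^k$.

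For the first assertion \eqref{eq:non-classical}: when $a\in\sH^1$ the diagonal $\diag_{\bP^1}$ carries no mass under $(f^k)^*(a)\times(f^k)^*(a)$ (all preimages of a point in $\sH^1$ lie in $\sH^1$, disjoint from $\bP^1$), so $\Phi_f$ is bounded on the relevant product and one gets a clean bound. Here I would use the reproducing/energy inequality for the bilinear form together with $(f^k)^*\mu_f = d^k\mu_f$ and $(\mu_f,\mu_f)_f = 0$ to see that $|\cE_f(k,a)|$ is controlled by the "self-energy" $\Phi_f(a,a)$ of the single point $a$ divided by $d^k$ — essentially because $(f^k)^*(a)/d^k - \mu_f$ has self-pairing $d^{-2k}((f^k)^*(a),(f^k)^*(a))_f$ and the pairing of $(f^k)^*(a)$ against $\mu_f$ vanishes. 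Local boundedness of $a\mapsto\Phi_f(a,a)$ on $\sH^1$ under $\rho$ follows from continuity of $g_f$ and of the diagonal restriction of $\log\delta_{\can}$ on $\sH^1$ (finite there, with at worst logarithmic growth controlled by the big model metric $\rho$).

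The main obstacle I anticipate is controlling the remainder terms uniformly in $a$ and $k$ simultaneously — in particular, verifying that the $\log|\mathrm{Res}|$-type contributions at each step aggregate into a single constant $C_f$ times $d^{-k}\sum_j\eta_{a,j}$ rather than something that grows, and isolating exactly the collision contribution $C_{f,a}$ when an iterate of a wandering critical point hits $a$. Handling this cleanly will require the distortion estimates for $f$ near $C(f)$ (so that $-\log[f^j(c),a]$ and the intrinsic $\Phi_f$-contribution differ by a bounded amount), the finiteness of $C(f)$ via Riemann–Hurwitz in characteristic $0$, and Fact \ref{th:cocycle} to know that $\eta_{a,j}$ is uniformly bounded when $a\notin\SAT(f)$ (though the statement as written keeps $\eta_{a,j}$ explicit, so one does not actually need to invoke that bound here). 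A secondary technical point is the passage between $\delta_{\can}$ and the chordal metric $[\cdot,\cdot]$ on $\bP^1$, which is where some of the $k$-independent constants are generated.
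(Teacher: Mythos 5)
Your high-level architecture---a telescoping/cocycle structure, one-step errors driven by the ramification of $f$, the degree weights $\deg_w(f^j)\in[1,\eta_{a,j}]$ creating the asymmetry between the two bounds, a resultant/bifurcation constant absorbed into $C_f$, and a separate collision term $C_{f,a}$---is exactly the paper's plan. But the proposal has real gaps in the mechanism. The paper's key device, which you never introduce, is the regularized diagonal value $c_z(f):=\lim_{\bP^1\ni u\to z}\{\Phi_f(f(u),f(z))-(\deg_z f)\Phi_f(u,z)\}$, together with its cocycle $c_z(f^k)=\sum_{j=1}^k\deg_{f^j(z)}(f^{k-j})\,c_{f^{j-1}(z)}(f)$ and the Wronskian formula $c_z(f)=-\log|d|+B(f)+\sum_{c\in C(f)}\Phi_f(z,c)$ for $z\notin C(f)$. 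Your proposed one-step recursion $\cE_f(k,a)=\tfrac1d\cE_f(k-1,a)-R_k(a)/d^{2k}$ has the correct shape, but deriving it directly is not clean: the removed-diagonal integral defining $\cE_f(k,a)$ does not push forward under $f\times f$ without encountering $\Phi_f(z,z)=-\infty$ on pairs $z\neq w$ with $f(z)=f(w)$. The paper avoids this by first proving $\cE_f(k,a)=d^{-2k}\int c_z(f^k)\,\rd((f^k)^*(a))(z)$ (Lemma \ref{th:error}) and running the cocycle on the everywhere-finite $c_z(f^k)$. That same device is how $C_{f,a}$ is honestly isolated: at a collision $f^j(c)=a$ with $c\in C(f)_{\wan}$ one has $\Phi_f(f^j(c),a)=-\infty$, and the offending terms have to be replaced by the finite $c_c(f^j)$; $k$-independence then comes from the wandering hypothesis, which bounds the number of such $j$.

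A secondary issue: the passage to the displayed proximity terms $d^{-j}\log(1/[f^j(c),a])$ is not a Lipschitz/distortion argument (which would lose the uniformity in $k$ you need). It is exact once you replace $\Phi_f$ by $\log[\cdot,\cdot]$ at bounded cost $2\sup_{\bP^1}|g_f|$ (folded into $C_f$) and use the Riesz-type identity $\Phi_f(f^j(z),a)=U_{(f^j)^*(a)}(z)$ (Lemma \ref{th:Riesz}) to get $U_{(f^j)^*(a)/d^j}(c)=d^{-j}\Phi_f(f^j(c),a)$ whenever $c\notin f^{-j}(a)$. The same Riesz identity also gives the first part \eqref{eq:non-classical} directly as the exact equality $\cE_f(k,a)=\Phi_f(a,a)/d^k$ for $a\in\sH^1$ (since $f^{-k}(a)\subset\sH^1$ meets no diagonal mass), a cleaner route than the energy-inequality bound you sketch; the local boundedness of $a\mapsto\Phi_f(a,a)$ under $\rho$ then follows from the Gromov product formula $\log\delta_{\can}(a,a)=-\rho(a,\cS_{\can})$ together with boundedness of $g_f$.
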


The constants $C_f$ and $C_{f,a}$ are concretely 
given in Section \ref{eq:proof} below.

\begin{remark}
 The upper estimate of $\cE_f(k,a)$ in (\ref{eq:algebraic})
 improves \cite[Theorem 1.1]{Baker06} and \cite[Propositions 2.8, 4.9]{FR06}
 by the first {\itshape proximity} term of wandering critical orbits 
 to the initial $a\in\bP^1$.
\end{remark}

In Section \ref{sec:facts}, we introduce the notion of
$f$-asymptotic Fekete configuration on $\sP^1$
to sequences of positive measures whose supports consist of finitely many points
in $\sP^1$. Here we mention that
\begin{align*}
E_{\Fekete}(f)
:=&\{a\in\sP^1;((f^k)^*(a))\text{ is not }f\text{-asymptotically Fekete on }\sP^1\}\\
=&E(f)\cup\{a\in\sP^1\setminus E(f);\lim_{k\to\infty}\cE_f(k,a)\neq 0\}
\end{align*}
(see Remark \ref{th:exceptionalfekete} below).
We call $E_{\Fekete}(f)$ the {\itshape non-Fekete locus} of $f$.

\begin{fact}\label{th:classification}
Suppose that $K$ has characteristic $0$.
By the classification of cyclic Fatou components 
(essentially due to Fatou, cf. \cite[Theorem 5.2]{Milnor3rd}) and its non-archimedean counterpart due to Rivera-Letelier (\cite[Th\'eor\`eme de Classification]{Juan03}),
each Berkovich Fatou component $U$ of $f$ is either 
\begin{itemize}
 \item a wandering domain, that is,
       for any distinct $m,n\in\bN\cup\{0\}$, $f^m(U)\cap f^n(U)=\emptyset$, or
 \item a component of (super)attracting or parabolic basin, or 
 \item the other, which we call a {\itshape presingular domain} of $f$. 
\end{itemize}
By definition, for every presingular domain $U$, there are $m\in\bN\cup\{0\}$ and
$p\in\bN$ such that $f^p(f^m(U))=f^m(U)$, and by \cite[Proposition 2.16]{FR09},
the restriction of $f^p$ to $f^m(U)$ is injective.
Historically, a cyclic (Berkovich) Fatou component of period $p$ on which $f^p$ 
is injective was called a singular domain (un domaine singulier) of $f$ 
(cf.\ \cite[\S 28]{Fatou1920deux}). For archimedean $K$,
each singular domain is either a Siegel disk or an Herman ring.
\end{fact}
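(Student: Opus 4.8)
The plan is to reduce Fact \ref{th:classification} to the two cited classification theorems — Fatou's over $\bC$ and Rivera-Letelier's over non-archimedean $K$ — after disposing of the elementary wandering/preperiodic dichotomy. First I would record that $f$, being continuous, open and surjective on $\sP^1$ and preserving $\sF(f)$, maps each Berkovich Fatou component $U$ \emph{onto} a Berkovich Fatou component $f(U)$; and that, since Berkovich Fatou components are by definition the connected components of the open set $\sF(f)$, any two of them are either equal or disjoint. Hence if $U$ is not a wandering domain there are $m<n$ in $\bN\cup\{0\}$ with $f^m(U)\cap f^n(U)\neq\emptyset$, so $f^m(U)=f^n(U)$; putting $V:=f^m(U)$ and $p:=n-m$ gives $f^p(V)=V$, which establishes the existence of the $m$ and $p$ in the second bullet and reduces everything to the classification of a \emph{cyclic} Berkovich Fatou component $V$ of period $p$ together with the induced self-map $g:=f^p|_V\colon V\to V$.

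For the cyclic case I would invoke the classification of cyclic Fatou components: over $\bC$ this is the Fatou--Milnor list \cite[Theorem 5.2]{Milnor3rd} (attracting basin, parabolic basin, Siegel disk, Herman ring), and over non-archimedean $K$ it is Rivera-Letelier's Th\'eor\`eme de Classification \cite{Juan03}. In both settings the dichotomy is governed by the asymptotics of $(g^n)$ on $V$: either the iterates accumulate only on $\partial V$ together with a (super)attracting or rationally indifferent cycle of $f$ lying on $\partial V$, in which case $V$ is a component of a (super)attracting or parabolic basin, or the limiting behaviour of $(g^n)$ is genuinely non-degenerate, and then $g$ is an automorphism of $V$. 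Over $\bC$ the latter alternative forces, after uniformization, $V$ to be a disk or an annulus on which $g$ acts by an irrational rotation, i.e.\ a Siegel disk or an Herman ring; over non-archimedean $K$ it is precisely the defining property of a presingular domain, and the injectivity of $f^p$ on $f^m(U)$ claimed in the second bullet is exactly \cite[Proposition 2.16]{FR09}. Collecting the cases yields the asserted trichotomy, and grouping the Siegel disks and Herman rings over $\bC$ under the classical name of singular domains (cf.\ \cite[\S 28]{Fatou1920deux}) gives the final sentence of the statement.

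The genuinely hard input is the non-archimedean classification itself, i.e.\ ruling out exotic recurrent behaviour of $g$ on $V$ and showing that the non-basin alternative forces $g$ to be a bijection: Rivera-Letelier's proof proceeds by a local analysis of $f$ on Berkovich affinoids and annuli and uses the tree structure of $\sH^1$ to control how $(g^n)$ displaces points of $V$. Reproving this is outside our scope, so we quote it; the archimedean list is classical (Fatou), and the only point at which the non-archimedean picture differs qualitatively from the complex one is that, whereas Sullivan's theorem forbids wandering domains over $\bC$, the wandering alternative is genuinely present over non-archimedean $K$ and must therefore be retained in the trichotomy. All remaining assertions — that $f$ carries components onto components, and the equal-or-disjoint property — are immediate from the openness of the extended $f$ on $\sP^1$ recalled in Section \ref{sec:intro} and the definition of connected component, so no further work is needed.
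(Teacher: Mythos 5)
Your proposal is correct and follows essentially the same route as the paper, which treats this as a citation-level fact: reduce a non-wandering component to an eventually periodic one, then quote the Fatou--Milnor classification over $\bC$ and Rivera-Letelier's Th\'eor\`eme de Classification (with \cite[Proposition 2.16]{FR09} for the injectivity of $f^p$ on the cyclic component) in the non-archimedean case. The elementary reduction and the components-map-onto-components observation you spell out are exactly the implicit bookkeeping the paper leaves to the reader, so nothing further is needed.
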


\begin{definition}
Let $\PS(f)$ be the the union of all presingular domains of $f$. 
\end{definition}
For $K$ having characteristic $0$,
put $C_0:=-\min_{c\in C(f)\cap\cF(f)}\log[c,\cJ(f)]+2\sup_{\bP^1}|g_f|>0$ and
\begin{gather}
E_{\wan}(f)
=:\bigcup_{c\in C(f)_{\wan}\cap(\cJ(f)\cup\PS(f))}
\bigcap_{N\in\bN}\bigcup_{j\ge N}B[f^j(c),\exp(-C_0d^j)],\label{eq:polarset}
\end{gather}
which is 
of finite Hyllengren measure for increasing $(d^j)\subset\bN$, so 
of capacity $0$ (Lemma \ref{th:polar}). 

The estimates of $\cE_f(k,a)$ from below and above in (\ref{eq:algebraic})
are respectively essential in estimating the size of $E_{\Fekete}(f)$
from above and below. 

\begin{mainth}\label{th:characterization}
Let $f$ be a rational function on $\bP^1=\bP^1(K)$ of degree $d>1$,
and suppose that $K$ has characteristic $0$. Then
\begin{gather}
 E_{\Fekete}(f)\setminus E(f)\subset E_{\wan}(f)\setminus E(f).\label{eq:qualitative}
\end{gather} 
Moreover, if $c\in C(f)_{\wan}$ is pre-recurrent $($then this $c$ belongs
to $\cJ(f)\cup\PS(f)$ from Fact $\ref{th:classification})$,
$E_{\Fekete}(f)\cap\omega(c)$ is $G_{\delta}$-dense in $\omega(c)$
under $[\cdot,\cdot]$.
\end{mainth}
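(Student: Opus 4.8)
The plan is to prove Theorem~\ref{th:characterization} in two parts, corresponding to its two assertions.

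\medskip

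\emph{Part I: the inclusion \eqref{eq:qualitative}.} The idea is to show that if $a\in\sP^1\setminus E(f)$ lies outside $E_{\wan}(f)$, then $\cE_f(k,a)\to 0$, so that $((f^k)^*(a))$ is $f$-asymptotically Fekete and $a\notin E_{\Fekete}(f)$. First I would observe that by Fact~\ref{th:cocycle}, for $a\in\bP^1\setminus\SAT(f)$ we have $\sup_j\eta_{a,j}\le d^{2d-2}<\infty$, so the term $\frac{C_f}{d^k}\sum_{j=1}^k\eta_{a,j}=O(kd^{-k})\to 0$; similarly the extra term $\frac{C_{f,a}}{d^k}\to 0$. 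The substantive point is therefore to bound the proximity sums
\begin{gather*}
 \frac{1}{d^k}\sum_{j=1}^k\Bigl(\eta_{a,j}\sum_{c\in C(f)\setminus f^{-j}(a)}\frac{1}{d^j}\log\frac{1}{[f^j(c),a]}\Bigr)
\end{gather*}
appearing in the lower estimate of \eqref{eq:algebraic}, and its analogue without the $\eta_{a,j}$ factor in the upper estimate. For a critical point $c$ that is \emph{not} wandering, the orbit $(f^j(c))$ is finite, so $[f^j(c),a]$ is bounded below away from $0$ as soon as $a$ is not itself on that finite orbit — and the finitely many $a$ lying on critical orbits of non-wandering critical points need separate, easy treatment (they contribute to $E(f)$ or are handled by $C_{f,a}$). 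For $c\in C(f)_{\wan}\cap\cF(f)$ that is wandering but lies in the Fatou set, the orbit converges to an attracting cycle (it cannot lie in a presingular domain and be wandering, and wandering domains are ruled out for rational maps here), so $\log\frac{1}{[f^j(c),a]}$ is $O(1)$ unless $a$ is on that attracting cycle. The genuinely delicate case is $c\in C(f)_{\wan}\cap(\cJ(f)\cup\PS(f))$: here I would argue that if $a\notin E_{\wan}(f)$, then by definition of $E_{\wan}(f)$ in \eqref{eq:polarset} there is $N=N(a)$ with $a\notin B[f^j(c),\exp(-C_0 d^j)]$ for all $j\ge N$, i.e. $\log\frac{1}{[f^j(c),a]}\le C_0 d^j$ for $j\ge N$; plugging this in, $\frac{1}{d^k}\sum_{j=N}^k \eta_{a,j}\cdot d^{-j}\cdot C_0 d^j = \frac{C_0}{d^k}\sum_{j=N}^k\eta_{a,j}=O(kd^{-k})\to 0$ using the $\SAT$-characterization again (and the finitely many $j<N$ give an $O(d^{-k})$ contribution since $c\notin f^{-j}(a)$ makes each term finite). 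Combining the cases, both the upper and lower bounds in \eqref{eq:algebraic} tend to $0$, hence $\cE_f(k,a)\to 0$, which gives \eqref{eq:qualitative}. I should be careful to remember that $E_{\wan}(f)$ only involves the \emph{Julia/presingular} wandering critical points, which is exactly why the Fatou/attracting case had to be disposed of by the boundedness argument above rather than by membership in $E_{\wan}(f)$.

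\medskip

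\emph{Part II: $G_\delta$-density in $\omega(c)$ for pre-recurrent wandering $c$.} Here the strategy is to exhibit, for such $c$, a $G_\delta$ subset of $\omega(c)$ contained in $E_{\Fekete}(f)$ and dense in $\omega(c)$. By Fact~\ref{th:classification} such $c$ lies in $\cJ(f)\cup\PS(f)$, and $c$ being pre-recurrent means the orbit accumulates on $\omega(c)$; since $c$ is pre-recurrent and wandering I would take, for $a\in\omega(c)$, the set
\begin{gather*}
 A:=\bigcap_{N\in\bN}\bigcup_{j\ge N}B[f^j(c),\exp(-C_0 d^j)],
\end{gather*}
which is exactly the $c$-indexed piece of $E_{\wan}(f)$ and is a $G_\delta$ set contained in $\overline{\{f^j(c)\}}\supset\omega(c)$; by the recurrence/pre-recurrence hypothesis the balls $B[f^j(c),\exp(-C_0 d^j)]$ have centers that are eventually dense along the orbit near every point of $\omega(c)$, so $A$ meets every chordal ball centered at a point of $\omega(c)$, i.e. $A$ is dense in $\omega(c)$. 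The heart of the matter is then to show $A\cap\omega(c)\subset E_{\Fekete}(f)$, i.e. that for $a\in A$ (and $a\notin E(f)$, which holds since $a$ is an accumulation point of an infinite orbit in the Julia set or a presingular domain, hence non-exceptional) one has $\limsup_{k\to\infty}|\cE_f(k,a)|>0$. For this I would use the \emph{lower} estimate in \eqref{eq:algebraic}: if $a\in A$ then along a subsequence $j_m$ with $[f^{j_m}(c),a]\le\exp(-C_0 d^{j_m})$ (and $a\neq f^{j_m}(c)$, handling the degenerate coincidences separately, or noting $a$ could equal one such point in which case one adjusts to the next index), the single term with $j=j_m$ in $\sum_{j=1}^{k}\frac{1}{d^j}\log\frac{1}{[f^j(c),a]}$ is at least $\frac{1}{d^{j_m}}\cdot C_0 d^{j_m}(\deg_c f-1)=C_0(\deg_c f-1)$, a fixed positive constant; choosing $k=j_m$ this forces $\cE_f(k,a)\le -(\text{positive constant})+\frac{C_f}{d^k}\sum\eta_{a,j}+\frac{C_{f,a}}{d^k}$, and since $a\notin\SAT(f)$ the error terms vanish as $m\to\infty$, so $\limsup_k|\cE_f(k,a)|\ge C_0(\deg_c f-1)>0$. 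Hence $a\in E_{\Fekete}(f)$, completing Part II.

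\medskip

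The main obstacle I anticipate is the bookkeeping in Part II around points $a$ that lie \emph{on} the critical orbit $(f^j(c))$: for such $a$ the term $c\in f^{-j}(a)$ is excluded from the sum in \eqref{eq:algebraic}, and one must either shift to nearby indices $j$ for which $f^j(c)\neq a$ but $[f^j(c),a]$ is still super-exponentially small (available by pre-recurrence, since the orbit returns near $a$ infinitely often from genuinely different points), or argue that such $a$ form a countable set that one can afford to excise while keeping $G_\delta$-density — the latter is cleaner and I would pursue it, intersecting $A$ with the complement of the (countable, hence meager, but we need it co-dense, so rather: the orbit is infinite so removing it still leaves a dense $G_\delta$) orbit of $c$. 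A secondary technical point is verifying the claim ``$a\notin E(f)$'' for $a\in\omega(c)\cap A$: since $E(f)$ is finite and $\omega(c)$ is infinite (as $c$ is wandering), $E(f)\cap\omega(c)$ is finite and can be removed without affecting density, so this is not a real obstruction. Everything else is a direct application of Theorem~\ref{th:algebraic} together with Facts~\ref{th:cocycle} and~\ref{th:classification} and the definition \eqref{eq:polarset}.
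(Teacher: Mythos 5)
Part I of your plan follows the paper's line of argument closely: use the upper and lower bounds in \eqref{eq:algebraic}, control the proximity sum by $\log(1/[f^j(c),a])\le C_0 d^j$ for all large $j$ (which is what $a\notin E_{\wan}(f)$ gives), and use Fact \ref{th:cocycle} to kill the $\eta$-error terms. One caution: your parenthetical ``it cannot lie in a presingular domain and be wandering, and wandering domains are ruled out'' is false over general $K$ --- a critical point in (a preimage of) a Siegel disk or Herman ring can perfectly well be wandering, and non-archimedean rational maps can have wandering Fatou components. You implicitly handle $c\in C(f)_{\wan}\cap\PS(f)$ anyway in the ``genuinely delicate case,'' so this is only a misstatement, but it should be removed. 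The part of the Fatou-basin case that does need an argument is $a$ on the attracting cycle itself, where the paper gives explicit sub-$d^j$ rates ($[f^j(c),a]\gtrsim\exp(-d_a^j)$ with $d_a<d$, or $\gtrsim\lambda^j$); ``$O(1)$'' is wrong there, but the rate is still negligible after dividing by $d^j$.

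Part II has a genuine quantitative gap. You take $A$ to be the $c$-piece of $E_{\wan}(f)$, with balls of radius $\exp(-C_0 d^j)$, and claim that the $j=j_m$ summand gives a ``fixed positive constant.'' But you have dropped the outer factor $d^{-k}$ in \eqref{eq:algebraic}. With $k=j_m$ and $[f^{j_m}(c),a]\le\exp(-C_0 d^{j_m})$, the $j=j_m$ contribution to the right-hand side of the upper estimate is
\begin{gather*}
-\frac{1}{d^{k}}\cdot\frac{1}{d^{j_m}}\log\frac{1}{[f^{j_m}(c),a]}\cdot(\deg_c f-1)\ \ge\ -\frac{C_0(\deg_c f-1)}{d^{j_m}},
\end{gather*}
which tends to $0$, and is in fact dominated by the positive error term $\frac{C_f}{d^k}\sum_{j\le k}\eta_{a,j}=O(kd^{-k})$. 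So your bound only gives $\cE_f(j_m,a)\le o(1)$, which is consistent with $\cE_f(k,a)\to 0$ and does not put $a$ in $E_{\Fekete}(f)$. The fix, which is what the paper does, is to build the $G_\delta$ set with a much smaller radius, e.g.\ $B_c:=\bigcap_N\bigcup_{k\ge N}(B[f^k(c),\exp(-d^{3k})]\cap\omega(c))$: the same Baire-category argument from pre-recurrence gives $G_\delta$-density of $B_c$ in $\omega(c)$, and now for $a\in B_c$ the $j=k$ term alone contributes $\le -d^{-k}\cdot d^{2k}=-d^k$ for infinitely many $k$, so $\cE_f(k,a)\le -d^k+O(kd^{-k})\to-\infty$ along that subsequence. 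The radius in $E_{\wan}(f)$ is exactly the threshold where the proximity effect becomes negligible; to see a genuinely non-Fekete effect you must go strictly below it.

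Your remarks about excising the orbit of $c$ and the finite set $E(f)$ from the $G_\delta$-dense set are fine, and your handling of these degenerate coincidences is consistent with the paper (which simply notes that a wandering $c$ can have $f^k(c)=a$ for at most one $k$).
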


Let us come back to estimating (\ref{eq:errorequidist}). 
The following is a version of Favre and Rivera-Letelier \cite[Th\'eor\`eme 7]{FR06}.

\begin{proposition}\label{th:cauchy}
Let $f$ be a rational function on $\bP^1=\bP^1(K)$ of degree $d>1$. 
 Then for every $a\in\sH^1$, every $C^1$-test function $\phi$ on $\sP^1$ and every $k\in\bN$,
\begin{gather}
 \left|\int_{\sP^1}\phi\rd\left(\frac{(f^k)^*(a)}{d^k}-\mu_f\right)\right|
\le\langle\phi,\phi\rangle^{1/2}\sqrt{|\cE_f(k,a)|}.\label{eq:cauchyhyp}
\end{gather}
 Moreover, there is $C_{\FRL}>0$ such that
 for every $a\in\bP^1$, every $C^1$-test function $\phi$ on $\sP^1$ and every $k\in\bN$, 
\begin{multline}
 \left|\int_{\sP^1}\phi\rd\left(\frac{(f^k)^*(a)}{d^k}-\mu_f\right)\right|\\
\le C_{\FRL}\max\{\Lip(\phi),
 \langle\phi,\phi\rangle^{1/2}\}\sqrt{|\cE_f(k,a)|+kd^{-2k}D_{a,k}}.\label{eq:cauchy}
\end{multline}
Here $\Lip(\phi)$ is the Lipschitz constant 
of the restriction of $\phi$ to $\bP^1$ under $[\cdot,\cdot]$,
and $\langle\phi,\phi\rangle^{1/2}$ is the
Dirichlet norm of $\phi$.
\end{proposition}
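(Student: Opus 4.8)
The plan is to realize the error term $\int_{\sP^1}\phi\,\rd((f^k)^*(a)/d^k-\mu_f)$ as a pairing of the test function $\phi$ against the signed measure $\nu_{a,k}:=(f^k)^*(a)/d^k-\mu_f$, whose total mass is $0$, and then to bound this pairing by a Cauchy--Schwarz inequality in the energy (Dirichlet) pairing. Concretely, I would first recall from Section \ref{sec:facts} the identity expressing the energy of $\nu_{a,k}$ in terms of the $f$-kernel $\Phi_f$: since $\mu_f$ is the equilibrium measure and $(f^k)^*\mu_f=d^k\mu_f$, the self-energy $-\int_{\sP^1\times\sP^1\setminus\diag_{\bP^1}}\Phi_f\,\rd(\nu_{a,k}\times\nu_{a,k})$ collapses, after expanding the square and using $(\mu_f,\mu_f)_f=0$ and $(\mu_f,(f^k)^*(a)/d^k)_f=0$ (the latter because $g_f$ is the potential of $\mu_f$ and $(f^k)^*(a)/d^k$ has the right total mass), to precisely $\cE_f(k,a)$ as defined in \eqref{eq:errorFekete} — modulo the diagonal contribution $D_{a,k}$, which is finite on $\sH^1$ and contributes the extra $kd^{-2k}D_{a,k}$ term in the non-classical case. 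For $a\in\sH^1$ all masses sit away from $\diag_{\bP^1}$ (or the potential stays finite), so $\cE_f(k,a)\ge 0$ and represents the energy $\|\nu_{a,k}\|_f^2$ honestly; this gives \eqref{eq:cauchyhyp} directly from $|\langle\phi,\nu_{a,k}\rangle|\le\langle\phi,\phi\rangle^{1/2}\|\nu_{a,k}\|_f$, the abstract Cauchy--Schwarz for the (positive semidefinite) energy pairing on mass-zero measures, which is standard potential theory on $\sP^1$ (cf.\ \cite[\S10]{BR10}).

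For the second, uniform estimate \eqref{eq:cauchy} valid for all $a\in\bP^1$, the obstruction is exactly the diagonal: when $a$ is a classical point, $(f^k)^*(a)$ is an atomic measure and its self-energy $-((f^k)^*(a),(f^k)^*(a))_f$ diverges along $\diag_{\bP^1}$, so $\cE_f(k,a)$ as literally written is $-\infty$ and one cannot pair $\phi$ against $\nu_{a,k}$ in the energy space. The fix, following Favre--Rivera-Letelier's \cite[Th\'eor\`eme 7]{FR06}, is to regularize: replace each atom $(w)$ appearing in $(f^k)^*(a)/d^k$ by the uniform probability measure $\mu_{B(w,r_k)}$ on a small ball of a well-chosen radius $r_k$, obtaining a measure $\tilde\nu_{a,k}$ in the energy space. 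The error between pairing $\phi$ against $\nu_{a,k}$ and against $\tilde\nu_{a,k}$ is controlled by $\Lip(\phi)$ times the total displacement, which after summing over the $w\in f^{-k}(a)$ with weights $\deg_w(f^k)$ gives a term of order $(\text{something})\cdot\Lip(\phi)$; and the energy of $\tilde\nu_{a,k}$ exceeds $|\cE_f(k,a)|$ by the self-energy of the smoothing, which is $\sum_w(\deg_w(f^k))^2\cdot(-\log r_k)/d^{2k}=D_{a,k}(-\log r_k)/d^{2k}$ up to bounded terms. Optimizing $r_k$ — roughly $r_k\asymp d^{-k}$ or a fixed power thereof so that $-\log r_k\asymp k$ — balances the two contributions and produces the claimed $\sqrt{|\cE_f(k,a)|+kd^{-2k}D_{a,k}}$ inside the square root, with the prefactor $C_{\FRL}\max\{\Lip(\phi),\langle\phi,\phi\rangle^{1/2}\}$ absorbing both the Lipschitz displacement error and the Cauchy--Schwarz constant; here $C_{\FRL}$ depends only on $f$ through $\sup_{\sP^1}|g_f|$ and the geometry of $\Phi_f$ near $\diag_{\bP^1}$.

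The main obstacle I anticipate is making the regularization argument of the previous paragraph uniform in $a\in\bP^1$ and in $k$ simultaneously, i.e.\ choosing the smoothing radius and verifying that the displacement error and the smoothing self-energy both admit bounds with constants depending only on $f$ (not on $a$, not on $k$, not on the fiber $f^{-k}(a)$). This requires (i) a uniform-in-$(\cS,\cS')$ estimate of $\Phi_f(\cS,\cS')+\log\delta_{\can}(\cS,\cS')$ — which holds since $g_f$ is bounded on $\sP^1$ — so that the smoothing self-energy is $\Phi_f$-energy plus $O(1)$; (ii) the observation that $\sum_{w\in f^{-k}(a)}(\deg_w f^k)^2=D_{a,k}$ and $\sum_{w}\deg_w f^k=d^k$ are the only fiber-dependent quantities that appear, and both are already packaged into the statement ($D_{a,k}$ explicitly, $d^k$ via normalization); and (iii) that when we smooth on balls of radius $r_k$ the resulting measures stay probability measures and the pairing against a $C^1$ function changes by at most $\Lip(\phi)\cdot r_k$ per unit mass. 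Once these three uniform bounds are in hand the optimization over $r_k$ is a one-line calculus exercise and the proposition follows; I would present (i)–(iii) as short lemmas or inline estimates citing Section \ref{sec:facts} and \cite{BR10}, and then assemble \eqref{eq:cauchy} in a couple of lines, with \eqref{eq:cauchyhyp} recorded first as the degenerate ($r_k\to 0$, no smoothing needed) case.
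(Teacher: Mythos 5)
The proposal captures the two main conceptual steps the paper uses (Cauchy--Schwarz for the energy pairing, then $\epsilon$-regularization of atoms with radius $\epsilon\asymp d^{-k/\kappa}$), and the balance ``displacement error $\Lip(\phi)\epsilon$ vs.\ smoothing self-energy $(-\log\epsilon)d^{-2k}D_{a,k}$'' is essentially what happens in the paper's Theorem~\ref{th:regularization}. However, there is a genuine gap: you work throughout with the $f$-kernel $\Phi_f$ and cite BR10 for an ``abstract Cauchy--Schwarz on $\sP^1$,'' but the regularization machinery the paper invokes (\cite[(32),(33), Lemmes 2.7, 4.8, Propositions 2.9, 2.10, 4.10, 4.11]{FR06}) is formulated for the \emph{planar} bilinear form $(\cdot,\cdot)_\infty$ built from $\log\delta_\infty$, which is singular at $\infty$, not for $(\cdot,\cdot)_f$. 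The paper therefore has to introduce the discrepancy
\begin{gather*}
m_k(a)=\Bigl(\tfrac{(f^k)^*(a)}{d^k}-\mu_f,\ \tfrac{(f^k)^*(a)}{d^k}-\mu_f\Bigr)_\infty+\cE_f(k,a)
=2\int_{\diag_{\bP^1}}\phi_\infty\,\rd\Bigl(\tfrac{(f^k)^*(a)}{d^k}\times\tfrac{(f^k)^*(a)}{d^k}\Bigr),
\end{gather*}
where $\phi_\infty=g_f-\log\delta_{\can}(\cdot,\infty)$ blows up at $\infty$, and must control $m_k(a)$ uniformly. Since $f^{-k}(a)$ can accumulate on $\infty$, this requires Lemma~\ref{th:compare} (Koebe-type distortion near a periodic point at $\infty$) and then a two-chart covering argument: pick two distinct periodic cycles, conjugate $f$ by chordal isometries $h_1,h_2$ moving each cycle to $\infty$, so that every $a\in\bP^1$ lies in at least one of the good regions $\bP^1\setminus U_i$, and invoke Lemma~\ref{th:natural} to transport the estimates back. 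This covering-and-conjugation step is the heart of the uniformity in $a$ and is entirely absent from your plan; without it (or an alternative ``spherical'' regularization built directly for $\delta_{\can}$, which would require re-deriving the estimates your step (ii) needs), your item (ii) on uniformity cannot be closed.

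Two smaller points. First, for $a\in\sH^1$ the sign is $\cE_f(k,a)\le 0$, not $\ge 0$: by Lemma~\ref{th:reduction}, $\cE_f(k,a)=-(\nu_{a,k},\nu_{a,k})_f$ and the energy of a mass-zero measure in the energy space is nonnegative, so $\cE_f(k,a)=\Phi_f(a,a)/d^k\le 0$ (the statement uses $|\cE_f(k,a)|$, so this is harmless for the conclusion but should be corrected in the exposition). Second, you refer to $a\in\bP^1$ as ``the non-classical case''; in the paper's terminology $\bP^1$ consists of the \emph{classical} (type~I) points and $\sH^1$ of the non-classical ones.
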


\begin{remark}\label{th:lipschitz}
 For the $C^1$-regularity of test functions on $\sP^1$ in non-archimedean $K$ case,
 see Section \ref{sec:facts}. The dependence of $C_{\FRL}$ on $f$
 will be seen concretely in Section \ref{sec:error}. 
 If $K$ is archimedean ($\cong\bC$), then
 each $C^1$-test function $\phi$ satisfies
 $\langle\phi,\phi\rangle^{1/2}\le\Lip(\phi)$, 
 and any Lipschitz continuous test function on $\bP^1$
 under $[\cdot,\cdot]$ is approximated by $C^1$-test functions on $\bP^1$
 in the Lipschitz norm. Hence the estimate (\ref{eq:cauchy}) 
 extends to every Lipschitz continuous test function $\phi$ on $\bP^1$ under $[\cdot,\cdot]$
 (and every $a\in\bP^1$ and every $k\in\bN$) as
\begin{gather*}
 \left|\int_{\sP^1}\phi\rd\left(\frac{(f^k)^*(a)}{d^k}-\mu_f\right)\right|
 \le C_{\FRL}\Lip(\phi)\sqrt{|\cE_f(k,a)|+kd^{-2k}D_{a,k}}.
\end{gather*}
 As a consequence, (\ref{eq:general}) and (\ref{eq:derived}) below also extend similarly.
\end{remark}

The first principal estimate of (\ref{eq:errorequidist}) is

\begin{mainth}\label{th:order}
 Let $f$ be a rational function on $\bP^1=\bP^1(K)$ of degree $d>1$. 
 Then for every $a\in\sH^1$, every $C^1$-test function $\phi$ on $\sP^1$ 
 and every $k\in\bN$,
 \begin{gather*}
 \left|\int_{\sP^1}\phi\rd\left(\frac{(f^k)^*(a)}{d^k}-\mu_f\right)\right|
 \le \langle\phi,\phi\rangle^{1/2}\sqrt{\Phi_f(a,a)d^{-k}},
 \end{gather*}
 and $a\mapsto\Phi_f(a,a)$ is locally bounded on $\sH^1$ under $\rho$.
 If in addition $K$ has characteristic $0$, then 
 for every $a\in\bP^1$,
 every $C^1$-test function $\phi$ on $\sP^1$ and every $k\in\bN$,
\begin{multline}
 \left|\int_{\sP^1}\phi\rd\left(\frac{(f^k)^*(a)}{d^k}-\mu_f\right)\right|
 \le C_{\FRL}\max\{\Lip(\phi),\langle\phi,\phi\rangle^{1/2}\}\times\\
 \sqrt{(2d-2)\left(\max_{j\in\{1,\ldots,k\}}\max_{c\in C(f)\setminus f^{-j}(a)}\frac{1}{d^j}\log\frac{1}{[f^j(c),a]}\right)+C_f+C_{f,a}+1}\\
\times\sqrt{\max\left\{\frac{1}{d^k}\sum_{j=1}^k\eta_{a,j},k\frac{D_{a,k}}{d^{2k}}\right\}}.\label{eq:general}
\end{multline}
Here the constants $C_f$ and $C_{f,a}$ appear in Theorem $\ref{th:algebraic}$,
and the sum over $C(f)\setminus f^{-j}(a)$
takes into account the multiplicity $(\deg_c f-1)$ of each $c$.
\end{mainth}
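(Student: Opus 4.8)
The plan is to deduce Theorem \ref{th:order} directly from Proposition \ref{th:cauchy} together with the two-sided estimate of $\cE_f(k,a)$ in Theorem \ref{th:algebraic}, so that the whole proof is a matter of bookkeeping. For $a\in\sH^1$ one simply substitutes the bound $|\cE_f(k,a)|\le\Phi_f(a,a)d^{-k}$ of (\ref{eq:non-classical}) into (\ref{eq:cauchyhyp}); this gives the first displayed inequality, and the local boundedness of $a\mapsto\Phi_f(a,a)$ on $\sH^1$ under $\rho$ is already recorded in Theorem \ref{th:algebraic}. From now on assume $K$ has characteristic $0$ and $a\in\bP^1$, and abbreviate $M_k:=\max_{1\le j\le k}\max_{c\in C(f)\setminus f^{-j}(a)}d^{-j}\log\bigl(1/[f^j(c),a]\bigr)$, with the convention that the maximum over an empty index set is $0$; since $[f^j(c),a]\le 1$ we have $M_k\ge 0$.

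Starting from (\ref{eq:cauchy}), it suffices to establish the scalar inequality
\begin{gather*}
 |\cE_f(k,a)|+k\frac{D_{a,k}}{d^{2k}}\le\bigl((2d-2)M_k+C_f+C_{f,a}+1\bigr)\cdot\max\Bigl\{\frac{1}{d^k}\sum_{j=1}^k\eta_{a,j},\ k\frac{D_{a,k}}{d^{2k}}\Bigr\},
\end{gather*}
for then taking square roots and multiplying by $C_{\FRL}\max\{\Lip(\phi),\langle\phi,\phi\rangle^{1/2}\}$ yields (\ref{eq:general}). The left-hand summand $kD_{a,k}d^{-2k}$ is one of the two quantities inside the maximum, hence is bounded by $1$ times the last factor; this accounts for the ``$+1$'' in the middle factor.

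It remains to bound $|\cE_f(k,a)|$, and for this one must use \emph{both} inequalities of (\ref{eq:algebraic}). As each summand $d^{-j}\log(1/[f^j(c),a])$ is nonnegative, the proximity sum in (\ref{eq:algebraic}) is $\le 0$; hence its upper bound gives $\cE_f(k,a)\le d^{-k}C_f\sum_{j=1}^k\eta_{a,j}+d^{-k}C_{f,a}$, and its lower bound gives $\cE_f(k,a)\ge-d^{-k}\sum_{j=1}^k\eta_{a,j}\bigl(\sum_{c\in C(f)\setminus f^{-j}(a)}d^{-j}\log(1/[f^j(c),a])\bigr)-d^{-k}C_f\sum_{j=1}^k\eta_{a,j}-d^{-k}C_{f,a}$. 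By the Riemann--Hurwitz formula there are at most $2d-2$ critical points counted with multiplicity, so $\sum_{c\in C(f)\setminus f^{-j}(a)}d^{-j}\log(1/[f^j(c),a])\le(2d-2)M_k$ for each $j$; distinguishing the two cases according to the sign of $\cE_f(k,a)$ then gives
\begin{gather*}
 |\cE_f(k,a)|\le\frac{(2d-2)M_k+C_f}{d^k}\sum_{j=1}^k\eta_{a,j}+\frac{C_{f,a}}{d^k}.
\end{gather*}
Finally $\eta_{a,j}\ge 1$ forces $d^{-k}\sum_{j=1}^k\eta_{a,j}\ge kd^{-k}\ge d^{-k}$, so the first term on the right is at most $\bigl((2d-2)M_k+C_f\bigr)$ times the maximum in the scalar inequality, the second term is at most $C_{f,a}$ times it, and $kD_{a,k}d^{-2k}$ is at most $1$ times it; summing proves the scalar inequality. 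There is no real obstacle here: the only point worth noting is that it is the $\eta_{a,j}$-weight present in the lower bound of (\ref{eq:algebraic}) but absent from the upper bound that forces $|\cE_f(k,a)|$, and hence the final estimate, to carry $d^{-k}\sum_{j=1}^k\eta_{a,j}$ rather than the smaller quantity $kd^{-k}$.
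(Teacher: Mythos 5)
Your proposal is correct and follows essentially the same route as the paper: both arguments substitute (\ref{eq:non-classical}) into (\ref{eq:cauchyhyp}) for $a\in\sH^1$, and for $a\in\bP^1$ both pass from (\ref{eq:algebraic}) to the bound $|\cE_f(k,a)|\le\bigl((2d-2)M_k+C_f+C_{f,a}\bigr)d^{-k}\sum_{j=1}^k\eta_{a,j}$ (the paper isolates this as the intermediate inequality (\ref{eq:absolutevalue}) in the proof of Theorem \ref{th:characterization}), then feed it into (\ref{eq:cauchy}) and absorb the extra $kD_{a,k}d^{-2k}$ term into the ``$+1$''. Your only cosmetic deviation is to carry $C_{f,a}/d^{k}$ as a separate summand until the very end rather than absorbing it immediately via $\eta_{a,j}\ge 1$ as the paper does; the bookkeeping and the final constant are identical.
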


Recall the definition (\ref{eq:polarset}) of $E_{\wan}(f)$,
which is of capacity $0$. The second principal estimate of (\ref{eq:errorequidist}) is

\begin{mainth}\label{th:derived}
 Let $f$ be a rational function on $\bP^1=\bP^1(K)$ of degree
 $d>1$, and suppose that $K$ has characteristic $0$. Then 
 for every $a\in\bP^1\setminus E_{\wan}(f)$, there is $C>0$ such that 
 every $C^1$-test function $\phi$ on $\sP^1$ and every $k\in\bN$,
\begin{multline}
 \left|\int_{\sP^1}\phi\rd\left(\frac{(f^k)^*(a)}{d^k}-\mu_f\right)\right|\\
 \le C\max\{\Lip(\phi),\langle\phi,\phi\rangle^{1/2}\}
\sqrt{\max\left\{\frac{1}{d^k}\sum_{j=1}^k\eta_{a,j},k\frac{D_{a,k}}{d^{2k}}\right\}}
\label{eq:derived}.
\end{multline}
 Furthermore, for every $a_0\in\bP^1\setminus\PC(f)$ and every $r_0\in(0,[a_0,\PC(f)])$, 
there is $C'>0$ such that
 for every $C^1$-test function $\phi$ on $\sP^1$ and every $k\in\bN$,
\begin{gather*}
 \sup_{a\in B[a_0,r_0]}\left|\int_{\sP^1}\phi\rd\left(\frac{(f^k)^*(a)}{d^k}-\mu_f\right)\right|
 \le C'\max\{\Lip(\phi),\langle\phi,\phi\rangle^{1/2}\}\sqrt{kd^{-k}}.
\end{gather*}
\end{mainth}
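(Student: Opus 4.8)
The plan is to read off both displays from the estimate \eqref{eq:general} of Theorem~\ref{th:order}, by checking that under the respective hypotheses the first square-root factor there is bounded independently of $k$. Since $[\cdot,\cdot]\le 1$, each summand $\frac{1}{d^j}\log\frac{1}{[f^j(c),a]}$ is $\ge 0$, so putting $\Gamma(a):=\sup_{j\ge 1}\max_{c\in C(f)\setminus f^{-j}(a)}\frac{1}{d^j}\log\frac{1}{[f^j(c),a]}\in[0,+\infty]$ we have $\max_{j\in\{1,\dots,k\}}\max_{c\in C(f)\setminus f^{-j}(a)}\frac{1}{d^j}\log\frac{1}{[f^j(c),a]}\le\Gamma(a)$ for every $k$; since moreover $C_f$ and $C_{f,a}$ are constants independent of $k$ by Theorem~\ref{th:algebraic}, once we know $\Gamma(a)<\infty$ the inequality \eqref{eq:general} immediately yields \eqref{eq:derived} with $C:=C_{\FRL}\sqrt{(2d-2)\Gamma(a)+C_f+C_{f,a}+1}$, a constant depending on $a$.

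Thus the substance of the first assertion is the claim that $\Gamma(a)<\infty$ whenever $a\in\bP^1\setminus E_{\wan}(f)$. I would prove this by splitting the finite set $C(f)$ (with $\#C(f)\le 2d-2$ counted with multiplicity, as $K$ has characteristic $0$) into three kinds. If $c$ is preperiodic, then $S_c:=\{f^j(c);j\in\bN\}$ is finite, and for those $j$ with $f^j(c)\ne a$ (the rest being dropped from the maximum) one has $[f^j(c),a]\ge\delta_c:=\min\{[v,a];v\in S_c\setminus\{a\}\}>0$, so the contribution of $c$ to $\Gamma(a)$ is at most $\frac{1}{d}\log\frac{1}{\delta_c}<\infty$. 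If $c\in C(f)_{\wan}\cap(\cJ(f)\cup\PS(f))$, then the hypothesis $a\notin E_{\wan}(f)$ is, by \eqref{eq:polarset}, exactly the statement that $a\notin\bigcap_{N}\bigcup_{j\ge N}B[f^j(c),\exp(-C_0d^j)]$, i.e.\ there is $N_c$ with $[f^j(c),a]\ge\exp(-C_0d^j)$ for all $j\ge N_c$, hence $\frac{1}{d^j}\log\frac{1}{[f^j(c),a]}\le C_0$ for $j\ge N_c$, the finitely many remaining $j$ again contributing a finite amount. If $c\in C(f)_{\wan}\cap(\cF(f)\setminus\PS(f))$, then by Fact~\ref{th:classification} the Fatou component of $c$ is a component of a (super)attracting or parabolic basin or a wandering domain, and in each case $(f^j(c))$ accumulates only on the associated attracting or parabolic cycle, or on $\cJ(f)$, with convergence no faster than exponential in $d^j$: geometrically toward an attracting cycle, with superattracting rate $\asymp\rho^{m^{j/p}}$ whose period-$p$ local degree satisfies $m^{1/p}\le d$; polynomially toward a parabolic cycle; and, along a wandering domain, controlled by the diameters of the components $f^j(V)$, which are eventually mapped injectively and hence are pairwise disjoint with diameters tending to $0$. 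Combined with the boundedness of $g_f$ and the precise value of $C_0$ this gives $\frac{1}{d^j}\log\frac{1}{[f^j(c),a]}=O(1)$ for the third kind as well, so $\Gamma(a)<\infty$.

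For the second display, set $\epsilon_0:=[a_0,\PC(f)]-r_0>0$. Every postcritical point $f^j(c)$ (for $c\in C(f)$, $j\in\bN$) lies in $\PC(f)$, so any $a\in B[a_0,r_0]$ satisfies $[f^j(c),a]\ge[a_0,\PC(f)]-[a_0,a]>\epsilon_0$, whence $\max_{j}\max_{c}\frac{1}{d^j}\log\frac{1}{[f^j(c),a]}\le\frac{1}{d}\log\frac{1}{\epsilon_0}$ uniformly in $a\in B[a_0,r_0]$ and in $k$. Moreover $\PC(f)$ contains $\{f^k(c);c\in C(f),k\in\bN\}$, hence contains $E(f)\cup\SAT(f)\cup\CO(f)_{\wan}$ (since $K$ has characteristic $0$, every point of $E(f)$ or $\SAT(f)$ is in the forward orbit of a critical point, by the chain rule for local degrees and $C(f)=\{f'=0\}$), so $a\notin\PC(f)$ forces $C_{f,a}=0$ (Theorem~\ref{th:algebraic}), $\sup_j\eta_{a,j}\le d^{2d-2}$ (Fact~\ref{th:cocycle}), and also $a\notin E(f)$ so that the equidistribution itself holds; consequently $\frac{1}{d^k}\sum_{j=1}^k\eta_{a,j}\le kd^{2d-2}d^{-k}$ and, using $D_{a,k}\le d^k\eta_{a,k}$, $kD_{a,k}d^{-2k}\le kd^{2d-2}d^{-k}$, so the last square root in \eqref{eq:general} is $\le d^{d-1}\sqrt{kd^{-k}}$. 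Inserting these three uniform bounds into \eqref{eq:general} and taking the supremum over $a\in B[a_0,r_0]$ gives the claim with $C':=C_{\FRL}d^{d-1}\sqrt{(2d-2)d^{-1}\log(1/\epsilon_0)+C_f+1}$.

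The hard part will be the third kind in the proof of $\Gamma(a)<\infty$, namely bounding how fast a wandering critical orbit lying in the Fatou set — in particular inside a wandering domain, when such domains exist over $K$ — can approach an arbitrary $a$: this is precisely what the constant $C_0$, with its term $2\sup_{\bP^1}|g_f|$ and the distances $[c,\cJ(f)]$ of the Fatou critical points to the Julia set, is designed to absorb, and turning that into a clean inequality via the structure of Berkovich Fatou components and the boundedness of the dynamical Green function is where the care is needed; everything else is bookkeeping around \eqref{eq:general}.
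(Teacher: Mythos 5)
Your overall plan — reduce everything to \eqref{eq:general} and bound the first square‐root factor by showing the supremum $\Gamma(a)$ is finite — is exactly what the paper does (it packages this as \eqref{eq:upperestimate} inside the proof of Theorem~\ref{th:characterization} and then simply cites it), and your treatment of preperiodic critical points, of wandering $c\in\cJ(f)\cup\PS(f)$ via the definition of $E_{\wan}(f)$, and of the second display via the positive distance from $B[a_0,r_0]$ to $\PC(f)$ are all correct. The gap is in your third kind of critical point, the wandering $c\in C(f)_{\wan}\cap(\cF(f)\setminus\PS(f))$, in the subcase where $c$ lies in a wandering Fatou component and $a\in\omega(c)\subset\cJ(f)$. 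The appeal to ``the diameters of $f^j(V)$ tending to $0$'' gives no lower bound on $[f^j(c),a]$: shrinking pairwise‐disjoint components confine $f^j(c)$ near $\cJ(f)$ but say nothing about how fast a subsequence can approach a fixed Julia point $a$, and the local dynamics at $a$ (which is a priori neither periodic nor recurrent) gives no handle either. Writing ``combined with the boundedness of $g_f$ and the precise value of $C_0$'' names the right ingredients but does not supply the step that actually produces the inequality.

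What closes this case is a potential-theoretic identity rather than a convergence-rate estimate. For $a\in\cJ(f)$ and any $c\in C(f)\cap\cF(f)$, Lemma~\ref{th:Riesz} applied to $f^j$ gives $\frac{1}{d^j}\Phi_f(f^j(c),a)=U_{(f^j)^*(a)/d^j}(c)\ge\min_{w\in f^{-j}(a)}\Phi_f(c,w)\ge\inf_{w\in\cJ(f)}\Phi_f(c,w)$, and since $c\in\cF(f)$ this infimum is $\ge\log[c,\cJ(f)]-2\sup_{\bP^1}|g_f|\ge-C_0$ uniformly in $j$; translating back from $\Phi_f$ to $\log[\cdot,\cdot]$ via \eqref{eq:comparisoncanonical} yields $\log[f^j(c),a]\gtrsim-C_0d^j$ for all $j$, with no case analysis on the component containing $c$. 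This is precisely the content of the paper's inclusions \eqref{eq:accumulateFatou}--\eqref{eq:accumulateJulia} and the equality $E_{\wan}(f)=E_{\wan}(f)'$ \eqref{eq:accumulateagree}, which are organized by the location of $a$ (Julia vs.\ Fatou) rather than by the kind of $c$, and which together give \eqref{eq:upperestimate}. (A small further remark on your second display: for non-archimedean $K$ the strong triangle inequality gives $\inf_{a\in B[a_0,r_0]}[a,\PC(f)]\ge r_0$, not $[a_0,\PC(f)]-r_0$; only the positivity matters, so this is cosmetic.)
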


\begin{remark}\label{th:better}
 Suppose that $K$ has characteristic $0$. For every $a\in\bP^1$ and every $k\in\bN$,
\begin{gather}
 \max\left\{\frac{1}{d^k}\sum_{j=1}^k\eta_{a,j},k\frac{D_{a,k}}{d^{2k}}\right\}
\le kd^{-k}\eta_{a,k},\label{eq:generic}
\end{gather}
 which is asymptotically optimal as $k\to\infty$ 
 if $a\in\bP^1\setminus\SAT(f)$. By (\ref{eq:non-exceptional}), we also 
 have $\sup_{k\in\bN}\eta_{a,k}\le d^{2d-2}$ if $a\in\bP^1\setminus\SAT(f)$.
 If $a\in\SAT(f)\setminus E(f)$, we have a better order estimate
 $O(d^{-k}\eta_{a,k})$ of the left hand side as $k\to\infty$.
 Note that $(\SAT(f)\setminus E(f))\cap E_{\wan}(f)=\emptyset$, and
 from $\#\SAT(f)<\infty$, that $E_{\wan}(f)\cup\SAT(f)$ is still of capacity $0$.
\end{remark}

Let us recover the {\itshape arithmetic} quantitative equidistribution theorem
in a {\itshape purely local} manner:
here, let $k$ be a number field or a function field 
with a place $v$, and $\overline{k}$ the algebraic closure of $k$.
Under the {\itshape arithmetic setting}, that is,
\begin{itemize}
 \item setting $K=\bC_v$ and 
 \item assuming that $f$ has its coefficients in $k$,
\end{itemize}
the dynamical Diophantine approximation due to Silverman \cite[Theorem E]{Silverman93} and
Szpiro and Tucker \cite[Proposition 4.3]{ST05} asserts that
for every $a\in\bP^1(\overline{k})\setminus E(f)$ and every wandering $z\in\bP^1(\overline{k})$,
\begin{gather}
 \lim_{n\to\infty}\frac{1}{d^n}\log[f^n(z),a]_v=0.\label{eq:diophantine}
\end{gather}
(the dependence of $[\cdot,\cdot], E_{\wan}(f)$ and $\mu_f$ on $v$ is emphasized by the suffix $v$).
Since $(E(f)\subset\SAT(f)\subset)C(f)\subset\bP^1(\overline{k})$,
a consequence of (\ref{eq:diophantine}) is
\begin{gather}
 E_{\wan}(f)_v\cap\bP^1(\overline{k})\subset E(f),\label{eq:empty}
\end{gather}
and Theorem \ref{th:derived} recovers
Favre and Rivera-Letelier's arithmetic quantitative equidistribution theorem 
\cite[Corollaire 1.6]{FR06}: under the above 
arithmetic setting,
{\itshape 
for every $a\in\bP^1(\overline{k})\setminus E(f)$, there is $C>0$ such that 
every $C^1$-test function $\phi$ on $\sP^1(\bC_v)$ and every $n\in\bN$,
\begin{gather}
 \left|\int_{\sP^1(\bC_v)}\phi\rd\left(\frac{(f^n)^*(a)}{d^n}-\mu_{f,v}\right)\right|
 \le C\max\{\Lip(\phi),\langle\phi,\phi\rangle^{1/2}\}
\sqrt{kd^{-k}\eta_{a,k}}\label{eq:adelic}
\end{gather}}
($\Lip(\phi)$ and $\langle\phi,\phi\rangle^{1/2}$ also depend on $v$).

\begin{remark}
 Theorem \ref{th:characterization} with (\ref{eq:empty}) also implies
\begin{gather*}
 E_{\Fekete}(f)_v\cap\bP^1(\overline{k})=E(f)
\end{gather*}
($E_{\Fekete}(f)$ also depends on $v$).
This is a substance of the {\itshape adelic equidistribution theorem} 
due to Baker and Rumely \cite[Theorems 2.3 and 4.9]{BR06}, 
Chambert-Loir \cite[Th\'eor\`eme 3.1]{ChambertLoir06} and
Favre and Rivera-Letelier \cite[Th\'eor\`emes 3 et 7]{FR06}.
\end{remark}

It seems interesting to determine when $E_{\Fekete}(f)=E(f)$ holds. 
By Theorem \ref{th:characterization},
this is the case under the condition
\begin{gather}
 C(f)_{\wan}\cap(\cJ(f)\cap\PS(f))=\emptyset.\label{eq:hyperbolic}
\end{gather}
Let us study this problem further
under the assumption that $K$ is archimedean. 
For complex dynamics, see \cite{Milnor3rd}.
 
\begin{definition}\label{th:semihyp}
 We say $f$ to be {\itshape semihyperbolic at} $a\in\bP^1$ if
 there is $r>0$ such that
 $\sup_{k\in\bN}\max_{V^{-k}}\deg(f^k:V^{-k}\to B[a,r])<\infty$,
 where $V^{-k}$ ranges over all components of $f^{-k}(B[a,r])$.
 The unhyperbolic locus $\UH(f)$ is 
 the set of all points at which $f$ is not semihyperbolic
 (as in Theorem \ref{th:DO}). 
\end{definition}

From Ma\~n\'e \cite[Theorem II, Corollary]{ManeFatou}, 
$\UH(f)\cap\cJ(f)$ agrees with
\begin{gather*}
\left(\bigcup_{c\in C(f)\cap\cJ(f),\text{ recurrent}}\omega(c)\right)
\cup\{\text{parabolic periodic points of }f\},
\end{gather*}
and each of Cremer periodic points of $f$ 
and components of the boundaries of Siegel disks and Herman rings of $f$
is contained in $\omega(c)$ for some recurrent $c\in C(f)_{\wan}\cap\cJ(f)$
(a generalization of a theorem of Fatou).
Hence by the final assertion of Theorem \ref{th:characterization},

\begin{maincoro}\label{th:rotation}
 Let $f$ be a rational function on $\bP^1(\bC)$ of degree $>1$.
 If either there is a Cremer periodic point or $\PS(f)\neq\emptyset$,
 then $E(f)\subsetneq E_{\Fekete}(f)$. Indeed,
 $E_{\Fekete}(f)\cap\cJ(f)$ is uncountable.
\end{maincoro}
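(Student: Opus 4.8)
The plan is to derive Corollary \ref{th:rotation} directly from the final assertion of Theorem \ref{th:characterization} together with the structural facts recalled just before the statement. First I would observe that both hypotheses---the existence of a Cremer periodic point, and $\PS(f)\neq\emptyset$---produce, via the classical Fatou-type theorems cited from Ma\~n\'e \cite{ManeFatou}, a recurrent critical point $c\in C(f)_{\wan}\cap\cJ(f)$ whose omega-limit set $\omega(c)$ contains the relevant exceptional object (the Cremer point, or a component of the boundary of a Siegel disk or Herman ring, the latter arising when $\PS(f)\neq\emptyset$ since for archimedean $K$ every presingular domain is a Siegel disk or Herman ring by Fact \ref{th:classification}). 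In particular such a $c$ is pre-recurrent (indeed recurrent) and wandering, so Theorem \ref{th:characterization} applies and gives that $E_{\Fekete}(f)\cap\omega(c)$ is $G_\delta$-dense in $\omega(c)$ under $[\cdot,\cdot]$.

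Next I would argue that $\omega(c)\subset\cJ(f)$, so that $E_{\Fekete}(f)\cap\cJ(f)\supset E_{\Fekete}(f)\cap\omega(c)$ is nonempty; since $E(f)$ is finite (for $d>1$ the exceptional set has at most two points) and $\cJ(f)$ is contained in $\bP^1\setminus E(f)$ up to finitely many points---more precisely $E(f)\cap\cJ(f)$ is finite---the inclusion $E(f)\subsetneq E_{\Fekete}(f)$ follows once we know $E_{\Fekete}(f)\cap\cJ(f)$ strictly exceeds $E(f)$. For this it suffices to show $E_{\Fekete}(f)\cap\cJ(f)$ is infinite, in fact uncountable. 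That comes from the $G_\delta$-density: $\omega(c)$ is a nonempty perfect compact subset of $\cJ(f)$ (it is closed, forward-invariant, contains the recurrent point $c$, and being a subset of the perfect set $\cJ(f)$ with no isolated points---an isolated point of $\omega(c)$ would be periodic, contradicting that $c$ is wandering and $\omega(c)=\omega(c)$---it is itself perfect), hence $\omega(c)$ is a nonempty perfect Polish space; by the Baire category theorem a dense $G_\delta$ subset of such a space is uncountable. Therefore $E_{\Fekete}(f)\cap\omega(c)$, and a fortiori $E_{\Fekete}(f)\cap\cJ(f)$, is uncountable.

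The one point needing a little care---and the main (minor) obstacle---is verifying that $\omega(c)$ is genuinely perfect, i.e.\ has no isolated points, so that Baire category yields uncountability rather than merely infiniteness. The argument is standard: if $y\in\omega(c)$ were isolated in $\omega(c)$, then since $f(\omega(c))\subset\omega(c)$ and the orbit of $c$ accumulates on all of $\omega(c)$, one shows $y$ must be periodic and $\omega(c)$ would then be a finite cycle, forcing $c$ to be preperiodic, contradicting $c\in C(f)_{\wan}$. (If one prefers to avoid this, one may instead note directly that $\omega(c)$ is uncountable whenever $c$ is wandering and recurrent, or simply weaken the conclusion to "$E_{\Fekete}(f)\cap\cJ(f)$ is infinite", which already gives $E(f)\subsetneq E_{\Fekete}(f)$ since $\#E(f)\le 2$.) Finally, assembling: the hypothesis furnishes the recurrent wandering $c\in\cJ(f)$, Theorem \ref{th:characterization} makes $E_{\Fekete}(f)\cap\omega(c)$ dense $G_\delta$ in the perfect compact set $\omega(c)\subset\cJ(f)$, Baire category makes it uncountable, and comparison with the finite set $E(f)$ gives $E(f)\subsetneq E_{\Fekete}(f)$ with $E_{\Fekete}(f)\cap\cJ(f)$ uncountable, as claimed. $\qed$
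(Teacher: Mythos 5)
Your proposal is correct and follows essentially the same route the paper intends: Ma\~n\'e's theorem (and the Fatou-type generalization the paper cites) furnishes a recurrent $c\in C(f)_{\wan}\cap\cJ(f)$ with the Cremer point or boundary component in $\omega(c)$, and the final assertion of Theorem~\ref{th:characterization} then makes $E_{\Fekete}(f)\cap\omega(c)$ a dense $G_\delta$ in $\omega(c)$. The paper leaves the last step implicit, and you supply it correctly: $\omega(c)$ is a nonempty compact subset of $\cJ(f)$ with no isolated points (an isolated point would force $f^k(c)$ to hit it infinitely often, making $c$ preperiodic), so a dense $G_\delta$ in it is uncountable by Baire category, and since $\#E(f)\le 2$ this gives $E(f)\subsetneq E_{\Fekete}(f)$.
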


\begin{definition}
 We say $f$ to be {\itshape geometrically finite} if 
 $C(f)_{\wan}\cap\cJ(f)=\emptyset$, or
 to be {\itshape semihyperbolic} if $\UH(f)\cap\cJ(f)=\emptyset$.
\end{definition}

From Ma\~n\'e's theorem,
if $f$ is either geometrically finite or semihyperbolic,
then $f$ has no Cremer periodic points and $\PS(f)=\emptyset$.
Hence the condition \eqref{eq:hyperbolic} is formally improved as

\begin{maincoro}\label{th:archimedean}
 If $f$ is geometrically finite, then $E_{\Fekete}(f)=E(f)$. 
\end{maincoro}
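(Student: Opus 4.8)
The plan is to derive Corollary \ref{th:archimedean} as a formal consequence of Theorem \ref{th:characterization} together with the characterization of $\UH(f)\cap\cJ(f)$ via Ma\~n\'e's theorem. Recall from Theorem \ref{th:characterization} that $E_{\Fekete}(f)\setminus E(f)\subset E_{\wan}(f)\setminus E(f)$, and that $E_{\wan}(f)$ is defined in \eqref{eq:polarset} as a union over $c\in C(f)_{\wan}\cap(\cJ(f)\cup\PS(f))$. Hence the entire extra locus $E_{\wan}(f)$ is empty --- and therefore $E_{\Fekete}(f)=E(f)$ --- as soon as the index set $C(f)_{\wan}\cap(\cJ(f)\cup\PS(f))$ of that union is empty, which is precisely the condition \eqref{eq:hyperbolic}.

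So the main step is to check that geometric finiteness, i.e.\ $C(f)_{\wan}\cap\cJ(f)=\emptyset$, forces \eqref{eq:hyperbolic}. The inclusion $C(f)_{\wan}\cap\cJ(f)=\emptyset$ already kills the $\cJ(f)$ half of the union, so it remains to show $C(f)_{\wan}\cap\PS(f)=\emptyset$, equivalently that $f$ has no presingular domains at all under geometric finiteness. For this I would invoke the passage in the excerpt following Definition \ref{th:semihyp}: by Ma\~n\'e's theorem \cite[Theorem II, Corollary]{ManeFatou}, the boundary component of any Siegel disk or Herman ring of $f$ --- and a presingular domain in the archimedean case is exactly such a rotation domain by Fact \ref{th:classification} --- lies in $\omega(c)$ for some recurrent $c\in C(f)_{\wan}\cap\cJ(f)$. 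Since $\omega(c)\subset\cJ(f)$ for a recurrent Julia critical point, such a $c$ would be a wandering critical point in $\cJ(f)$, contradicting geometric finiteness. Therefore $\PS(f)=\emptyset$, and consequently $C(f)_{\wan}\cap(\cJ(f)\cup\PS(f))=\emptyset$, which is \eqref{eq:hyperbolic}.

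Once \eqref{eq:hyperbolic} is established, $E_{\wan}(f)=\emptyset$ by inspection of \eqref{eq:polarset} (an empty union of sets is empty), and then $E_{\Fekete}(f)\setminus E(f)\subset E_{\wan}(f)\setminus E(f)=\emptyset$ from \eqref{eq:qualitative}. Combining with the trivial inclusion $E(f)\subset E_{\Fekete}(f)$ --- which holds because the averaged pullbacks $(f^k)^*(a)/d^k$ do not even equidistribute to $\mu_f$ when $a\in E(f)$, hence cannot be $f$-asymptotically Fekete --- yields $E_{\Fekete}(f)=E(f)$, as desired. I would close by remarking that the same argument shows the hypothesis of Corollary \ref{th:archimedean} can be weakened to semihyperbolicity, since by Ma\~n\'e's theorem semihyperbolic $f$ likewise has no Cremer points and $\PS(f)=\emptyset$, though that is precisely the ``formal improvement'' already announced before the statement.

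The part requiring the most care is the reduction $\PS(f)=\emptyset$: one must correctly match the definition of presingular domain in Fact \ref{th:classification} (a cyclic Berkovich Fatou component on which some iterate is injective) with the archimedean dichotomy into Siegel disks and Herman rings, and then cite the correct form of Ma\~n\'e's theorem placing the boundary of each such rotation domain inside $\omega(c)$ for a recurrent $c\in C(f)\cap\cJ(f)$. Everything else is a bookkeeping chase through \eqref{eq:polarset}, \eqref{eq:qualitative}, and the definition of $E_{\Fekete}(f)$, with no real analytic content. I expect the whole proof to be two or three lines once the Ma\~n\'e input is invoked.
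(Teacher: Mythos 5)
Your proposal is correct and follows the paper's argument: invoke Ma\~n\'e's theorem to conclude that geometric finiteness forces $\PS(f)=\emptyset$ (since the boundary of any rotation domain would sit in $\omega(c)$ for a recurrent $c\in C(f)_{\wan}\cap\cJ(f)$), so the index set in \eqref{eq:polarset} is empty and Theorem \ref{th:characterization} gives $E_{\Fekete}(f)\setminus E(f)\subset E_{\wan}(f)\setminus E(f)=\emptyset$. One small imprecision worth noting: a presingular domain $U$ is not itself a Siegel disk or Herman ring but only eventually mapped onto one (namely $f^m(U)$), though this does not affect the argument since the existence of any presingular domain already forces the existence of a rotation domain and hence of the offending recurrent Julia critical point.
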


\begin{remark}\label{th:intermidiate}
 It is possible to construct a semihyperbolic real bimodal cubic polynomial $f$ 
 with one (non-recurrent and) pre-recurrent critical point in $J(f)$,
 so that $E_{\Fekete}(f)\cap\cJ(f)\neq\emptyset$,
 using the kneading theory for bimodal maps having
 one strictly preperiodic critical point 
 developed by Mihalache \cite{Mihalache_construction}. Alternatively, 
 the $1$-parameter family of polynomials in \cite[\S 6.1]{PRS03}
 also produces a semihyperbolic (complex) polynomial with the same property.
\end{remark}

In Section \ref{sec:facts}, we gather background material on dynamics and
potential theory on $\sP^1$. In Section \ref{eq:proof} we show
Theorems \ref{th:algebraic}, \ref{th:characterization}, \ref{th:order} and \ref{th:derived}.
In Section \ref{sec:error} we give a proof of Proposition
\ref{th:cauchy}.

\section{Background}\label{sec:facts}
For the foundation including the construction of Laplacian $\Delta$ on $\sP^1$, 
see \cite{BR10}, \cite{FR06}, \cite{Jonsson12}. 

We denote the origin of $K^2$ by $0$. Both the maximum
norm on $K^2$ for non-archimedean $K$ and
the Euclidean norm on $K^2$ for archimedean $K$ are denoted by the same $|\cdot|$.
Let $\pi:K^2\setminus\{0\}\to\bP^1=\bP^1(K)$ be the canonical projection. 
Put $(z_0,z_1)\wedge(w_0,w_1):=z_0w_1-z_1w_0$ on $K^2\times K^2$. 
The normalized chordal distance $[z,w]$ on $\bP^1$ is 
\begin{gather*}
 [z,w]:= |p\wedge q|/(|p|\cdot|q|)
\end{gather*} 
if $p\in\pi^{-1}(z)$ and $q\in\pi^{-1}(w)$.
For non-archimedean $K$,
the canonical (or Gauss) point $\cS_{\can}$ of $\sP^1$ is the unit ball $\{z\in
K;|z|\le 1\}$ in $K$. 

Suppose that $K$ is non-archimedean. 
The strong triangle inequality implies that
any point of a ball $\cS$ in $K$ can be
its center, and its radius and the diameter $\diam\cS$ are identical.
For balls $\cS,\cS'$ in $K$,
$\cS\wedge\cS'$ is the smallest ball in
$K$ containing $\cS\cup\cS'$. 
The {\itshape Hsia kernel}
$\delta_{\infty}(\cdot,\cdot)$ on $\sP^1\setminus\{\infty\}$ is 
an extension of $|z-w|$ for $z,w\in K$ so that
\begin{gather}
 \delta_{\infty}(\cS,\cS') =\diam(\cS\wedge\cS')\label{eq:Hsia}
\end{gather}
for balls $\cS,\cS'$ in $K$. 

The {\itshape big model metric $\rho$ on} $\sH^1$ is an extension of
the {\itshape modulus} 
\begin{gather*}
 \rho(\cS,\mathcal{S'}):=\log(\diam\cS'/\diam\cS)
\end{gather*}
for balls $\cS\subset\cS'$ in $K$ as a path-length metric on $\sH^1$.
Let $\rd\rho$ be the $1$-dimensional Hausdorff measure on $\sH^1$ under
$\rho$. A function $\phi$ on $\sP^1$ is said to be $C^1$ 
if it is locally constant on $\sP^1$ except for a finite sub-tree
in $\sH^1$ and if $\phi'=\rd\phi/\rd\rho$ exists and is
continuous there. The Dirichlet norm of $\phi$
is defined by $\langle\phi,\phi\rangle^{1/2}$,
where $\langle\phi,\phi\rangle$ is the integration of $(\phi')^2$ in $\rd\rho$ over $\sH^1$
(\cite[\S 5.5]{FR06}). 

\begin{fact}
 Since $\PGL(2,K)$ is the linear fractional isometry group on $\sH^1$ under $\rho$
 (\cite[\S 3.4]{FR06}),
 the Dirichlet norm $\langle\phi,\phi\rangle^{1/2}$ of $\phi$ is $\PGL(2,K)$-invariant in that
 for every $h\in\PGL(2,K)$, $\langle h^*\phi,h^*\phi\rangle=\langle\phi,\phi\rangle$.
\end{fact}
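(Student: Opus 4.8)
The plan is to reduce the identity $\langle h^*\phi,h^*\phi\rangle=\langle\phi,\phi\rangle$ to two ingredients attached to $h\in\PGL(2,K)$ acting as a $\rho$-isometry of the $\bR$-tree $\sH^1$: (i) that $h^*\phi=\phi\circ h$ is again $C^1$, with metric derivative $(h^*\phi)'=\pm(\phi'\circ h)$ pointwise on $\sH^1$; and (ii) that the $1$-dimensional Hausdorff measure $\rd\rho$ is invariant under $h$, so that $\int_{\sH^1}(g\circ h)\,\rd\rho=\int_{\sH^1}g\,\rd\rho$ for every Borel $g\ge0$. Once these are in hand, I would simply take $g=(\phi')^2$ and compute
\[
\langle h^*\phi,h^*\phi\rangle=\int_{\sH^1}\bigl((h^*\phi)'\bigr)^2\rd\rho=\int_{\sH^1}(\phi'\circ h)^2\,\rd\rho=\int_{\sH^1}(\phi')^2\,\rd\rho=\langle\phi,\phi\rangle,
\]
which is the assertion.

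For ingredient (i), I would first recall that $h$ acts on $\sP^1$ as a homeomorphism preserving $\bP^1$ and $\sH^1$ and carrying finite subtrees of $\sH^1$ to finite subtrees; so if $\phi$ is locally constant on $\sP^1$ off a finite subtree $T\subset\sH^1$ with $\phi'=\rd\phi/\rd\rho$ continuous on $T$, then $h^*\phi$ is locally constant off the finite subtree $h^{-1}(T)$. Next, on any $\rho$-geodesic segment parametrized by $\rho$-arc length $\gamma\colon[0,L]\to\sH^1$, the composite $h\circ\gamma$ is again a $\rho$-arc-length parametrization of the image segment, precisely because $h$ is a $\rho$-isometry; so the chain rule gives $\frac{\rd}{\rd t}\bigl((\phi\circ h)\circ\gamma\bigr)(t)=\pm\,\phi'\bigl(h(\gamma(t))\bigr)$, the sign depending only on whether $h$ preserves or reverses the orientation of that segment. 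This shows $h^*\phi$ is $C^1$ and $\bigl((h^*\phi)'\bigr)^2=(\phi'\circ h)^2$ on all of $\sH^1$ away from the $\rd\rho$-negligible set of branch points and segment endpoints, which does not affect the integral. Ingredient (ii) is immediate: $\rd\rho$ is by definition the $1$-dimensional Hausdorff measure of the path metric $\rho$, and any $\rho$-isometry preserves the associated Hausdorff measure, i.e.\ $h_*(\rd\rho)=\rd\rho$.

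The only step with any real content is (i), and even there the difficulty is merely bookkeeping: one must check that the local structure of $\phi$ (locally constant off a finite subtree, continuously differentiable along that subtree in the $\rho$-parametrization) is transported by $h$, and that the $\rho$-arc-length parametrization of geodesic segments is respected by an isometry. Both are formal consequences of $h$ being a homeomorphism of $\sP^1$ restricting to a $\rho$-isometry of $\sH^1$ that maps finite subtrees to finite subtrees; no estimates are involved. I therefore expect the proof to be short, the main (mild) obstacle being a careful statement and verification of the transformation rule for the metric derivative $\phi'$ under $h$.
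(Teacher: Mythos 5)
Your argument is correct and is exactly the natural verification of what the paper states without proof (it is presented as a \textbf{Fact}, citing \cite[\S 3.4]{FR06} only for the isometry statement about $\PGL(2,K)$): since $h$ is a $\rho$-isometry of the $\bR$-tree $\sH^1$ carrying finite subtrees to finite subtrees and preserving arc-length parametrizations of geodesic segments, $h^*\phi$ is again $C^1$ with $((h^*\phi)')^2=(\phi'\circ h)^2$, and the push-forward invariance $h_*(\rd\rho)=\rd\rho$ of the $1$-dimensional Hausdorff measure gives $\int_{\sH^1}((h^*\phi)')^2\,\rd\rho=\int_{\sH^1}(\phi')^2\,\rd\rho$. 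Your explicit acknowledgement that the sign ambiguity in $(h^*\phi)'$ is immaterial because only the square enters the Dirichlet form, and that the finitely many branch points are $\rd\rho$-negligible, is the right level of care for this statement.
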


Put $|\cS|:=\delta_{\infty}(\cS,0)=\sup_{z\in\cS}|z|$ for each ball $\cS$ in $K$.
The {\itshape small model metric} $\sd$ on $\sP^1$ 
is an extension of $[\cdot,\cdot]$ as a path-length metric on $\sP^1$ so that
\begin{gather*}
 \sd(\cS,\mathcal{S'})=\frac{\diam(\cS\wedge\cS')}{\max\{1,|\cS|\}\max\{1,|\cS'|\}}
 -\frac{1}{2}\left(\frac{\diam\cS}{\max\{1,|\cS|\}^2}+\frac{\diam\cS'}{\max\{1,|\cS'|\}^2}\right)
\end{gather*}
for balls $\cS,\cS'$ in $K$. 
For the potential theory on $\sP^1$, another extension of $[\cdot,\cdot]$ is essential.
The {\itshape generalized Hsia kernel} $\delta_{\can}(\cdot,\cdot)$ on $\sP^1$
with respect to $\mathcal{S}_{\can}$ is an extension of $[\cdot,\cdot]$ so that
\begin{gather*}
 \delta_{\can}(\cS,\cS')=\frac{\diam(\cS\wedge\cS')}{\max\{1,|\cS|\}\max\{1,|\cS'|\}}
\end{gather*}
for balls $\cS,\cS'$ in $K$. Then
$\{a\in\sP^1;\delta_{\can}(a,a)=0\}=\bP^1$.

For archimedean $K\cong\bC$,
we put $\delta_{\infty}(z,w):=|z-w|$ ($z,w\in K$), and
define $\delta_{\can}(\cdot,\cdot)$ by $[\cdot,\cdot]$
as convention. 
We define $C^1$-regularity and the Lipschitz continuity under $[\cdot,\cdot]$
of functions on $\bP^1$ as usual.

\begin{fact}\label{th:invariance}
 For archimedean $K$, 
 $\PSU(2,K)$ is the linear fractional isometry group on $\bP^1$ under
 $[\cdot,\cdot]$. For non-archimedean $K$, so is $\PGL(2,\mathcal{O}_K)$,
 where $\mathcal{O}_K=\mathcal{O}_{K,v}$ is the ring of integers of $K$ (cf.\ \cite[\S 1]{Benedetto03}). 
 The {\itshape chordal kernel} $\log\delta_{\can}(\cdot,\cdot)$ is interpreted as
 a Gromov product on $\sP^1$ by the equality
 \begin{gather}
  \log\delta_{\can}(\cS,\cS')=-\rho(\cS'',\cS_{\can})\label{eq:Gromov}
 \end{gather}
 for balls $\cS,\cS'\in\sP^1$,
 where $\cS''$ is the unique point in $\sP^1$ such that
 it lies between $\cS$ and $\cS'$, between $\cS'$ and $\cS_{\can}$,
 and between $\cS$ and $\cS_{\can}$.
\end{fact}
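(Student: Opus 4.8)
The plan is to treat the two assertions of Fact~\ref{th:invariance} separately: the identification of the chordal isometry groups, and the Gromov-product identity \eqref{eq:Gromov}, which has content only for non-archimedean $K$ (in the archimedean case $\sP^1=\bP^1$, $\sH^1=\emptyset$, and $\delta_{\can}=[\cdot,\cdot]$ by convention). Every linear fractional transformation of $\bP^1$ is $h=\pi\circ A$ for some invertible $2\times2$ matrix $A$ over $K$, unique up to a nonzero scalar, and for $p,q\in K^2\setminus\{0\}$ one has the bilinear identity $(Ap)\wedge(Aq)=(\det A)(p\wedge q)$; hence, writing $z=\pi(p)$, $w=\pi(q)$,
\begin{equation*}
 [h(z),h(w)]=\frac{|(Ap)\wedge(Aq)|}{|Ap|\,|Aq|}=|\det A|\cdot\frac{|p|\,|q|}{|Ap|\,|Aq|}\cdot[z,w].
\end{equation*}
Reading this off, $h$ is a $[\cdot,\cdot]$-isometry if and only if $|Ap|=c\,|p|$ for all $p\in K^2$ with a constant $c>0$ (necessarily $c^2=|\det A|$). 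For archimedean $K$ this says $c^{-1}A$ preserves the Euclidean norm, i.e.\ $c^{-1}A\in\mathrm{U}(2,K)$, so $h\in\PSU(2,K)$; for non-archimedean $K$ it forces both $c^{-1}A$ and its inverse to have entries in $\mathcal{O}_K$, i.e.\ $c^{-1}A\in\mathrm{GL}(2,\mathcal{O}_K)$, so $h\in\PGL(2,\mathcal{O}_K)$. The converse inclusions are immediate from the same formula. This is the content of \cite[\S1]{Benedetto03} for non-archimedean $K$, and is classical for $K\cong\bC$.

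For the Gromov-product identity, recall that $\sH^1$ is an $\bR$-tree under $\rho$ while the points of $\bP^1$ lie at infinite $\rho$-distance; in particular, for $\cS,\cS'\in\sP^1$ the geodesic segments $[\cS,\cS']$, $[\cS',\cS_{\can}]$ and $[\cS_{\can},\cS]$ meet in a single point, which is exactly the $\cS''$ of the statement, and $\rho(\cS'',\cS_{\can})$ coincides with the Gromov product $\tfrac12\bigl(\rho(\cS,\cS_{\can})+\rho(\cS',\cS_{\can})-\rho(\cS,\cS')\bigr)$ whenever the latter makes sense. Thus \eqref{eq:Gromov} asserts that $-\log\delta_{\can}(\cdot,\cdot)$ is the Gromov product based at the Gauss point. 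By density of $\bP^1$ in $\sP^1$ and continuity of both sides off $\diag_{\bP^1}$, it suffices to verify this for $\cS,\cS'$ balls in $K$, where $\delta_{\can}(\cS,\cS')=\diam(\cS\wedge\cS')/(\max\{1,|\cS|\}\max\{1,|\cS'|\})$; using the $\PGL(2,\mathcal{O}_K)$-invariance of $\delta_{\can}$ and of the pair $(\rho,\cS_{\can})$ just established, one reduces to a short case analysis on the positions of $\cS$ and $\cS'$ relative to the unit ball $\cS_{\can}$. When $\cS,\cS'\subseteq\cS_{\can}$, the normalizing factors are $1$, $\cS''=\cS\wedge\cS'$ lies below $\cS_{\can}$, and $\rho(\cS'',\cS_{\can})=\log\bigl(1/\diam(\cS\wedge\cS')\bigr)=-\log\delta_{\can}(\cS,\cS')$ directly from the definitions of $\rho$ and $\delta_{\can}$; in the remaining configurations the factors $\max\{1,|\cS|\}$ and $\max\{1,|\cS'|\}$ account precisely for the $\rho$-lengths of the portions of the respective geodesics lying above $\cS_{\can}$, and the same computation goes through. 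This identity is also part of the potential-theoretic foundations, cf.\ \cite{BR10}.

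Neither assertion is genuinely hard: the first is the wedge identity plus the description of norm-preserving matrices, and the second is a repackaging of the structure of $\sP^1$ as a metric tree. The only point that requires care is that $\sP^1$ is not literally a metric space under $\rho$, since the points of $\bP^1$ sit at infinite $\rho$-distance; the ``Gromov product'' in \eqref{eq:Gromov} must therefore be read as the finite quantity $\rho(\cS'',\cS_{\can})$ --- the distance from $\cS_{\can}$ to the geodesic joining $\cS$ and $\cS'$ --- rather than as a naive alternating sum of distances. With that understood, the bookkeeping of the $\max\{1,|\cdot|\}$ normalization above the Gauss point is routine.
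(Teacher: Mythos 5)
The paper does not prove this statement: it is labeled a \emph{Fact} and is stated as background, with the isometry-group description attributed to \cite[\S 1]{Benedetto03} and the Gromov-product identity to the foundational literature (\cite{BR10}, \cite{FR06}). Your reconstruction is correct. The first half — the wedge identity $(Ap)\wedge(Aq)=(\det A)(p\wedge q)$, reduction of the isometry condition to norm-preservation of $c^{-1}A$, and then identification with $\mathrm{U}(2)$ resp.\ $\mathrm{GL}(2,\mathcal{O}_K)$ — is complete and is exactly the standard argument. For the Gromov-product identity, your tree-theoretic description of $\cS''$ as the median of $\{\cS,\cS',\cS_{\can}\}$ and the explicit verification in the case $\cS,\cS'\subseteq\cS_{\can}$ are right; the remaining configurations (one or both of $\cS,\cS'$ not contained in $\cS_{\can}$, disjoint from $\cS_{\can}$, or containing it) do go through by the same bookkeeping with $\max\{1,|\cS|\}$, though you only assert this. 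One small wrinkle in the exposition: the density-of-$\bP^1$-plus-continuity reduction is misplaced, since the points called ``balls in $K$'' already comprise all of $\sP^1\setminus\{\infty\}$ other than type~4 points; the statement as written only concerns such balls, so either no limiting argument is needed, or (if one wants \eqref{eq:Gromov} on all of $\sP^1$) the continuity argument should run from type-1/2/3 points to type-4 limits rather than from $\bP^1$ to $\sP^1$. None of this affects correctness.
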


Let $f$ be a rational function of degree $d>1$ on $\bP^1$.
A {\itshape lift} $F$ of $f$ is a homogeneous polynomial map $F:K^2\to K^2$ 
such that $\pi\circ F=f\circ\pi$ and that $F^{-1}(0)=\{0\}$, i.e., non-degenerate, 
and is unique upto multiplication in $K^*=K\setminus\{0\}$.
The action of $f$ on $\bP^1$ canonically extends to a continuous, open, 
surjective and fiber-discrete endomorphism on $\sP^1$, and to each $a\in\sP^1$, 
the local degree $\deg_a f$ of $f$ at $a$ also canonically extends.
The pullback $f^*$ and push-forward $f_*$ 
on the space of continuous functions on $\sP^1$ and
that of Radon measures on $\sP^1$ are defined as usual. 
The dynamical Green function of $F$ is the uniform limit 
\begin{gather*}
 g_F:=\sum_{j=0}^{\infty}\frac{(f^j)^*T_F}{d^j}
\end{gather*}
on $\sP^1$, where $T_F:=(\log|F|)/d-\log|\cdot|$
descends to $\bP^1$ and extends continuously to $\sP^1$.
For other lifts of $f$, which are written as $cF$ for some $c\in K^*$, 
the homogeneity of $F$ implies
\begin{gather}
 g_{cF}=g_F+\frac{1}{d-1}\log|c|.\label{eq:ambiguitygreen}
\end{gather}

\begin{fact}
 The uniform limit
 \begin{gather}
 G^F:=(g_F|\bP^1)\circ \pi+\log|\cdot|=\lim_{k\to\infty}\frac{1}{d^k}\log|F^k|\label{eq:escaping}
 \end{gather}
 on $K^2\setminus\{0\}$ is called the escaping rate function of $F$, and
 satisfies that $G^F\circ F=d\cdot G^F$.
\end{fact}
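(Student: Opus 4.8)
The plan is to deduce all three assertions from a single telescoping identity relating consecutive normalizations $d^{-j}\log|F^j|$ to the cocycle $T_F$. First I would record the elementary facts about $T_F=\tfrac{1}{d}\log|F|-\log|\cdot|$: since $F$ is homogeneous of degree $d$, $T_F$ is homogeneous of degree $0$, hence descends to $\bP^1$, and, as already noted in the text, extends continuously to the compact space $\sP^1$; in particular $M_F:=\sup_{\sP^1}|T_F|<\infty$. Since $F$ is non-degenerate ($F^{-1}(0)=\{0\}$), we have $F^j(z)\neq 0$ for every $z\neq 0$ and every $j$, so all the quantities below are well defined.

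Next, for every $z\in K^2\setminus\{0\}$ and every $k\in\bN$, I would establish
\[
 \frac{1}{d^k}\log|F^k(z)|-\log|z|
 =\sum_{j=0}^{k-1}\left(\frac{1}{d^{j+1}}\log|F^{j+1}(z)|-\frac{1}{d^j}\log|F^j(z)|\right)
 =\sum_{j=0}^{k-1}\frac{1}{d^j}T_F\bigl(F^j(z)\bigr),
\]
where the last equality uses homogeneity of $F$ of degree $d$ to rewrite the $j$-th difference as $d^{-j}\bigl(\tfrac{1}{d}\log|F(F^j(z))|-\log|F^j(z)|\bigr)=d^{-j}T_F(F^j(z))$. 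Regarding $T_F$ as a function on $\bP^1$ and using $\pi(F^j(z))=f^j(\pi(z))$, the right-hand side equals $\sum_{j=0}^{k-1}d^{-j}\bigl((f^j)^*T_F\bigr)(\pi(z))$, which, by the very definition of $g_F$, converges as $k\to\infty$ to $(g_F|\bP^1)(\pi(z))$; moreover the convergence is uniform in $z\in K^2\setminus\{0\}$, since the tail $\sum_{j\ge k}d^{-j}\bigl((f^j)^*T_F\bigr)(\pi(z))$ is bounded in absolute value by $M_F\,d^{-k}/(1-d^{-1})$ independently of $z$. Therefore $d^{-k}\log|F^k|$ converges uniformly on $K^2\setminus\{0\}$ to $(g_F|\bP^1)\circ\pi+\log|\cdot|$, which is precisely \eqref{eq:escaping}.

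Finally, granting that the limit in \eqref{eq:escaping} exists, the functional equation follows by an index shift: for $z\neq 0$,
\[
 G^F(F(z))=\lim_{k\to\infty}\frac{1}{d^k}\log|F^k(F(z))|
 =\lim_{k\to\infty}\frac{1}{d^k}\log|F^{k+1}(z)|
 =d\cdot\lim_{k\to\infty}\frac{1}{d^{k+1}}\log|F^{k+1}(z)|
 =d\cdot G^F(z).
\]
There is no genuine obstacle here; the only point that needs a word of care is the finiteness $M_F<\infty$, equivalently that $|F(z)|$ is comparable to $|z|^d$ on $K^2\setminus\{0\}$. This is immediate from the continuous extension of $T_F$ to the compact $\sP^1$ recorded above (in the non-archimedean case it is the classical resultant estimate bounding $|F(z)|$ from below by a constant times $|z|^d$), and it is exactly what makes the geometric tail bound --- hence the passage from uniform convergence on $\bP^1$ to uniform convergence on $K^2\setminus\{0\}$ --- legitimate.
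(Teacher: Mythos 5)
Your proof is correct, and it is essentially the argument the paper implicitly relies on: the Fact is stated without proof precisely because, with $g_F$ defined as the uniform sum $\sum_{j\ge 0}(f^j)^*T_F/d^j$ on the compact space $\sP^1$, the telescoping identity $d^{-k}\log|F^k(z)|-\log|z|=\sum_{j=0}^{k-1}d^{-j}T_F(F^j(z))$ together with the uniform geometric tail bound gives \eqref{eq:escaping}, and the index shift gives $G^F\circ F=d\cdot G^F$. (Only a cosmetic remark: the telescoping step itself uses just $F^{j+1}=F\circ F^j$ and the definition of $T_F$; homogeneity is what you need, as you note elsewhere, for $T_F$ to descend to $\bP^1$ and for its boundedness via the continuous extension to $\sP^1$.)
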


The Laplacian $\Delta$ on $\sP^1$
is normalized so that for every $w\in\sP^1\setminus\{\infty\}$,
\begin{gather*}
 \Delta\log\delta_{\infty}(\cdot,w)=(w)-(\infty).
\end{gather*}
For non-archimedean $K$,
see \cite[\S 5]{BR10}, \cite[\S7.7]{FJbook}, \cite[\S 3]{Thuillierthesis}: 
in \cite{BR10} the opposite sign convention on $\Delta$ is adopted.

The equilibrium measure of $f$ is 
\begin{gather*}
 \mu_f:=
\begin{cases}
 \Delta g_F+(\cS_{\can}) & (K \text{ is non-archimedean}),\\
 \Delta g_F+\omega & (K \text{ is archimedean}),
\end{cases}
\end{gather*} 
where for archimedean $K$, $\omega$ is the normalized Fubini-Study area element on $\bP^1$.
Then $\mu_f$ is a probability Radon measure on $\sP^1$ and independent of choices of $F$.
Moreover, $\mu_f$ has no atom in $\bP^1$ and is balanced and invariant 
under $f$ in that
\begin{gather*}
 \frac{f^*\mu_f}{d}=\mu_f=f_*\mu_f. 
\end{gather*}

\begin{fact}\label{th:Holder}
 The extended endomorphism $f$ on $(\sP^1,\sd)$ is $M$-Lipschitz continuous
 for some $M>d$, and the function $T_F$ is Lipschitz continuous on $(\sP^1,\sd)$
 (for non-archimedean $K$, see \cite[Theorems 10, 13]{KS07}). 
 Put $\kappa:=(\log d)/(\log M)\in(0,1)$. Then
 there is $C>0$ such that for every $\alpha\in(0,\kappa)$,
 every $z\in\sP^1$ and every $w\in\sP^1$,
\begin{gather*}
 |g_F(z)-g_F(w)|
 \le C\left(\sum_{j=0}^{\infty}(\frac{M^{\alpha}}{d})^j\right)\sd(z,w)^{\alpha},
\end{gather*}
 that is, $g_F$ is $\alpha$-H\"older continuous on $\sP^1$ under $\sd$.
 With more effort, it is possible to shows that $g_F$ is 
 indeed $\kappa$-H\"older continuous on $\sP^1$ under $\sd$ (cf.\ \cite[\S6.6]{FR06}). 
 The uniform limit
 $g_{\infty}:=\sum_{j=0}^{\infty}(f^j)^*T_{F,\infty}/d^j$ on $\sP^1$,
 where $|\cdot|_{\max}$ is the maximum norm on $K^2$ and
 $T_{F,\infty}:=(\log|F|_{\max})/d-\log|\cdot|_{\max}$ descends to $\bP^1$
 and extends continuously to $\sP^1$,
 agrees with $g_F$ for non-archimedean $K$. For archimedean $K$,
 $g_{\infty}$ is also $\kappa$-H\"older continuous on $\bP^1$ under $[\cdot,\cdot]$
 and satisfies that $\Delta g_{\infty}=\mu_f-\lambda_1$, where
 $\lambda_1$ is the normalized Lebesgue measure on the unit circle in $K\cong\bC$.
\end{fact}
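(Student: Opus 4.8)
The plan is to derive every assertion from the telescoping representation $g_F=\sum_{j\ge 0}d^{-j}(T_F\circ f^j)$ together with two soft inputs. First, $T_F$ and $T_{F,\infty}$ are continuous on the compact space $\sP^1$ --- this is exactly where the non-degeneracy $F^{-1}(0)=\{0\}$ enters, preventing $-\infty$ values --- hence bounded. Second, the extended $f$ and the potentials $T_F,T_{F,\infty}$ are $\sd$-Lipschitz: for non-archimedean $K$ this is \cite[Theorems 10, 13]{KS07}, while for archimedean $K$ it is classical, since $f$ is a holomorphic (hence smooth) self-map of the compact manifold $\bP^1$, the function $\log(|F(\cdot)|^2/|\cdot|^{2d})$ is smooth on $\bP^1$, and $[\cdot,\cdot]$ is bilipschitz to any smooth metric there (the max-norm variant $T_{F,\infty}$ has only finitely many, harmless, corners and remains Lipschitz). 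Enlarging the common constant yields $M>d$, and only its finiteness and the strict inequality $M>d$ will matter.

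For the H\"older bound with exponent $\alpha\in(0,\kappa)$, I would estimate $|g_F(z)-g_F(w)|$ termwise, bounding the $j$-th summand $d^{-j}|T_F(f^j(z))-T_F(f^j(w))|$ by interpolating between the Lipschitz estimate $\le d^{-j}L_F M^j\sd(z,w)$ (with $L_F:=\Lip_{\sd}(T_F)$, using that $f^j$ is $M^j$-Lipschitz) and the trivial estimate $\le d^{-j}\cdot 2\sup_{\sP^1}|T_F|$, via $\min\{a,b\}\le a^\alpha b^{1-\alpha}$. Summing gives
\begin{gather*}
 |g_F(z)-g_F(w)|\le L_F^\alpha\bigl(2\sup_{\sP^1}|T_F|\bigr)^{1-\alpha}\Bigl(\sum_{j\ge0}(M^\alpha/d)^j\Bigr)\sd(z,w)^\alpha ,
\end{gather*}
where $M^\alpha/d<M^\kappa/d=1$ makes the geometric factor finite, and $L_F^\alpha(2\sup_{\sP^1}|T_F|)^{1-\alpha}\le C:=\max\{L_F,2\sup_{\sP^1}|T_F|,1\}$ is independent of $\alpha$, which is the assertion.

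For the endpoint regularity (the ``with more effort'' clause) the above geometric series diverges at $\alpha=\kappa$, so instead I would split the sum at $N:=\lfloor\log(1/\sd(z,w))/\log M\rfloor$: for $j\le N$ the Lipschitz estimate contributes $\lesssim\sd(z,w)\sum_{j\le N}(M/d)^j\lesssim\sd(z,w)(M/d)^N$ (dominated by the last term as $M/d>1$), and for $j>N$ the trivial estimate contributes $\lesssim d^{-N}$; since $M^N\asymp\sd(z,w)^{-1}$, hence $d^N=(M^N)^\kappa\asymp\sd(z,w)^{-\kappa}$, both pieces are $O(\sd(z,w)^\kappa)$. This is essentially \cite[\S6.6]{FR06}; keeping the implied constants uniform across the two boundary ranges and the floor function is the one genuinely delicate point, and I expect it to be the main obstacle in making this remark precise.

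Finally, the statements on $g_\infty$. For non-archimedean $K$ the max-norm on $K^2$ \emph{is} the chosen norm $|\cdot|$, so $T_{F,\infty}=T_F$ and $g_\infty=g_F$. For archimedean $K$, put $h:=\log|\cdot|-\log|\cdot|_{\max}$, a well-defined function on $\bP^1$ (both terms being $0$-homogeneous in $v\in K^2$); in the affine coordinate $z$ it is $h=\tfrac12\log(1+|z|^2)-\log^+|z|$, which is bounded, smooth off $\{|z|=1\}$, merely Lipschitz across $\{|z|=1\}$, and smooth near $\infty$ where $h=\tfrac12\log(1+|z|^{-2})=O([z,\infty]^2)$; thus $h$ is globally Lipschitz --- in particular $\kappa$-H\"older --- on $(\bP^1,[\cdot,\cdot])$, and $\Delta h=\omega-\lambda_1$ by the standard local identities $\Delta\tfrac12\log(1+|z|^2)=\omega-(\infty)$ and $\Delta\log^+|z|=\lambda_1-(\infty)$. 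The key observation is the one-step identity of functions on $\bP^1$
\begin{gather*}
 T_F-T_{F,\infty}=\tfrac1d(h\circ f)-h ,
\end{gather*}
which holds because $\log|F(v)|-\log|F(v)|_{\max}=h(f(\pi(v)))$; substituting it into $g_F-g_\infty=\sum_{j\ge0}d^{-j}\bigl((T_F-T_{F,\infty})\circ f^j\bigr)$ makes the series telescope to $g_F-g_\infty=-h$ (the surviving term $d^{-N}(h\circ f^N)\to0$ since $h$ is bounded). Hence $g_\infty=g_F+h$, which is $\kappa$-H\"older on $(\bP^1,[\cdot,\cdot])$ as the sum of the $\kappa$-H\"older function $g_F$ (by the previous two steps, noting $\sd=[\cdot,\cdot]$ in the archimedean case) and the Lipschitz $h$; and by linearity of $\Delta$, together with $\Delta g_F=\mu_f-\omega$ (the defining relation $\mu_f=\Delta g_F+\omega$), we get $\Delta g_\infty=(\mu_f-\omega)+(\omega-\lambda_1)=\mu_f-\lambda_1$, as desired.
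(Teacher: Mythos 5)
The paper states this as a background \emph{Fact} and supplies no proof, pointing instead to \cite[Theorems 10, 13]{KS07} for the Lipschitz inputs and to \cite[\S 6.6]{FR06} for the endpoint regularity; so there is no internal argument to compare against. Your proposal is correct and is the natural argument behind the cited references: the termwise interpolation $d^{-j}|T_F(f^j z)-T_F(f^j w)|\le d^{-j}\min\{L_F M^j\sd(z,w),\,2\sup|T_F|\}\le d^{-j}(L_F M^j\sd(z,w))^{\alpha}(2\sup|T_F|)^{1-\alpha}$ produces exactly the displayed bound with an $\alpha$-independent constant, and the dyadic split at $N\asymp\log(1/\sd(z,w))/\log M$ gives the endpoint $\kappa$-H\"older estimate. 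For the archimedean $g_\infty$-statements your telescoping identity $T_F-T_{F,\infty}=\tfrac{1}{d}(h\circ f)-h$ with $h=\log|\cdot|-\log|\cdot|_{\max}$ yields $g_\infty=g_F+h$ cleanly, and the local computation $\Delta h=\omega-\lambda_1$ (using $\Delta\tfrac12\log(1+|z|^2)=\omega-(\infty)$ and $\Delta\log^+|z|=\lambda_1-(\infty)$ in the paper's normalization) gives $\Delta g_\infty=\mu_f-\lambda_1$ as claimed. All steps check, including the observation that for non-archimedean $K$ the ambient $|\cdot|$ \emph{is} $|\cdot|_{\max}$, so $g_\infty=g_F$ trivially.
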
 

We refer \cite{Brelot67} for an abstract potential theory on
a locally compact topological space equipped with 
an upper semicontinuous kernel. For non-archimedean $K$, see \cite{BR10},
and for archimedean $K$, see \cite{ST97}, \cite{Tsuji59}.

\begin{fact}\label{th:chordal}
 The {\itshape chordal capacity} of a Borel set $E$ in $\sP^1$ is defined by
 \begin{gather*}
 \Capa_{\can}(E):=\exp\left(\sup_{\mu}\int_{\sP^1\times\sP^1}\log\delta_{\can}(z,w)\rd(\mu\times\mu)(z,w)\right),
 \end{gather*}
 where the supremum is taken over all probability Radon measures $\mu$ on $\sP^1$
 with $\supp\mu\subset E$. If $E$ is compact, then the $\sup$ can be
 replaced by $\max$, and if in addition $\Capa_{\can}(E)>0$,
 then the $\max$ is taken at the unique $\mu$, and the {\itshape chordal potential}
 \begin{gather*}
  U_{\can,\mu}(z):=\int_{\sP^1}\log\delta_{\can}(z,w)\rd\mu(w)
 \end{gather*}
 for this $\mu$ is bounded from below by $\log\Capa_{\can}(E)$ on $\sP^1$. 
\end{fact}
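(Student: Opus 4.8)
The plan is to deduce the statement from the classical Frostman theory of equilibrium measures: for archimedean $K$ this is the logarithmic potential theory on $\bP^1$ of \cite{Tsuji59}, \cite{ST97}, and for non-archimedean $K$ it is the capacity theory of \cite[\S6]{BR10}, both instances of the abstract theory over a locally compact space with an upper semicontinuous kernel developed in \cite{Brelot67}. First I would record the three structural facts about the kernel $\log\delta_{\can}$ on the compact metric space $(\sP^1,\sd)$ that make this machinery applicable: (i) $\log\delta_{\can}$ is symmetric, $[-\infty,0]$-valued (from $\delta_{\can}\le 1$) and upper semicontinuous, with $\{\log\delta_{\can}=-\infty\}=\diag_{\bP^1}$; (ii) consequently the energy functional $\mu\mapsto I(\mu):=\int\int\log\delta_{\can}\,\rd(\mu\times\mu)$ is upper semicontinuous on the set $\mathcal{P}(E)$ of probability Radon measures $\mu$ on $\sP^1$ with $\supp\mu\subset E$, in the weak-$*$ topology (write $\log\delta_{\can}$ as a decreasing limit of continuous functions and pass to the limit); and (iii) the \emph{energy principle}: for all probability Radon measures $\mu,\mu'$ of finite energy the mutual energy $I(\mu,\mu'):=\int\int\log\delta_{\can}\,\rd(\mu\times\mu')$ is finite and $2I(\mu,\mu')\ge I(\mu)+I(\mu')$, with equality only if $\mu=\mu'$. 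Property (iii) is the sole nontrivial input: for non-archimedean $K$ it follows from the Gromov-product identity $\log\delta_{\can}(\cS,\cS')=-\rho(\cS'',\cS_{\can})$ of Fact \ref{th:invariance}, which identifies $-\log\delta_{\can}$ with a Gromov product on the $\bR$-tree $(\sH^1,\rho)$ and thereby reduces the claimed inequality to the conditional negative-definiteness of the tree metric $\rho$ on measures of total mass $0$; for archimedean $K$ it is the classical positive-definiteness of the logarithmic kernel. In both cases it is recorded in the cited references.

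Granting (i)--(iii): since $\mathcal{P}(E)$ is nonempty and weak-$*$ compact when $E$ is compact and $I$ is upper semicontinuous, $I$ attains its supremum on $\mathcal{P}(E)$, so the $\sup$ defining $\Capa_{\can}(E)$ becomes a $\max$, attained at some $\mu=\mu_E\in\mathcal{P}(E)$. Assume in addition $\Capa_{\can}(E)>0$, i.e.\ $V:=\log\Capa_{\can}(E)=I(\mu_E)>-\infty$; then every maximizer has finite energy. For uniqueness, if $\mu_1,\mu_2$ are maximizers then $\tfrac12(\mu_1+\mu_2)\in\mathcal{P}(E)$ yields $I\bigl(\tfrac12(\mu_1+\mu_2)\bigr)=\tfrac14\bigl(I(\mu_1)+2I(\mu_1,\mu_2)+I(\mu_2)\bigr)=\tfrac12\bigl(V+I(\mu_1,\mu_2)\bigr)\le V$, hence $I(\mu_1,\mu_2)\le V$, while (iii) gives $2I(\mu_1,\mu_2)\ge I(\mu_1)+I(\mu_2)=2V$; thus $I(\mu_1,\mu_2)=V$ and the equality case of (iii) forces $\mu_1=\mu_2$. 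Write $\mu$ for this equilibrium measure and $U_{\can,\mu}$ for its chordal potential.

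It remains to show $U_{\can,\mu}\ge V$ on all of $\sP^1$; this is Frostman's theorem, carried out in two steps. \emph{Step 1 (quasi-everywhere on $E$).} Suppose $\Capa_{\can}\bigl(\{z\in E:U_{\can,\mu}(z)<V-\varepsilon\}\bigr)>0$ for some $\varepsilon>0$ and pick a probability measure $\nu$ of finite energy supported on that set. Since $I(\mu,\nu)=\int U_{\can,\mu}\,\rd\nu\le V-\varepsilon$ and $\int U_{\can,\mu}\,\rd\mu=I(\mu)=V$, the function $t\mapsto I\bigl((1-t)\mu+t\nu\bigr)$ has right derivative $2\bigl(I(\mu,\nu)-V\bigr)\le-2\varepsilon<0$ at $t=0$, so $I\bigl((1-t)\mu+t\nu\bigr)>V$ for small $t>0$ with $(1-t)\mu+t\nu\in\mathcal{P}(E)$, contradicting the maximality of $\mu$. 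Letting $\varepsilon\downarrow0$, $U_{\can,\mu}\ge V$ outside a set of capacity $0$ in $E$. \emph{Step 2 (everywhere on $\sP^1$).} The set $\{U_{\can,\mu}<V\}$ is open by the upper semicontinuity of $U_{\can,\mu}$; its intersection with $E$ is polar, and finite-energy measures do not charge polar sets, so $\mu\bigl(\{U_{\can,\mu}<V\}\bigr)=0$, whence this open set is disjoint from $\supp\mu$ and $U_{\can,\mu}\ge V$ on $\supp\mu$. Since $\Delta U_{\can,\mu}=\mu-(\cS_{\can})$ (respectively $\mu-\omega$, $\omega$ the Fubini--Study element, for archimedean $K$), the potential $U_{\can,\mu}$ is superharmonic on $\sP^1\setminus\supp\mu$, and the maximum (domination) principle for the kernel $\log\delta_{\can}$ --- Maria's theorem, cf.\ \cite[\S6]{BR10}, \cite{Brelot67} --- promotes the bound on $\supp\mu$ to $U_{\can,\mu}\ge V$ on all of $\sP^1$. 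I expect the only real subtlety to be this last promotion: one must keep careful track of the distinguished point $\cS_{\can}$ (respectively the reference measure $\omega$) occurring in $\Delta U_{\can,\mu}$ and of the harmless polar exceptional sets, but once the continuity/domination principle for $\delta_{\can}$ is in hand --- as it is in the cited references --- the argument is routine.
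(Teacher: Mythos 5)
The paper states this as a background \emph{Fact} without proof, deferring to the cited references \cite{Brelot67}, \cite{BR10}, \cite{ST97}, \cite{Tsuji59}; so there is no ``paper's own proof'' to compare against, only the same Frostman machinery you are also invoking.

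Your Step~1 contains a genuine logical error that defeats the argument. You suppose $\{z\in E:U_{\can,\mu}(z)<V-\varepsilon\}$ has positive capacity, pick a finite-energy competitor $\nu$ supported there, and correctly compute the right derivative of $t\mapsto I\bigl((1-t)\mu+t\nu\bigr)$ at $t=0$ to be $2\bigl(I(\mu,\nu)-V\bigr)\le-2\varepsilon<0$. But a strictly negative right derivative at $t=0$ means $I\bigl((1-t)\mu+t\nu\bigr)<V$ for small $t>0$, which is entirely \emph{consistent} with $\mu$ being a maximizer of $I$; you instead assert $I\bigl((1-t)\mu+t\nu\bigr)>V$ and declare a contradiction where there is none. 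Since the capacity here is defined via a supremum of the nonpositive energy $I$, the first-order condition at the maximizer reads $I(\mu,\nu)\le V$ for every admissible $\nu$, and the variational argument yields the \emph{opposite} quasi-everywhere inequality: $U_{\can,\mu}\le V$ q.e.\ on $E$ (take $\nu$ supported on $\{U_{\can,\mu}>V+\varepsilon\}$, so the derivative is $\ge 2\varepsilon>0$ and $I$ strictly increases---that is the contradiction). The conclusion $\mu\bigl(\{U_{\can,\mu}<V\}\bigr)=0$ that your Step~2 needs must then be reached differently: since $\int_{\sP^1}U_{\can,\mu}\,\rd\mu=I(\mu)=V$, since finite-energy $\mu$ does not charge polar sets, and since $U_{\can,\mu}\le V$ q.e.\ hence $\mu$-a.e., one gets $U_{\can,\mu}=V$ $\mu$-a.e.; upper semicontinuity of $U_{\can,\mu}$ makes $\{U_{\can,\mu}<V\}$ open, so this set is disjoint from $\supp\mu$. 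With that repaired, the remainder of your Step~2---promoting $U_{\can,\mu}\ge V$ on $\supp\mu$ to all of $\sP^1$ via the minimum principle for the superharmonic function $U_{\can,\mu}$ on $\sP^1\setminus\supp\mu$---is sound, as is your identification of the energy principle (iii) as the one nontrivial input and its reduction, in the non-archimedean case, to the Gromov-product identity.
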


\begin{definition}
 A Borel set $E$ in $\sP^1$ is {\itshape of capacity} $0$
 if $\Capa_{\can}(E)=0$.
\end{definition}

For example, a subset in $\sP^1$ of finite Hyllengren measure 
for an increasing sequence in $\bN$ is
of logarithmic measure $0$ under $\sd$, 
so of capacity $0$ (cf.\ \cite[\S 2]{Sodin92}).
In particular,

\begin{lemma}\label{th:polar}
 Suppose that $K$ has characteristic $0$. Then
 $E_{\wan}(f)$ in \eqref{eq:polarset} is of capacity $0$.
\end{lemma}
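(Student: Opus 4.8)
The plan is to reduce the capacity statement to a Hyllengren-measure estimate, following the blueprint already invoked in the text: a subset of $\sP^1$ of finite Hyllengren measure for an increasing sequence $(n_j)\subset\bN$ has logarithmic measure $0$ under $\sd$, hence capacity $0$ (cf.\ \cite[\S 2]{Sodin92}). So it suffices to exhibit, for the defining sequence $(d^j)$, a finite Hyllengren measure of $E_{\wan}(f)$. Recall from \eqref{eq:polarset} that
\begin{gather*}
 E_{\wan}(f)=\bigcup_{c\in C(f)_{\wan}\cap(\cJ(f)\cup\PS(f))}\bigcap_{N\in\bN}\bigcup_{j\ge N}B[f^j(c),\exp(-C_0d^j)],
\end{gather*}
with $C_0=-\min_{c\in C(f)\cap\cF(f)}\log[c,\cJ(f)]+2\sup_{\bP^1}|g_f|>0$.

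First I would fix one wandering critical point $c\in C(f)_{\wan}\cap(\cJ(f)\cup\PS(f))$ and control the single-orbit part $E_c:=\bigcap_N\bigcup_{j\ge N}B[f^j(c),\exp(-C_0d^j)]$. For each $j$, this is (at most) one chordal ball of radius $r_j:=\exp(-C_0d^j)$; the Hyllengren measure associated to the sequence $(d^j)$ weights such a ball by $\exp(d^j\cdot\log r_j)=\exp(-C_0d^{2j})$ (this is the normalization under which the covering test $\sum_j(\text{radius})^{d^j}$ becomes $\sum_j\exp(-C_0d^{2j})$). Since $C_0>0$ and $d>1$, the series $\sum_{j\ge 1}\exp(-C_0d^{2j})$ converges, and moreover $\sum_{j\ge N}\exp(-C_0d^{2j})\to 0$ as $N\to\infty$; this gives that $E_c$ has Hyllengren measure $0$ for $(d^j)$. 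Then I would take the union over the finitely many relevant $c$: by the Riemann-Hurwitz formula (valid since $K$ has characteristic $0$), $\#C(f)\le 2d-2<\infty$, so $E_{\wan}(f)$ is a finite union of sets of Hyllengren measure $0$, hence itself of finite (indeed zero) Hyllengren measure for $(d^j)$. Invoking the cited chain of implications, $E_{\wan}(f)$ is of logarithmic measure $0$ under $\sd$, and since $\sd$ dominates (is bi-Lipschitz equivalent to, on bounded pieces) the chordal kernel controlling $\Capa_{\can}$, we conclude $\Capa_{\can}(E_{\wan}(f))=0$.

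The step I expect to be the main (if modest) obstacle is the bookkeeping at the interface between the Hyllengren/logarithmic-measure formalism of \cite{Sodin92} and the chordal capacity $\Capa_{\can}$ on $\sP^1$ as defined in Fact \ref{th:chordal}: one must make sure that "finite Hyllengren measure for an increasing sequence in $\bN$" genuinely forces vanishing of $\Capa_{\can}$, i.e.\ that the potential-theoretic notion of capacity $0$ on the Berkovich line is implied by the classical covering-type smallness of the trace on $\bP^1$. This is exactly the content of the parenthetical remark preceding the lemma, so I would simply cite it; the only thing requiring care is that the defining balls in \eqref{eq:polarset} are chordal balls $B[\cdot,\cdot]$ in $\bP^1$ (radius measured by $[\cdot,\cdot]$), matching the convention under which the cited logarithmic-measure criterion is stated, and that the radii $\exp(-C_0d^j)$ decay fast enough for the sequence $(d^j)$ — which they do precisely because $C_0>0$ was built to be strictly positive.
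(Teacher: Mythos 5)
Your argument is correct, and it takes a genuinely different route from the paper's own proof. You reduce the claim to the chain of implications that the paper cites informally (a subset of $\sP^1$ of finite Hyllengren measure for an increasing sequence in $\bN$ is of logarithmic measure $0$ under $\sd$, hence of capacity $0$, cf.\ \cite[\S 2]{Sodin92}), verify that each single-orbit piece $E_c$ of $E_{\wan}(f)$ has Hyllengren measure $0$ for $(d^j)$ by the covering test $\sum_{j\ge N}\exp(-C_0d^{2j})\to 0$, and then take a finite union over the at most $2d-2$ wandering critical points. The paper instead gives what it calls a ``direct proof'': assume $\Capa_{\can}(E_{\wan}(f))>0$, take a probability measure $\mu$ supported in $E_{\wan}(f)$ whose chordal potential $U_{\can,\mu}$ is bounded below by $-C$, derive the uniform mass bound $\mu(B[z,r])\le C(\log r^{-1})^{-1}$ for $z\in E_{\wan}(f)$, and then sum over the covering balls $B[f^j(c),\cdot]$, $j\ge N$, to force $\mu(E_{\wan}(f))=0$, contradicting $\mu(E_{\wan}(f))=1$. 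Your route is shorter but outsources the key smallness-to-capacity implication to \cite{Sodin92}; the paper's route is entirely self-contained within the potential-theoretic framework of Fact \ref{th:chordal}, which is precisely why the author flags it as a ``direct proof'' alongside the Hyllengren heuristic you chose to make rigorous. Both are sound.
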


\begin{proof}
 We include a direct proof. Suppose $\Capa_{\can}(E_{\wan}(f))>0$. Then 
 there is a probability Radon measure $\mu$ with $\supp\mu\subset E_{\wan}(f)$
 such that the chordal potential $U_{\can,\mu}$ is bounded (from below) on $\sP^1$
 (Fact \ref{th:chordal}).
 Put $C:=-\inf_{z\in\sP^1}U_{\can,\mu}(z)(\ge 0)$. 
 For every $z\in E_{\wan}(f)$ and every $r\in(0,1)$,
\begin{gather*}
 (\log r)\mu(B[z,r])\ge\int_{B[z,r]}\log\delta_{\can}(z,w)\rd\mu(w)\ge -C, 
\end{gather*}
that is, $\mu(B[z,r])\le C(\log(r^{-1}))^{-1}$. Hence for every $N\in\bN$, 
\begin{multline*}
 \mu(E_{\wan}(f))
\le \sum_{c\in C(f)\cap(\cJ(f)\cup\PS(f))}\sum_{j\ge N}\mu(B[f^j(c),C_0\exp(-d^j)])\\
\le (2d-2)\sum_{j\ge N}C(d^j-\log C_0)^{-1},
\end{multline*}
so $\mu(E_{\wan}(f))=0$ (as $N\to\infty$). This contradicts that $\mu(E_{\wan}(f))=1$.
\end{proof}

Let us introduce the dynamically weighted $F$-kernel 
\begin{gather*}
 \Phi_F(z,w):=\log\delta_{\can}(z,w)-g_F(z)-g_F(w)
\end{gather*} 
on $\sP^1$, which satisfies that for every $w\in\sP^1$,
\begin{gather}
 \Delta\Phi_F(\cdot,w)=(w)-\mu_f,\label{eq:fundamental}
\end{gather}
and the comparison
\begin{gather}
 \sup_{\sP^1\times\sP^1}|\Phi_F-\log\delta_{\can}|\le 2\sup_{\sP^1}|g_F|<\infty.\label{eq:comparison}
\end{gather}
The $F$-kernel $\Phi_F$ is upper semicontinuous,
and for each Radon measure $\mu$ on $\sP^1$,
introduces the $F$-potential $U_{F,\mu}$ on $\sP^1$ and 
the $F$-energy $I_F(\mu)$ as
\begin{gather*}
U_{F,\mu}(z):=\int_{\sP^1}\Phi_F(z,w)\rd\mu(w),\\
I_F(\mu):=\int_{\sP^1}U_{F,\mu}\rd\mu=\int_{\sP^1\times\sP^1}\Phi_F\rd(\mu\times\mu)
\end{gather*}
(if exists). From (\ref{eq:fundamental}) and the Fubini theorem, 
\begin{gather}
 \Delta U_{F,\mu}=\mu-\mu(\sP^1)\cdot\mu_f.\label{eq:quasipotential}
\end{gather}

A probability Radon measure $\mu$ is called an $F$-equilibrium measure
{\itshape on} $\sP^1$ if $\mu$ maximizes the $F$-energy
among probability Radon measures on $\sP^1$ in that
\begin{gather*}
 I_F(\mu)=V_F:=\sup\{I_F(\nu):\nu\text{ is a probability Radon measure on }\sP^1\},
\end{gather*}
and $V_F$ is called the $F$-equilibrium energy of $\sP^1$.
The comparison (\ref{eq:comparison}) implies that $V_F>-\infty$,
from which there is the unique $F$-equilibrium measure on $\sP^1$
and for every probability Radon measure $\mu$ on $\sP^1$, 
\begin{gather}
 \inf_{z\in\sP^1} U_{F,\mu}(z)\le V_F\le\sup_{z\in\sP^1} U_{F,\mu}(z).\label{eq:infsup}
\end{gather}
For non-archimedean $K$, see also the continuity of $U_{F,\mu}$ 
(\cite[Lemma 5.24]{BR10}, \cite[\S 2.4]{FR09}) and the property of $\Delta$
(\cite[Proposition 8.66]{BR10}, \cite[\S2.4]{FR09}). A characterization of
$\mu_f$ as the unique solution of a Gauss variational problem is

\begin{lemma}\label{th:Frostman}
 The equilibrium measure $\mu_f$ of $f$ is the unique $F$-equilibrium measure on $\sP^1$, 
 that is, 
\begin{gather*}
 \int_{\sP^1\times\sP^1}\Phi_F\rd(\mu_f\times\mu_f)=V_F;
\end{gather*}
indeed, $U_{F,\mu_f}\equiv V_F$.
\end{lemma}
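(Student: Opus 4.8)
The plan is to exploit the single defining property $\Delta\Phi_F(\cdot,w)=(w)-\mu_f$ from \eqref{eq:fundamental} together with the maximum (Frostman-type) principle built into \eqref{eq:infsup}. First I would show directly that $U_{F,\mu_f}$ is constant on $\sP^1$. Applying \eqref{eq:quasipotential} to $\mu=\mu_f$ and using that $\mu_f$ is a probability measure gives $\Delta U_{F,\mu_f}=\mu_f-\mu_f=0$, so $U_{F,\mu_f}$ is a harmonic (in the sense of $\Delta$, i.e. $\Delta$-harmonic) function on all of $\sP^1$. Since $\sP^1$ is compact and connected and the only globally $\Delta$-harmonic functions on $\sP^1$ are the constants (this is the analogue of the maximum principle: $U_{F,\mu_f}$ attains a maximum at some point, and the local structure of $\Delta$ forces it to be locally constant, hence constant by connectedness), we conclude $U_{F,\mu_f}\equiv c$ for some constant $c\in\bR$; finiteness of $c$ follows from \eqref{eq:comparison}, which bounds $\Phi_F$ above and hence $U_{F,\mu_f}$ above, together with lower semicontinuity arguments or directly from $V_F>-\infty$.

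Next I would identify the constant and establish maximality. Integrating $U_{F,\mu_f}\equiv c$ against $\mu_f$ yields $I_F(\mu_f)=\int_{\sP^1}U_{F,\mu_f}\rd\mu_f=c$. On the other hand, \eqref{eq:infsup} applied to the probability measure $\mu=\mu_f$ gives $\inf_{z}U_{F,\mu_f}(z)\le V_F\le\sup_z U_{F,\mu_f}(z)$, and since $U_{F,\mu_f}$ is constant $\equiv c$, both the inf and the sup equal $c$, forcing $V_F=c=I_F(\mu_f)$. Thus $\mu_f$ is an $F$-equilibrium measure on $\sP^1$.

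For uniqueness I would invoke the strict concavity of the energy functional, which is already implicit in the discussion preceding the lemma (``there is the unique $F$-equilibrium measure on $\sP^1$''). Concretely: if $\nu$ is any probability Radon measure with $I_F(\nu)=V_F$, consider the signed measure $\sigma:=\nu-\mu_f$, which has total mass $0$. The energy $I_F$ is quadratic in its argument with associated bilinear form given (up to sign) by the Favre--Rivera-Letelier form $(\cdot,\cdot)_f$ of \eqref{eq:FRL}, which is positive definite on mass-zero measures because $-\Phi_F$ is (the negative of) an Arakelov Green kernel and the associated mutual energy is a genuine inner product on charges annihilated by $\Delta$-duality; hence $I_F(\nu)=I_F(\mu_f)+2\langle\text{cross term}\rangle+I_F(\sigma)$-type expansion forces $\sigma=0$. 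The main obstacle here is making the positive-definiteness rigorous in the non-archimedean Berkovich setting — one must know that the energy pairing $(\sigma,\sigma)_f$ vanishes only for $\sigma=0$ among finite signed measures of mass zero, which relies on the potential-theoretic machinery of $\Delta$ on $\sP^1$ (the references \cite{BR10}, \cite{FR06} supply this). Given that input, the three steps above assemble into the stated conclusion, and in particular the pointwise identity $U_{F,\mu_f}\equiv V_F$ is exactly what Step 1 and Step 2 deliver.
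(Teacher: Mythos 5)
Your argument is correct and follows the paper's own route exactly: use \eqref{eq:quasipotential} to get $\Delta U_{F,\mu_f}=0$, conclude $U_{F,\mu_f}$ is constant, and then pin the constant down to $V_F$ via \eqref{eq:infsup}. The uniqueness discussion in your third step is superfluous to the lemma itself (uniqueness of the $F$-equilibrium measure was already established just before the statement, as you note), but including it does no harm.
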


\begin{proof}
 From (\ref{eq:quasipotential}), $\Delta U_{F,\mu_f}=0$, so $U_{F,\mu_f}$ is constant on $\sP^1$.
 From (\ref{eq:infsup}), indeed $U_{F,\mu_f}\equiv V_F$, and $I_F(\mu_f)=V_F$.
\end{proof}

Let us introduce the more canonical $f$-kernel
\begin{gather}
 \Phi_f(z,w):=\Phi_F(z,w)-V_F\label{eq:Arakerov}
\end{gather}
on $\sP^1$, which is independent of choices of $F$ from (\ref{eq:ambiguitygreen}). 
For each Radon measure $\mu$ on $\sP^1$, its $f$-potential is
\begin{gather*}
 U_\mu:=\int_{\sP^1}\Phi_f(\cdot,w)\rd\mu(w)=U_{F,\mu}-\mu(\sP^1)V_F
\end{gather*}
on $\sP^1$. We note that
\begin{gather*}
 \Delta U_{\mu}=\Delta U_{F,\mu}=\mu-\mu(\sP^1)\cdot\mu_f,\\
 U_{\mu_f}=U_{F,\mu_f}-V_f\equiv 0
\end{gather*}
and that the dynamical Green function of $f$
\begin{gather}
 g_f(z):=g_F(z)+\frac{1}{2}V_F=\frac{1}{2}(\log\delta_{\can}(z,z)-\Phi_f(z,z)) \label{eq:comparisoncanonical}
\end{gather}
on $\sP^1$ is independent of choices of $F$.

Recall the definition of $G^F$ in (\ref{eq:escaping}). 
The {\itshape bifurcation potential} of $f$ is the constant 
\begin{gather}
 B(f):=\sum_{j=1}^{2d-2}(G^F(C^F_j)+V_F),\label{eq:bif}
\end{gather}
where $(C_j^F)\subset K^2\setminus\{0\}$
is chosen as $\det DF=\prod_{j=1}^{2d-2}(p\wedge C^F_j)$
(the Jacobian determinant of $F$), and
is independent of choices of both $F$ and $(C_j^F)$. 

For the homogeneous resultant $\Res P$
of homogeneous polynomial endomorphism $P$ on $K^2$,
see \cite[\S6]{DeMarco03}, \cite[\S2.4]{SilvermanDynamics}. 
We only mention that $P$ is non-degenerate if and only if $\Res P=0$.
For each lift $F$ of $f$, the energy formula
\begin{gather*}
 V_F=-\frac{1}{d(d-1)}\log|\Res F|
\end{gather*}
was established in \cite[Theorem 1.5]{DeMarco03} for archimedean $K$,
and in \cite[\S10.2]{BR10} generalized to non-archimedean $K$
(for a simple computation, see \cite[Appendix]{OS11}). 

\begin{lemma}\label{th:natural}
For each linear fractional isometry $h$ on $\bP^1$
under $[\cdot,\cdot]$, put $f_h:=h^{-1}\circ f\circ h$.
Then $\mu_{f_h}=h^*\mu_f$. For each $a\in\bP^1$, let us identify $h^*(a)$ with $h^{-1}(a)$.
Then $f_h^*(h^*(a))=h^*(f^*(a))$, $\eta_{h^*(a),k}(f_h)=\eta_{a,k}(f)$ 
and $D_{h^*(a),k}(f_h)=D_{a,k}(f)$.
For every $(z,w)\in\bP^1\times\bP^1$, 
\begin{gather*}
 \Phi_{f_h}(z,w)=\Phi_f(h(z),h(w)). 
\end{gather*}
For every $a\in\bP^1$ and every $k\in\bN$, 
\begin{gather*}
 \cE_{f_h}(k,h^*(a))=\cE_f(k,a). 
\end{gather*}
For each $C^1$-test function $\phi$ on $\sP^1$, so is $h^*\phi$, and it holds that
$\Lip(h^*\phi)=\Lip(\phi)$ and that $\langle h^*\phi,h^*\phi\rangle=\langle\phi,\phi\rangle$. 
\end{lemma}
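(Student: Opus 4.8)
The plan is to verify each assertion in turn, reducing everything to the defining properties of the objects involved together with the $[\cdot,\cdot]$-isometry property of $h$. First I would treat the measure-theoretic identities. Since $h$ is a linear fractional map, a lift $H\in\PGL(2,K)$ of $h$ (with $H$ chosen in $\PGL(2,\mathcal{O}_K)$ for non-archimedean $K$, respectively in $\PSU(2,K)$ for archimedean $K$, by Fact \ref{th:invariance}) gives a lift $F_h:=H^{-1}\circ F\circ H$ of $f_h$, which is again non-degenerate. From $|\cdot|\circ H=|\cdot|$ on $K^2$ (the isometry property of $h$ under $[\cdot,\cdot]$ translates to $|H p|=|p|$ up to a unit scalar, which I would normalise away) one gets $T_{F_h}=T_F\circ h$, hence $g_{F_h}=g_F\circ h$ by the defining series, and therefore $\mu_{f_h}=\Delta(g_F\circ h)+(\text{base point})=h^*(\Delta g_F+(\text{base point}))=h^*\mu_f$, using that $h$ commutes with $\Delta$ up to the normalisation of the base point (which $h$, being an isometry, fixes: $h^*(\cS_{\can})=\cS_{\can}$ in the non-archimedean case, and $h^*\omega=\omega$ in the archimedean case). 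The identity $f_h^*(h^*(a))=h^*(f^*(a))$ is immediate from $f_h=h^{-1}\circ f\circ h$ and the identification $h^*(a)=h^{-1}(a)$; taking local degrees into account (which are preserved by the conjugation) gives $\eta_{h^*(a),k}(f_h)=\eta_{a,k}(f)$ and $D_{h^*(a),k}(f_h)=D_{a,k}(f)$ by inspecting the defining sums, since $f_h^{-k}(h^{-1}(a))=h^{-1}(f^{-k}(a))$ and $\deg_{h^{-1}(w)}(f_h^k)=\deg_w(f^k)$.

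Next I would establish $\Phi_{f_h}(z,w)=\Phi_f(h(z),h(w))$. From $g_{F_h}=g_F\circ h$ and the $[\cdot,\cdot]$-isometry of $h$, which by the extension property gives $\delta_{\can}(h(z),h(w))=\delta_{\can}(z,w)$ on all of $\sP^1\times\sP^1$ (the generalized Hsia kernel being the canonical extension of $[\cdot,\cdot]$, and $h$ acting isometrically on $\sP^1$ under $\sd$ hence preserving the associated kernel — alternatively via the Gromov-product description \eqref{eq:Gromov} and the $\rho$-isometry of $h$), we get
\[
\Phi_{F_h}(z,w)=\log\delta_{\can}(z,w)-g_{F_h}(z)-g_{F_h}(w)=\log\delta_{\can}(h(z),h(w))-g_F(h(z))-g_F(h(w))=\Phi_F(h(z),h(w)).
\]
The energy $V_{F_h}$ equals $V_F$: indeed $I_{F_h}(\nu)=I_F(h_*\nu)$ for every probability measure $\nu$ by the change of variables just shown, and $\nu\mapsto h_*\nu$ is a bijection on probability Radon measures, so the suprema agree. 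Subtracting, $\Phi_{f_h}(z,w)=\Phi_{F_h}(z,w)-V_{F_h}=\Phi_F(h(z),h(w))-V_F=\Phi_f(h(z),h(w))$.

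The identity $\cE_{f_h}(k,h^*(a))=\cE_f(k,a)$ then follows by combining the previous two points: by definition \eqref{eq:errorFekete} and \eqref{eq:FRL},
\[
\cE_{f_h}(k,h^*(a))=\frac{1}{d^{2k}}\int_{\sP^1\times\sP^1\setminus\diag_{\bP^1}}\Phi_{f_h}(z,w)\,\rd\bigl((f_h^k)^*(h^*(a))\times(f_h^k)^*(h^*(a))\bigr)(z,w),
\]
and pushing forward by $h$, using $(f_h^k)^*(h^*(a))=h^*((f^k)^*(a))$, the preservation of $\diag_{\bP^1}$ under $h\times h$, and $\Phi_{f_h}(z,w)=\Phi_f(h(z),h(w))$, reproduces $\cE_f(k,a)$. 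Finally, for the $C^1$ and Dirichlet-norm statements: $h$ maps the finite sub-tree outside which $\phi$ is locally constant to another finite sub-tree (it is a homeomorphism of $\sP^1$ preserving $\sH^1$), and on $\sH^1$ it is a $\rho$-isometry, so $(h^*\phi)'=(\phi'\circ h)$ in the appropriate sense and $\langle h^*\phi,h^*\phi\rangle=\int_{\sH^1}(\phi'\circ h)^2\,\rd\rho=\int_{\sH^1}(\phi')^2\,\rd\rho=\langle\phi,\phi\rangle$ by invariance of the Hausdorff measure $\rd\rho$ under the $\rho$-isometry $h$ (cited as a Fact in the excerpt); $\Lip(h^*\phi)=\Lip(\phi)$ is immediate from the $[\cdot,\cdot]$-isometry of $h$ on $\bP^1$. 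The only point requiring genuine care — the main obstacle — is pinning down the scalar ambiguity in the lift $H$ and checking it cancels throughout (it does, because $g_{cH}$-type shifts \eqref{eq:ambiguitygreen} cancel in the kernel $\Phi_f$ which is already lift-independent, and because one may normalise $|\det H|=1$ so that $|Hp|=|p|$ exactly); everything else is a direct unwinding of definitions. $\qquad\blacksquare$
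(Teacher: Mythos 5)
Your proposal is correct and follows the same skeleton as the paper's proof (lift $H$ of $h$ normalized to preserve $|\cdot|$, derive $g_{F_h}=g_F\circ h$, conclude $\Phi_{F_h}(z,w)=\Phi_F(h(z),h(w))$, then compare $V$'s and do a change of variables for $\cE_f$), but you differ on the one step that actually requires an argument: showing $V_{F_h}=V_F$. The paper computes $\Res(F_H)=(\det H)^d\Res F$ from DeMarco's resultant identity and Silverman's exercises, and invokes the energy formula $V_F=-\tfrac{1}{d(d-1)}\log|\Res F|$; with $|\det H|=1$ this gives $V_{F_H}=V_F$ immediately. You instead argue variationally: $I_{F_h}(\nu)=I_F(h_*\nu)$ because $\Phi_{F_h}=\Phi_F\circ(h\times h)$ on all of $\sP^1\times\sP^1$, and $\nu\mapsto h_*\nu$ is a bijection on probability Radon measures, so the suprema defining $V_{F_h}$ and $V_F$ agree. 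Your route avoids the resultant machinery and is self-contained within the potential-theoretic framework already set up, at the cost of needing the $\Phi_{F_h}(z,w)=\Phi_F(h(z),h(w))$ identity on $\sP^1\times\sP^1$ rather than merely on $\bP^1\times\bP^1$; that extension does hold, as you note via the Gromov-product description and the fact that $h\in\PGL(2,\mathcal{O}_K)$ (resp.\ $\PSU(2,K)$) fixes $\cS_{\can}$ and acts as a $\rho$-isometry on $\sH^1$ — your alternative justification via $\sd$-isometry alone is shakier, since $\delta_{\can}$ and $\sd$ are genuinely different extensions of $[\cdot,\cdot]$, so stick with the Gromov-product argument. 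The rest (the "clear" facts, the $\cE_f$ change of variables, the Dirichlet norm via $\rho$-isometry) matches the paper in substance.
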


\begin{proof}
This is clear except for the displayed two equalities.
Let $F$ be a lift of $f$, and $H$ a lift of $h$ on $K^2$ normalized as $|\det H|=1$. 
Then $F_H:=H^{-1}\circ F\circ H$ is a lift of $f_h$.
Under the normalization,
$H$ also preserves $|\cdot|$ on $K^2$, so $g_{F_H}=g_F\circ h$ on $\sP^1$.
Hence for every $(z,w)\in\bP^1\times\bP^1$,
\begin{align*}
 \Phi_{F_H}(z,w)=&\log[z,w]-g_{F_H}(z)-g_{F_H}(w)\\
=&\log[h(z),h(w)]-g_F(h(z))-g_F(h(w))
=\Phi_F(h(z),h(w)). 
\end{align*}
From the formula on $\Res$ in \cite[Proposition 6.1]{DeMarco03} and
\cite[Exercises 2.12]{SilvermanDynamics},
\begin{multline*}
 \Res(F_H)=\Res(H^{-1}\circ F\circ H)=(\det H^{-1})^d\Res(F\circ H)^{1^2}\\
=(\det H)^{-d}((\Res F)^1(\det H)^{d^2})=(\det H)^d\Res F,
\end{multline*}
and since $|\det H|=1$, $|\Res(F_H)|=|\Res F|$, so $V_{F_H}=V_F$.
Hence for every $(z,w)\in\bP^1\times\bP^1$,
$\Phi_{f_h}(z,w)=\Phi_f(h(z),h(w))$. Finally, for every $a\in\bP^1$ and every $k\in\bN$, 
\begin{align*}
 &\cE_{f_h}(k,h^*(a))\\
=&\frac{1}{d^{2k}}\int_{\sP^1\times\sP^1\setminus\diag}\Phi_{f_h}(z,w)\rd
((f_h^k)^*(h^*(a))\times(f_h^k)^*(h^*(a)))(z,w)\\
=&\frac{1}{d^{2k}}\int_{\sP^1\times\sP^1\setminus\diag}\Phi_f(h(z),h(w))\rd
(h^*(f^k)^*(a)\times h^*(f^k)^*(a))(z,w)=\cE_f(k,a).
\end{align*}

Now the proof is complete.
\end{proof}

Recall the definition \eqref{eq:FRL} of dynamical Favre and Rivera-Letelier bilinear form
$(\mu,\mu')_f$ for Radon measures $\mu,\mu'$ on $\sP^1$.
A classical notion of asymptotic Fekete configuration 
on a compact subset $C$ in $\sP^1$ (see \cite{Fekete30}, \cite{Fekete33})
extends to sequences of positive measures whose supports consist of finitely
many points in $C$. Here, we are only interested in the case of $C=\sP^1$:

\begin{definition}\label{th:fekete}
 A sequence $(\nu_k)$ of positive measures whose supports consist of finitely
 many points on $\sP^1$ is 
 $f$-asymptotically Fekete (or an $f$-asymptotic Fekete configuration) 
 on $\sP^1$ if as $k\to\infty$,
 $\nu_k(\sP^1)\nearrow\infty$,
 $(\nu_k\times\nu_k)(\diag_{\bP^1})=o(\nu_k(\sP^1)^2)$ and
 \begin{gather*}
  \frac{(\nu_k,\nu_k)_f}{\nu_k(\sP^1)^2}\to 0.
 \end{gather*}
\end{definition}

\begin{remark}\label{th:exceptionalfekete}
 For every $a\in\sP^1$ and every $k\in\bN$, $((f^k)^*(a))(\sP^1)=d^k$.
 If $a\in\sH^1$, then $f^{-k}(a)\subset\sH^1$, so 
 $D_{a,k}=0$. If $a\in\bP^1$, then $D_{a,k}\le d^k\eta_{a,k}=o(d^{2k})$
 by (\ref{eq:non-exceptional}). Hence for each $a\in\sP^1\setminus E(f)$, 
 it holds that $((f^k)^*(a))$ is $f$-asymptotically Fekete on $\sP^1$ if and only if
\begin{gather*}
\cE_f(k,a)
=\frac{1}{d^{2k}}\int_{\sP^1\times\sP^1\setminus\diag_{\bP^1}}\Phi_f\rd((f^k)^*(a)\times(f^k)^*(a))\to 0
\end{gather*}
 as $k\to\infty$. For every $a\in E(f)$, $((f^k)^*(a))$ is not $f$-asymptotically Fekete on $\sP^1$
 since for each $k\in\bN$, 
 $D_{a,k}=(((f^k)^*(a))(\sP^1))^2$, so the second condition does not hold.
\end{remark}

\section{Proof of Theorems \ref{th:algebraic}, \ref{th:characterization}, \ref{th:order} and \ref{th:derived}}
\label{eq:proof}
Let $f$ be a rational function on $\bP^1=\bP^1(K)$ of degree $d>1$. 

\begin{lemma}[{cf.\ \cite[Theorem 10.18]{BR10}, \cite[Lemma 1.2]{Sodin92}}]\label{th:Riesz}
 For every $a\in\sP^1$ and every $z\in\sP^1$,
 \begin{gather}
  \Phi_f(f(z),a)=U_{f^*(a)}(z).\label{eq:Riesz}
 \end{gather}
\end{lemma}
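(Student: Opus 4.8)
The plan is to fix $a\in\sP^1$, regard both sides of \eqref{eq:Riesz} as functions of $z\in\sP^1$, show that these two functions have the same Laplacian on $\sP^1$, hence differ by a constant, and then identify that constant as $0$ by integrating against the balanced invariant measure $\mu_f$.

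\textbf{The two Laplacians.} Since $f^*(a)$ is a positive measure of total mass $d$, \eqref{eq:quasipotential} (together with the constant shift $U_{\mu}=U_{F,\mu}-\mu(\sP^1)V_F$) gives $\Delta_z U_{f^*(a)}=f^*(a)-d\,\mu_f$. On the other hand, writing $\Phi_f(\cdot,a)=\Phi_F(\cdot,a)-V_F$ and using the functoriality $\Delta(g\circ f)=f^*(\Delta g)$ of the Laplacian under pullback by $f$, together with \eqref{eq:fundamental} and $f^*\mu_f=d\,\mu_f$, one computes $\Delta_z\bigl(\Phi_f(f(z),a)\bigr)=f^*\bigl((a)-\mu_f\bigr)=f^*(a)-d\,\mu_f$. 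Hence $h(z):=\Phi_f(f(z),a)-U_{f^*(a)}(z)$ satisfies $\Delta h\equiv 0$ on $\sP^1$. To deduce that $h$ is constant, I would observe that both $\Phi_f(f(\cdot),a)$ and $U_{f^*(a)}$ take the value $-\infty$ exactly on the finite set $f^{-1}(a)\cap\bP^1$, and that near each such $w_0$ the singular part of each of the two terms equals $(\deg_{w_0}f)\log[\,\cdot\,,w_0]$ up to a term bounded near $w_0$ --- for the first term because $\delta_{\can}(f(z),a)$ vanishes to order $\deg_{w_0}f$ as $z\to w_0$, for the second because $w_0$ carries weight $\deg_{w_0}f$ in $f^*(a)$. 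So these poles cancel, $h$ extends to a bounded continuous function on the connected space $\sP^1$ with $\Delta h\equiv 0$, and therefore $h\equiv c(a)$ for some constant $c(a)\in\bR$. (When $a\in\sH^1$ there are no singularities at all, and this step is immediate.)

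\textbf{Identifying the constant.} I would integrate $h$ against $\mu_f$, using $U_{\mu_f}\equiv 0$ (Lemma \ref{th:Frostman} and \eqref{eq:Arakerov}), the symmetry of $\Phi_f$, and $f_*\mu_f=\mu_f$. Since $\mu_f$ has no atom in $\bP^1$ and $\Phi_f$ is bounded above by \eqref{eq:comparison}, Tonelli's theorem applies to $\sup\Phi_f-\Phi_f\ge 0$ on $\sP^1\times\sP^1$ with the measure $\mu_f\times f^*(a)$ and yields
\[
\int_{\sP^1}U_{f^*(a)}(z)\,\rd\mu_f(z)=\int_{\sP^1}\Bigl(\int_{\sP^1}\Phi_f(z,w)\,\rd\mu_f(z)\Bigr)\rd(f^*(a))(w)=\int_{\sP^1}U_{\mu_f}(w)\,\rd(f^*(a))(w)=0,
\]
while the change of variables $z\mapsto f(z)$ with $f_*\mu_f=\mu_f$ gives
\[
\int_{\sP^1}\Phi_f(f(z),a)\,\rd\mu_f(z)=\int_{\sP^1}\Phi_f(w,a)\,\rd\mu_f(w)=U_{\mu_f}(a)=0.
\]
Subtracting, $c(a)=\int_{\sP^1}h\,\rd\mu_f=0$, which is \eqref{eq:Riesz}.

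The step I expect to be the main obstacle is the constancy argument: making rigorous that the cancellation of the logarithmic poles at $f^{-1}(a)\cap\bP^1$ really produces a bounded solution of $\Delta h=0$ (so that compactness and connectedness of $\sP^1$ force $h$ to be constant), together with the attendant bookkeeping --- the validity of $\Delta(g\circ f)=f^*\Delta g$ for the potential $\Phi_f(\cdot,a)$, and of Fubini--Tonelli and the change-of-variables formula for the $[-\infty,\infty)$-valued, bounded-above kernel $\Phi_f$. All of these are standard in the potential theory recalled in Section \ref{sec:facts} and in \cite{BR10}, so the proof reduces to citing them at the right places.
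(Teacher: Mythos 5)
Your proposal is correct and follows essentially the same route as the paper's proof: compute both Laplacians, use $\Delta\Phi_f(f(\cdot),a)=f^*(\Delta\Phi_f(\cdot,a))=f^*(a)-d\mu_f=\Delta U_{f^*(a)}$ to conclude the difference is harmonic hence constant, and then integrate against $\mu_f$ using $f_*\mu_f=\mu_f$ and $U_{\mu_f}\equiv 0$ to identify the constant as $0$. The only difference is that you spell out in more detail the cancellation of logarithmic poles at $f^{-1}(a)\cap\bP^1$ and the Fubini--Tonelli justification, which the paper leaves implicit.
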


\begin{proof}
 For every $a\in\sP^1$, from $f^*\mu_f=d\cdot\mu_f$ and $(f^*(a))(\sP^1)=d$, 
 \begin{gather*}
 \Delta\Phi_f(f(\cdot),a)
=f^*(\Delta\Phi_f(\cdot,a))
=f^*((a)-\mu_f)
=f^*(a)-d\cdot\mu_f
=\Delta U_{f^*(a)}
 \end{gather*}
 on $\sP^1$. Hence $\Phi_f(f(\cdot),a)-U_{f^*(a)}$ is 
 harmonic, so constant, on $\sP^1$.
 Let us integrate this constant function in
 $\rd\mu_f(\cdot)$ over $\sP^1$.
 Then from $f_*\mu_f=\mu_f$ and $U_{\mu_f}\equiv 0$,
 \begin{align*}
 \Phi_f(f(\cdot),a)-U_{f^*(a)}\equiv\int_{\sP^1}(\Phi_F(f(\cdot),a)-U_{f^*(a)})\rd\mu_f=0
 \end{align*}
 on $\sP^1$, which is (\ref{eq:Riesz}).
\end{proof}

The estimate $(\ref{eq:non-classical})$ in Theorem \ref{th:algebraic} follows from
\begin{lemma}
For every $a\in\sH^1$ and every $k\in\bN$,
$\cE_f(k,a)=\Phi_f(a,a)/d^k$.
Moreover, $\sup_{a\in\sP^1}|\Phi_f(a,a)+\rho(a,\cS_{\can})|<\infty$.
\end{lemma}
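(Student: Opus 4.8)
The plan is to establish the two assertions separately; both concern non-archimedean $K$, since for archimedean $K$ one has $\sH^1=\emptyset$ and there is nothing to prove. For the identity $\cE_f(k,a)=\Phi_f(a,a)/d^k$, I would begin from the fact, recorded in Remark \ref{th:exceptionalfekete}, that $a\in\sH^1$ forces $f^{-k}(a)\subset\sH^1$. Hence the atomic measure $(f^k)^*(a)$ is supported in $\sH^1$; since $\{\Phi_f=-\infty\}=\diag_{\bP^1}\subset\bP^1\times\bP^1$ is disjoint from $\sH^1\times\sH^1$, and since $(f^k)^*(a)\times(f^k)^*(a)$ gives no mass to $\diag_{\bP^1}$, the off-diagonal integral in \eqref{eq:FRL} is in fact a finite sum over $f^{-k}(a)\times f^{-k}(a)$, so that by \eqref{eq:FRL} and \eqref{eq:errorFekete}
\[
 d^{2k}\,\cE_f(k,a)=\int_{\sP^1\times\sP^1}\Phi_f\,\rd\bigl((f^k)^*(a)\times(f^k)^*(a)\bigr)
 =\int_{\sP^1}U_{(f^k)^*(a)}\,\rd\bigl((f^k)^*(a)\bigr).
\]

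The crucial input is the $k$-fold analogue of Lemma \ref{th:Riesz}, namely $U_{(f^k)^*(a)}(z)=\Phi_f(f^k(z),a)$ for every $z\in\sP^1$. I would prove this by repeating the proof of Lemma \ref{th:Riesz} verbatim with $f$ replaced by $f^k$, using $\Delta\circ(f^k)^*=(f^k)^*\circ\Delta$, the balancedness $(f^k)^*\mu_f=d^k\mu_f$ and $(f^k)_*\mu_f=\mu_f$, and $U_{\mu_f}\equiv0$ (alternatively, iterate Lemma \ref{th:Riesz} itself together with the identity $U_\nu(f(\cdot))=U_{f^*\nu}(\cdot)$ obtained from it by linearity in the measure). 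Substituting this into the display above and noting that on $\supp((f^k)^*(a))\subset f^{-k}(a)$ the integrand $\Phi_f(f^k(z),a)$ is the constant $\Phi_f(a,a)$, while $\bigl((f^k)^*(a)\bigr)(\sP^1)=d^k$, gives $d^{2k}\cE_f(k,a)=d^k\Phi_f(a,a)$, which is the claim.

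For the bound $\sup_{a\in\sP^1}|\Phi_f(a,a)+\rho(a,\cS_{\can})|<\infty$, I would simply unwind the definitions. From $\Phi_f=\Phi_F-V_F$ and $\Phi_F(z,w)=\log\delta_{\can}(z,w)-g_F(z)-g_F(w)$ one has $\Phi_f(a,a)=\log\delta_{\can}(a,a)-2g_F(a)-V_F$. Applying the Gromov-product identity \eqref{eq:Gromov} with $\cS=\cS'=a$ — the branch point $\cS''$ of the degenerate tripod $\{a,a,\cS_{\can}\}$ being $a$ itself — yields $\log\delta_{\can}(a,a)=-\rho(a,\cS_{\can})$ for $a\in\sH^1$ (and, with the convention $\rho(\cdot,\cS_{\can}):=+\infty$ on $\bP^1$, on all of $\sP^1$, both sides being $-\infty$ there, so that $\Phi_f(a,a)+\rho(a,\cS_{\can})$ is unambiguously defined everywhere). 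Therefore $\Phi_f(a,a)+\rho(a,\cS_{\can})=-2g_F(a)-V_F$ for every $a\in\sP^1$, and the right-hand side is bounded in absolute value by $2\sup_{\sP^1}|g_F|+|V_F|<\infty$, the finiteness of $\sup_{\sP^1}|g_F|$ being part of \eqref{eq:comparison} (indeed $g_F$ is the uniform limit of continuous functions on the compact space $\sP^1$).

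The argument is essentially routine once Lemma \ref{th:Riesz} is available; the only point requiring care is the step $U_{(f^k)^*(a)}(z)=\Phi_f(f^k(z),a)$ and, alongside it, checking that because $a\in\sH^1$ keeps every relevant point off $\bP^1$, all the integrals in sight avoid the locus $\{\Phi_f=-\infty\}=\diag_{\bP^1}$ and are genuinely finite, so that no $\infty-\infty$ cancellation is being swept under the rug.
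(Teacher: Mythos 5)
Your argument is correct and essentially the same as the paper's. The paper reduces to $k=1$ ``without loss of generality'' by tacitly replacing $f$ with $f^k$ (using that $\Phi_{f^k}=\Phi_f$ since both are built from the same $g_f$ and $\delta_{\can}$), then integrates the $k=1$ Riesz identity against $\rd(f^*(a))/d^2$; you instead state the $k$-fold Riesz identity $U_{(f^k)^*(a)}(z)=\Phi_f(f^k(z),a)$ explicitly and integrate once, which amounts to the same computation spelled out. Both hinge on $a\in\sH^1\Rightarrow f^{-k}(a)\subset\sH^1$ so the diagonal is missed and no $\infty-\infty$ occurs, which you correctly flag. For the second assertion you and the paper both use the Gromov-product identity \eqref{eq:Gromov} at $\cS=\cS'=a$ to get $\log\delta_{\can}(a,a)=-\rho(a,\cS_{\can})$ and conclude $\Phi_f(a,a)+\rho(a,\cS_{\can})=-2g_f(a)$ (your $-2g_F(a)-V_F$ is the same quantity via \eqref{eq:comparisoncanonical}); your explicit remark about the convention on $\bP^1$, where both $\log\delta_{\can}(a,a)$ and $-\rho(a,\cS_{\can})$ are $-\infty$, is a fair reading of the paper's otherwise implicit treatment of the supremum over all of $\sP^1$.
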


\begin{proof}
 We assume $k=1$ without loss of generality. Since $f^{-1}(a)\subset\sH^1$,
 integrating (\ref{eq:Riesz}) in $\rd(f^*(a))(z)/d^2$ on $\sP^1$,
 \begin{multline*}
  \frac{1}{d}\Phi_f(a,a)
  =\frac{1}{d^2}\int_{\sP^1\times\sP^1}\Phi_f\rd(f^*(a)\times f^*(a))\\
  =\frac{1}{d^2}\int_{\sP^1\times\sP^1\setminus\diag_{\bP^1}}\Phi_f\rd(f^*(a)\times f^*(a))
  =\cE_f(1,a).
 \end{multline*}
By (\ref{eq:Gromov}), $\rho(a,\cS_{\can})=-\log\delta_{\can}(a,a)$, so
$\sup_{a\in\sP^1}|\Phi_f(a,a)+\rho(a,\cS_{\can})|
=\sup_{a\in\sP^1}|-2g_f(a)|<\infty$.
\end{proof}

From now on, let us assume that $K$ has characteristic $0$.

\begin{definition}
 For every $z\in\bP^1$, put
 \begin{gather}
  c_z(f):=\lim_{\bP^1\ni u\to z}\left\{\Phi_f(f(u),f(z))-(\deg_z f)\Phi_f(u,z)\right\}.\label{eq:defquasiRiesz}
 \end{gather}
\end{definition}

\begin{lemma}\label{th:derivative}
 For every $a\in\bP^1$ and every $z\in f^{-1}(a)$, 
\begin{gather}
 c_z(f)=\int_{\sP^1\setminus\{z\}}\Phi_f(z,w)\rd(f^*(a))(w).\label{eq:quasiRiesz}
\end{gather}
\end{lemma}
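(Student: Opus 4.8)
The plan is to unwind the definition \eqref{eq:defquasiRiesz} of $c_z(f)$ using the Riesz-type identity \eqref{eq:Riesz} from Lemma \ref{th:Riesz} and the fact that, near a point $z$ of local degree $m := \deg_z f$, the $f$-kernel $\Phi_f(u,z)$ behaves like $(1/m)\log\delta_{\can}(f(u),f(z))$ plus a continuous term (because $\log\delta_{\can}(\cdot,\cdot)$ and $\log\delta_\infty(\cdot,\cdot)$ differ by a continuous function, and $f$ in local coordinates looks like $u\mapsto u^m$ up to units). First I would fix $a\in\bP^1$ and $z\in f^{-1}(a)$, so that $f(z)=a$, and write $f^*(a)=\sum_{w\in f^{-1}(a)}(\deg_w f)\cdot(w)$; the term $w=z$ contributes with weight $m$. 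Then Lemma \ref{th:Riesz} gives $\Phi_f(f(u),a)=U_{f^*(a)}(u)=\sum_{w\in f^{-1}(a)}(\deg_w f)\,\Phi_f(u,w)$ for every $u\in\sP^1$, and I would split off the diagonal term to get
\begin{gather*}
 \Phi_f(f(u),a) = m\,\Phi_f(u,z) + \int_{\sP^1\setminus\{z\}}\Phi_f(u,w)\rd(f^*(a))(w).
\end{gather*}
Rearranging, $\Phi_f(f(u),f(z)) - m\,\Phi_f(u,z) = \int_{\sP^1\setminus\{z\}}\Phi_f(u,w)\rd(f^*(a))(w)$, and the right-hand side is a finite sum of terms $(\deg_w f)\Phi_f(u,w)$ with $w\neq z$, each of which is continuous in $u$ at $u=z$ (since $\Phi_f(z,w)$ is finite off the diagonal). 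Taking $u\to z$ along $\bP^1$ therefore yields exactly \eqref{eq:quasiRiesz}.

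The one point requiring care is that the left-hand side $\Phi_f(f(u),f(z)) - m\,\Phi_f(u,z)$ is an indeterminate $-\infty - (-\infty)$ type expression as $u\to z$, so the identity must be read at the level of the \emph{difference}: I would justify that the displayed equation holds for all $u$ in a punctured neighborhood of $z$ in $\bP^1$ (where both $\Phi_f(f(u),a)$ and $\Phi_f(u,z)$ are real-valued), and that the limit of the right-hand side exists and is finite because it is a finite linear combination of $\Phi_f(\cdot,w)$ with $w\neq z$, each continuous at $z$. This also shows a posteriori that the limit defining $c_z(f)$ in \eqref{eq:defquasiRiesz} genuinely exists, which is part of what the lemma asserts.

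The main obstacle is purely bookkeeping: one must be sure that when $a$ has several preimages, no other preimage $w$ coincides with $z$, and that the decomposition $U_{f^*(a)}(u) = m\Phi_f(u,z) + (\text{rest})$ is legitimate as an equality of functions of $u$ on all of $\sP^1$ (not merely off the support of $f^*(a)$), which follows directly from the definition $U_{\mu}(u)=\int\Phi_f(u,w)\rd\mu(w)$ with $\mu = f^*(a)$ since $\Phi_f$ is upper semicontinuous with values in $[-\infty,+\infty)$ and the measure is a finite sum of point masses. No potential-theoretic input beyond Lemma \ref{th:Riesz} is needed; everything else is elementary manipulation.
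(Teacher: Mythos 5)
Your argument is correct and is essentially the paper's proof: both apply Lemma \ref{th:Riesz} to rewrite $\Phi_f(f(u),a)$ as $U_{f^*(a)}(u)$, subtract the $(\deg_z f)\Phi_f(u,z)$ term to cancel the $w=z$ contribution, and let $u\to z$ along $\bP^1$, using continuity of $\Phi_f(\cdot,w)$ at $z$ for $w\ne z$. The opening heuristic about $f$ looking locally like $u\mapsto u^m$ is not actually needed (you never invoke it), but the rest matches the paper's reasoning step for step, with the extra bookkeeping about the $-\infty-(-\infty)$ indeterminacy made explicit rather than tacit.
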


\begin{proof}
 For every $a\in\bP^1$ and every $z\in f^{-1}(a)$, from Lemma \ref{th:Riesz},
 for every $u\in\bP^1\setminus\{z\}$,
 \begin{multline*}
  \Phi_f(f(u),f(z))-(\deg_z f)\Phi_f(u,z)
  =\Phi_f(f(u),a)-(\deg_z f)\Phi_f(u,z)\\
  =U_{f^*(a)}(u)-(\deg_z f)\Phi_f(u,z)
  =\int_{\sP^1\setminus\{z\}}\Phi_f(u,w)\rd(f^*(a))(w),
 \end{multline*}
 and take $\lim_{\bP^1\ni u\to z}$ of both sides.
\end{proof}

\begin{lemma}\label{th:error}
 For every $a\in\bP^1$ and every $k\in\bN$,
 \begin{align}
  \cE_f(k,a)=\frac{1}{d^{2k}}\int_{\sP^1}c_z(f^k)\rd((f^k)^*(a))(z).\label{eq:error}
 \end{align}
\end{lemma}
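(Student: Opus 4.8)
The plan is to express $\cE_f(k,a)$ directly from its definition and then integrate off the diagonal by decomposing the double integral fiberwise using the pointwise identity \eqref{eq:quasiRiesz} applied to the iterate $f^k$. Since $f^k$ is a rational function of degree $d^k$ on $\bP^1$ with $(f^k)^*(a)$ its averaged pullback, Lemma \ref{th:derivative} applies verbatim with $f$ replaced by $f^k$ and $d$ by $d^k$: for every $a\in\bP^1$ and every $z\in(f^k)^{-1}(a)$,
\begin{gather*}
 c_z(f^k)=\int_{\sP^1\setminus\{z\}}\Phi_f(z,w)\rd((f^k)^*(a))(w).
\end{gather*}
First I would write, using \eqref{eq:errorFekete} and \eqref{eq:FRL},
\begin{gather*}
 \cE_f(k,a)=\frac{1}{d^{2k}}\int_{\sP^1\times\sP^1\setminus\diag_{\bP^1}}\Phi_f(z,w)\rd\bigl((f^k)^*(a)\times(f^k)^*(a)\bigr)(z,w).
\end{gather*}

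Next, since $(f^k)^*(a)$ is supported on the finite set $(f^k)^{-1}(a)$, the double integral over $\sP^1\times\sP^1\setminus\diag_{\bP^1}$ is a finite sum over pairs $(z,w)$ with $z,w\in(f^k)^{-1}(a)$ and $z\neq w$ (note $\Phi_f(z,w)$ is finite there even when $a\in\bP^1$, since distinct preimages are distinct points of $\bP^1$ and $\Phi_f=-\infty$ only on $\diag_{\bP^1}$). Grouping this sum by the first variable $z$, the inner sum over $w\neq z$ is exactly $\int_{\sP^1\setminus\{z\}}\Phi_f(z,w)\rd((f^k)^*(a))(w)$, which equals $c_z(f^k)$ by the displayed consequence of Lemma \ref{th:derivative}. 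Summing the weighted contributions over $z$ then yields
\begin{gather*}
 \cE_f(k,a)=\frac{1}{d^{2k}}\int_{\sP^1}c_z(f^k)\rd\bigl((f^k)^*(a)\bigr)(z),
\end{gather*}
which is \eqref{eq:error}.

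The one point requiring a little care — the main (mild) obstacle — is the passage from the integral over $\sP^1\times\sP^1\setminus\diag_{\bP^1}$ to the fiberwise sum: one must observe that removing $\diag_{\bP^1}$ removes precisely the diagonal pairs $z=w$ from the support, that no mass of $(f^k)^*(a)\times(f^k)^*(a)$ sits on $(\diag_{\sP^1}\setminus\diag_{\bP^1})$ (indeed $(f^k)^*(a)$ is atomic, supported on $(f^k)^{-1}(a)$, which lies in $\bP^1$ when $a\in\bP^1$ and in $\sH^1$ when $a\in\sH^1$, so in either case the product measure gives $\diag_{\sP^1}$ the mass $D_{a,k}$ concentrated on $\diag_{\bP^1}$), and that all the remaining off-diagonal terms are finite so the Fubini-type interchange of the (finite) double sum into an iterated one is legitimate. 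With these observations the identity is immediate.
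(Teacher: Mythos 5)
Your proof is correct and takes essentially the same route as the paper's: the paper reduces to $k=1$ ``without loss of generality'' (implicitly using that $\Phi_{f^k}=\Phi_f$, $\mu_{f^k}=\mu_f$, and $\cE_{f^k}(1,a)=\cE_f(k,a)$) and then integrates \eqref{eq:quasiRiesz} in $\rd(f^*(a))(z)/d^2$ and applies Fubini to the finite sum, which is precisely the calculation you carry out directly with $f^k$. Your explicit remark that Lemma \ref{th:derivative} applies ``verbatim'' to $f^k$ plays exactly the role of the paper's WLOG step, and your careful bookkeeping of which diagonal is removed (only needed for $a\in\bP^1$, as the statement requires) is a harmless elaboration of what the paper leaves implicit.
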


\begin{proof}
 With no loss of generality, we assume that $k=1$.
 Integrating (\ref{eq:quasiRiesz}) in $\rd(f^*(a))(z)/d^2$,
\begin{multline*}
 \int_{\sP^1}c_z(f)\frac{\rd(f^*(a))(z)}{d^2}
=\frac{1}{d^2}\int_{\sP^1}\rd(f^*(a))(z)\int_{\sP^1\setminus\{z\}}\Phi_f(z,w)\rd(f^*(a))(w)\\
=\frac{1}{d^2}\int_{\sP^1\times\sP^1\setminus\diag_{\bP^1}}\Phi_f\rd(f^*(a)\times f^*(a))=\cE_f(1,a).
\end{multline*}
\end{proof}

Put $f^0:=\Id_{\bP^1}$. Recall that $\deg_z f$ is multiplicative in that
for every $z\in\bP^1$ and every $m,n\in\bN\cup\{0\}$,
\begin{gather*}
 \deg_z(f^{m+n})=\deg_{f^n(z)}(f^m)\deg_z(f^n).
\end{gather*}

\begin{lemma}\label{th:chain}
 For every $k\in\bN$ and every $z\in\bP^1$,
 \begin{gather}
  c_z(f^k)=\sum_{j=1}^k\deg_{f^j(z)}(f^{k-j})c_{f^{j-1}(z)}(f).\label{eq:chain}
 \end{gather}
\end{lemma}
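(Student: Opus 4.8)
The plan is to prove \eqref{eq:chain} by induction on $k$, using the cocycle-type relation that the quantities $c_z(f^k)$ satisfy under composition. The base case $k=1$ is the definition \eqref{eq:defquasiRiesz}. For the inductive step, I would first establish a two-step composition identity: for maps expressed as $f^k=f^{k-1}\circ f$, the defining limit \eqref{eq:defquasiRiesz} for $c_z(f^k)$ can be split by inserting the intermediate point $f(u)$ near $f(z)$, writing
\begin{gather*}
 \Phi_f(f^k(u),f^k(z))-(\deg_z f^k)\Phi_f(u,z)
 =\bigl(\Phi_f(f^{k-1}(f(u)),f^{k-1}(f(z)))-(\deg_{f(z)}f^{k-1})\Phi_f(f(u),f(z))\bigr)\\
 +(\deg_{f(z)}f^{k-1})\bigl(\Phi_f(f(u),f(z))-(\deg_z f)\Phi_f(u,z)\bigr),
\end{gather*}
where I have used the multiplicativity $\deg_z f^k=(\deg_{f(z)}f^{k-1})(\deg_z f)$. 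Taking $\lim_{\bP^1\ni u\to z}$, the first bracket tends to $c_{f(z)}(f^{k-1})$ (since $f(u)\to f(z)$ in $\bP^1$ and $f$ is continuous), and the second bracket tends to $c_z(f)$, yielding the cocycle relation
\begin{gather*}
 c_z(f^k)=c_{f(z)}(f^{k-1})+(\deg_{f(z)}f^{k-1})\,c_z(f).
\end{gather*}

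Next I would feed the induction hypothesis — namely \eqref{eq:chain} applied to $f^{k-1}$ and the point $f(z)$ — into the cocycle relation. This gives
\begin{gather*}
 c_z(f^k)=\sum_{i=1}^{k-1}\deg_{f^i(f(z))}(f^{k-1-i})\,c_{f^{i-1}(f(z))}(f)+(\deg_{f(z)}f^{k-1})\,c_z(f).
\end{gather*}
Reindexing the sum by $j=i+1$ (so $f^{i-1}(f(z))=f^{j-1}(z)$, $f^i(f(z))=f^j(z)$, and $k-1-i=k-j$), the first sum becomes $\sum_{j=2}^{k}\deg_{f^j(z)}(f^{k-j})\,c_{f^{j-1}(z)}(f)$, and the last term is exactly the $j=1$ term of \eqref{eq:chain} once one notes $\deg_{f(z)}f^{k-1}=\deg_{f^1(z)}(f^{k-1})$. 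Combining the two pieces recovers \eqref{eq:chain} for $k$, completing the induction.

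The only genuine subtlety is justifying that the two limits exist separately and that the first bracket really converges to $c_{f(z)}(f^{k-1})$ rather than to something involving a different base point: one must check that as $u\to z$ in $\bP^1$, the point $f(u)$ approaches $f(z)$ in $\bP^1$ in the sense required by the defining limit \eqref{eq:defquasiRiesz} for $f^{k-1}$ at $f(z)$. This follows from continuity of $f$ on $\bP^1$ together with the fact that $c_{f(z)}(f^{k-1})$ is defined as a genuine limit along $\bP^1$, so the existence of $c_z(f^k)$ and $c_z(f)$ (via the base case and the already-established formula \eqref{eq:quasiRiesz} in Lemma \ref{th:derivative}, which shows these limits are finite) forces the middle limit to exist as well. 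I expect this bookkeeping with the intermediate point and the multiplicativity of local degrees to be the main place where care is needed; the algebra of the reindexing is routine.
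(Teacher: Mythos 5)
Your proof is correct, and it rests on the same core idea as the paper's: split the defining limit by inserting an intermediate comparison term, then use the multiplicativity $\deg_z(f^k)=\deg_{f(z)}(f^{k-1})\deg_z(f)$ to factor one bracket out. The difference is which iterate you peel off. You decompose $f^k=f^{k-1}\circ f$ (peel off the \emph{first} application of $f$), obtaining the cocycle relation $c_z(f^k)=c_{f(z)}(f^{k-1})+\deg_{f(z)}(f^{k-1})\,c_z(f)$, and then close the argument by induction on $k$ together with a reindexing. The paper instead decomposes $f^j=f\circ f^{j-1}$ (peel off the \emph{last} application), obtaining the mirror-image relation $c_z(f^j)=c_{f^{j-1}(z)}(f)+\deg_{f^{j-1}(z)}(f)\,c_z(f^{j-1})$, divides by $\deg_z(f^j)$, and telescopes the resulting differences over $j=2,\dots,k$. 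The two routes are of the same length and difficulty; neither has a real advantage, and the induction-plus-reindexing you do is a direct counterpart of the paper's telescoping sum. The subtlety you flag — that the limit of the first bracket along the path $v=f(u)$ as $u\to z$ coincides with the unrestricted limit $v\to f(z)$ in \eqref{eq:defquasiRiesz}, justified because $f$ is open and nonconstant so $f(u)\to f(z)$ with $f(u)\neq f(z)$ for $u$ near but not equal to $z$, and because the defining limit exists and is finite by Lemmas \ref{th:derivative} and \ref{th:wronskian} — is a fair concern and correctly resolved; the paper's proof relies on the identical observation (with $f^{j-1}(u)$ in place of $f(u)$) without spelling it out.
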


\begin{proof}
This is clear if $k=1$. Suppose that $k\ge 2$. For every $z\in\bP^1$ and every $j\in\bN$,
\begin{multline*}
c_z(f^j)\\
=\lim_{\bP^1\ni u\to z}
\bigl\{[\Phi_f(f(f^{j-1}(u)),f(f^{j-1}(z)))
-\deg_{f^{j-1}(z)}(f)\Phi_f(f^{j-1}(u),f^{j-1}(z))]\\
+\deg_{f^{j-1}(z)}(f)[\Phi_f(f^{j-1}(u),f^{j-1}(z))-\deg_z(f^{j-1})\Phi_f(u,z)]\bigr\}\\
=c_{f^{j-1}(z)}(f)+\deg_{f^{j-1}(z)}(f)c_z(f^{j-1}),
\end{multline*}
so
$c_z(f^j)/\deg_z(f^j)-c_z(f^{j-1})/\deg_z(f^{j-1})
=c_{f^{j-1}(z)}(f)/\deg_z(f^j)$.
Taking the sum of this over $j=2,\dots,k$,
\begin{gather*}
 \frac{c_z(f^k)}{\deg_z(f^k)}=\sum_{j=1}^k\frac{c_{f^{j-1}(z)}(f)}{\deg_z(f^j)},
\end{gather*}
which is equivalent to \eqref{eq:chain}.
\end{proof}

\begin{lemma}\label{th:wronskian}
For every $z\in\bP^1\setminus C(f)$,
\begin{gather}
 c_z(f)=-\log|d|+B(f)+\sum_{c\in C(f)}\Phi_f(z,c).\label{eq:derivativeNevan}
\end{gather}
Here the sum takes into account the multiplicity $(\deg_c f-1)$ 
of each $c\in C(f)$. 
\end{lemma}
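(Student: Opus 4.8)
Lemma \ref{th:wronskian} asks for an explicit formula for $c_z(f)$ at non-critical points $z$, in terms of the bifurcation constant $B(f)$ and the $f$-kernel evaluated against the critical points. Here is my plan.

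\textbf{Proof plan.} Since $z\in\bP^1\setminus C(f)$ forces $\deg_z f=1$, the defining limit \eqref{eq:defquasiRiesz} reduces to $c_z(f)=\lim_{\bP^1\ni u\to z}\{\Phi_f(f(u),f(z))-\Phi_f(u,z)\}$, a limit taken entirely within $\bP^1\times\bP^1$, where $\delta_{\can}$ agrees with the chordal kernel $[\cdot,\cdot]$. So the whole computation can be carried out with a single fixed lift $F$ of $f$ and reduces to bookkeeping with homogeneous coordinates. First I would rewrite $\Phi_f$ in terms of the escaping rate function $G^F$: using $\Phi_f(z,w)=\log\delta_{\can}(z,w)-g_f(z)-g_f(w)$ from \eqref{eq:comparisoncanonical}, the relations $g_f=g_F+\tfrac12 V_F$ and $g_F\circ\pi=G^F-\log|\cdot|$ from \eqref{eq:escaping}, the homogeneity $G^F\circ F=d\cdot G^F$, and $[z,w]=|p\wedge q|/(|p||q|)$, one obtains for $p\in\pi^{-1}(u)$, $q\in\pi^{-1}(z)$ the identity
\[
\Phi_f(f(u),f(z))-\Phi_f(u,z)=\log\frac{|F(p)\wedge F(q)|}{|p\wedge q|}-(d-1)\bigl(G^F(p)+G^F(q)\bigr).
\]

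The next key step is a Wronskian-type identity. The bihomogeneous form $(p,q)\mapsto F(p)\wedge F(q)$ vanishes on $\{p\wedge q=0\}$, so $F(p)\wedge F(q)=(p\wedge q)\,\Psi(p,q)$ for a symmetric bihomogeneous $\Psi$ of bidegree $(d-1,d-1)$; Euler's relation for the degree-$d$ homogeneous coordinates of $F$ identifies the diagonal value as $\Psi(q,q)=\tfrac1d\det DF(q)=\tfrac1d\prod_{j=1}^{2d-2}(q\wedge C_j^F)$, with the $C_j^F$ exactly as chosen in \eqref{eq:bif}. Letting $u\to z$, hence $p\to q$, and using continuity of $G^F$, the displayed identity above yields
\[
c_z(f)=\log|\Psi(q,q)|-2(d-1)G^F(q)=-\log|d|+\sum_{j=1}^{2d-2}\log|q\wedge C_j^F|-2(d-1)G^F(q).
\]
Finally, I would re-express each $\log|q\wedge C_j^F|$ back through $\Phi_f$: the same lift computation gives $\log|q\wedge C_j^F|=\Phi_f(z,\pi(C_j^F))+G^F(q)+G^F(C_j^F)+V_F$. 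Summing over $j=1,\dots,2d-2$, the multiset $\{\pi(C_j^F)\}$ is $C(f)$ counted with multiplicity, the sum $\sum_j(G^F(C_j^F)+V_F)$ is precisely the bifurcation potential $B(f)$ of \eqref{eq:bif}, and the term $(2d-2)G^F(q)$ thereby produced cancels the $-2(d-1)G^F(q)$ above, leaving exactly \eqref{eq:derivativeNevan}.

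The main obstacle I anticipate is getting the normalizations exactly right: establishing that the diagonal Wronskian equals $\tfrac1d\det DF$ on the nose (not merely up to a constant), and checking this is compatible with the chosen factorization $\det DF=\prod_j(p\wedge C_j^F)$, so that the constant $-\log|d|$ and the energy shifts $V_F$ land in the correct places. One must also verify that the limit genuinely exists and is finite, which is where the hypothesis $z\notin C(f)$ enters, guaranteeing $\Psi(q,q)\neq 0$ (equivalently, $\Phi_f(z,c)>-\infty$ for every $c\in C(f)$). The archimedean versus non-archimedean distinction causes no difficulty here, since the entire argument lives on $\bP^1\times\bP^1$, where $\delta_{\can}=[\cdot,\cdot]$ in both cases.
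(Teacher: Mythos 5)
Your proposal is correct and follows essentially the same route as the paper's proof: the paper phrases the Euler-identity step via the chordal derivative $f^\#(z)=\tfrac{1}{|d|}\tfrac{|p|^2}{|F(p)|^2}|\det DF(p)|$ (citing Jonsson), whereas you spell out the equivalent Wronskian factorization $F(p)\wedge F(q)=(p\wedge q)\Psi(p,q)$ with $\Psi(q,q)=\tfrac{1}{d}\det DF(q)$, but the substance is identical. The subsequent re-expansion of $\log|q\wedge C_j^F|$ through $\Phi_f$ and the cancellation of the $G^F$ terms against $B(f)$ also match the paper's calculation.
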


\begin{proof}
The chordal derivative of $f$ is defined by
\begin{gather*}
  f^\#(z):=\lim_{\bP^1\ni u\to z}[f(u),f(z)]/[u,z]
\end{gather*} 
on $\bP^1$. For every $z\in\bP^1\setminus C(f)$, i.e., $\deg_z f=1$,
 \begin{multline*}
  c_z(f)=\lim_{\bP^1\ni u\to z}\left\{\Phi_f(f(u),f(z))-\Phi_f(u,z)\right\}=
\log f^\#(z)-2g_f(f(z))+2g_f(z).\label{eq:derivative}
 \end{multline*}
Let $F$ be a lift $F$ of $f$. For every $z\in\bP^1$,
by a direct computation involving Euler's identity, 
\begin{gather*}
 f^\#(z)=\frac{1}{|d|}\frac{|p|^2}{|F(p)|^2}|\det DF(p)|
\end{gather*}
if $p\in\pi^{-1}(z)$ (cf.\ \cite[Theorem 4.3]{Jonsson98}). 
Choose $(C_j^F)\subset K^2\setminus\{0\}$ such that
$\det DF(p)=\prod_{j=1}^{2d-2}(p\wedge C^F_j)$.
Then $\pi(C^F_j)$ $(j=1,\ldots,2d-2)$ range over $C(f)$. 
From (\ref{eq:escaping}) and $G^F\circ F=dG^F$,
for every $z\in\bP^1\setminus C(f)$ and $p\in\pi^{-1}(z)$,
\begin{gather*}
 c_z(f)=-\log|d|+\sum_{j=1}^{2d-2}(\log|p\wedge C_j^F|-G^F(p)).
\end{gather*}
For every $(z,w)\in\bP^1\times\bP^1$,
\begin{gather*}
 \Phi_f(z,w)=\log|p\wedge q|-G^F(p)-G^F(q)-V_F
\end{gather*}
if $p\in\pi^{-1}(z)$ and $q\in\pi^{-1}(w)$. This implies that
$\sum_{j=1}^{2d-2}(\log|p\wedge C_j^F|-G^F(p))=\sum_{c\in C(f)}\Phi_f(z,c)+B(f)$.
Now the proof is complete.
\end{proof}

\begin{lemma}\label{th:noncritical}
There is $C_1>0$ such that for every $a\in\bP^1$ and every $k\in\bN$,
 \begin{gather*}
  \left|\cE_f(k,a)-\frac{1}{d^k}\sum_{j=1}^k\sum_{c\in C(f)}
  \int_{\sP^1\setminus\{c\}}\deg_{w}(f^j)\Phi_f(w,c)\rd\frac{(f^j)^*(a)}{d^j}(w)\right|
  \le \frac{C_1}{d^k}\sum_{j=1}^k\frac{D_{a,j}}{d^j}.
 \end{gather*}
The sum over $C(f)$ takes into account the multiplicity $(\deg_c f-1)$ of each $c\in C(f)$. 
\end{lemma}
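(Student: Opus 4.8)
The plan is to start from the formula $\cE_f(k,a)=d^{-2k}\int_{\sP^1}c_z(f^k)\,\rd((f^k)^*(a))(z)$ of Lemma~\ref{th:error}, to expand $c_z(f^k)$ by the chain rule of Lemma~\ref{th:chain}, to push each of the $k$ resulting summands forward under an iterate of $f$ so that the $j$-th piece becomes an integral against $(f^j)^*(a)$, and finally to substitute the explicit value of $c_w(f)$ furnished by Lemma~\ref{th:wronskian} at the non-critical $w$'s, keeping the critical $w$'s and the arising discrepancies as error terms that will be absorbed into the $D_{a,j}$'s.

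Carrying this out, one plugs \eqref{eq:chain} into Lemma~\ref{th:error}; the $j$-th summand $\deg_{f^{j}(z)}(f^{k-j})\,c_{f^{j-1}(z)}(f)=\deg_{f(f^{j-1}(z))}(f^{k-j})\,c_{f^{j-1}(z)}(f)$ depends on $z$ only through $w:=f^{j-1}(z)$, so one may push it forward by $f^{j-1}$, using the iterate $(f^{m})_{*}\circ(f^{m+n})^{*}=d^{m}(f^{n})^{*}$ of $f_{*}f^{*}=d\cdot\Id$ (legitimate since iterates of $f$ commute), and then relabel $j\mapsto k-j+1$, to get
\[
\cE_f(k,a)=\frac{1}{d^k}\sum_{j=1}^k\frac{1}{d^j}\int_{\sP^1}\deg_{f(w)}(f^{j-1})\,c_w(f)\,\rd((f^j)^*(a))(w)
=\frac{1}{d^k}\sum_{j=1}^k\frac{1}{d^j}\sum_{w\in f^{-j}(a)}\deg_w(f^j)\deg_{f(w)}(f^{j-1})\,c_w(f).
\]
On the other hand, unwinding the measure $(f^j)^*(a)/d^j$ shows that the asserted main term equals $\frac{1}{d^k}\sum_{j=1}^k\frac{1}{d^j}\sum_{w\in f^{-j}(a)}(\deg_w(f^j))^2\sum_{c\in C(f),\,c\neq w}\Phi_f(w,c)$, so it remains to compare the two sums over $w$ term by term, which I would do by distinguishing whether $w\in C(f)$ or not.

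For $w\notin C(f)$ one has $\deg_w f=1$, hence $\deg_w(f^j)=\deg_{f(w)}(f^{j-1})$ and $w\neq c$ for every $c\in C(f)$; Lemma~\ref{th:wronskian} gives $c_w(f)=-\log|d|+B(f)+\sum_{c\in C(f)}\Phi_f(w,c)$, so the part $(\deg_w(f^j))^2\sum_{c}\Phi_f(w,c)$ is exactly this $w$'s contribution to the main term, while the leftover $(-\log|d|+B(f))(\deg_w(f^j))^2$ contributes, in absolute value and after summing over such $w$, at most $\bigl|\log|d|-B(f)\bigr|\cdot D_{a,j}$ since $D_{a,j}=\sum_{w\in f^{-j}(a)}(\deg_w(f^j))^2$. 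For the at most $2d-2$ points $w\in C(f)\cap f^{-j}(a)$ I would use $\deg_{f(w)}(f^{j-1})\le\deg_w(f^j)$ together with $M_0:=\max_{c\in C(f)}|c_c(f)|<\infty$ and $M_1:=\max\{|\Phi_f(c,c')|:c,c'\in C(f),\,c\neq c'\}<\infty$ (finite because $\Phi_f$ is finite off $\diag_{\bP^1}$), so that these $w$ contribute at most $M_0\,D_{a,j}$ to $\cE_f(k,a)$ and at most $(2d-2)M_1\,D_{a,j}$ to the main term. Adding the three estimates over $j=1,\dots,k$ yields the claim with $C_1=\bigl|\log|d|-B(f)\bigr|+M_0+(2d-2)M_1$, which depends only on $f$.

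The only genuinely delicate point is the finiteness of $M_0$, i.e.\ of the limit $c_c(f)$ in \eqref{eq:defquasiRiesz} when $c\in C(f)$ (at non-critical points its value is given by Lemma~\ref{th:wronskian}). Since $C(f)$ is finite this is a purely local statement at each such $c$: because $K$ has characteristic $0$, $f$ is, up to isometric changes of coordinate on source and target (cf.\ the $\PGL$-invariance recorded in Lemma~\ref{th:natural}), the power map $u\mapsto u^{\deg_c f}$ near $c$, so that $\log\bigl([f(u),f(c)]\big/[u,c]^{\deg_c f}\bigr)$ has a finite limit $\log A$ as $\bP^1\ni u\to c$; combining this with the continuity of $g_f$ (Fact~\ref{th:Holder}), the limit defining $c_c(f)$ exists and equals $\log A-2g_f(f(c))+2(\deg_c f)g_f(c)$, which is finite. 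Everything else is routine bookkeeping with local degrees.
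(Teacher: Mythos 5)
Your argument is correct and follows essentially the same route as the paper's proof of Lemma~\ref{th:noncritical}: substitute \eqref{eq:chain} into \eqref{eq:error}, push forward by $f^{j-1}$ and relabel to obtain $\cE_f(k,a)=\frac{1}{d^k}\sum_{j=1}^k\int_{\sP^1}\frac{\deg_w(f^j)}{\deg_w f}c_w(f)\,\rd\frac{(f^j)^*(a)}{d^j}(w)$, then compare with the main term using Lemma~\ref{th:wronskian} and absorb the discrepancies into $D_{a,j}$; your constant $C_1=|\log|d|-B(f)|+\max_{c\in C(f)}|c_c(f)|+(2d-2)\max_{C(f)\times C(f)\setminus\diag_{\bP^1}}|\Phi_f|$ matches the paper's up to the innocuous replacement of $|\log|d|-B(f)|$ by $|\log|d||+|B(f)|$. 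You do supply one detail the paper leaves implicit, namely the local-normal-form argument (using characteristic $0$) showing that $c_c(f)$ is finite at each $c\in C(f)$; this is a welcome elaboration rather than a different method.
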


\begin{proof}
Substituting \eqref{eq:chain} in \eqref{eq:error},
for every $a\in\bP^1$ and every $k\in\bN$, 
 \begin{align*}
 \cE_f(k,a)
=&\frac{1}{d^{2k}}\int_{\sP^1}c_z(f^k)\rd((f^k)^*(a))(z)\\
 =&\frac{1}{d^{2k}}\sum_{j=1}^k\int_{\sP^1}\deg_{f\circ f^{j-1}(z)}(f^{k-j}) c_{f^{j-1}(z)}(f)\rd((f^k)^*(a))(z)\\
 =&\frac{1}{d^k}\sum_{j=1}^k\int_{\sP^1}d^{-k+j-1}\deg_{f(w)}(f^{k-j})c_w(f)\cdot\rd((f^{k-j+1})^*(a))(w)\\
 =&\frac{1}{d^k}\sum_{j=1}^k\int_{\sP^1}d^{-j}\deg_{f(w)}(f^{j-1})c_{w}(f)\rd((f^j)^*(a))(w)\\
 =&\frac{1}{d^k}\sum_{j=1}^k\int_{\sP^1}\frac{\deg_w(f^j)}{\deg_w f}c_{w}(f)\rd\frac{(f^j)^*(a)}{d^j}(w).
 \end{align*}
Hence using Lemma \ref{th:wronskian}, 
\begin{multline*} 
\left|\cE_f(k,a)
-\frac{1}{d^k}\sum_{j=1}^k\sum_{c\in C(f)}
  \int_{\sP^1\setminus\{c\}}\deg_{w}(f^j)\Phi_f(w,c)\rd\frac{(f^j)^*(a)}{d^j}(w)\right|\\
=\Bigg|\frac{1}{d^k}\sum_{j=1}^k
\Biggl(
\int_{C(f)}\deg_w(f^j)\frac{c_w(f)}{\deg_w f}\rd\frac{(f^j)^*(a)}{d^j}(w)+\\
+\int_{\sP^1\setminus C(f)}\deg_{w}(f^j)\left(c_{w}(f)-\sum_{c\in C(f)}\Phi_f(w,c)\right)\rd\frac{(f^j)^*(a)}{d^j}(w)\\
-\sum_{c\in C(f)}
  \int_{C(f)\setminus\{c\}}\deg_{w}(f^j)\Phi_f(w,c)\rd\frac{(f^j)^*(a)}{d^j}(w)\Biggr)\Biggr|
\le \frac{1}{d^k}\sum_{j=1}^kC_1\frac{D_{a,j}}{d^j},
\end{multline*}
where we set
$C_1:=\max_{c\in C(f)}|c_c(f)|+(2d-2)\max_{C(f)\times C(f)\setminus\diag_{\bP^1}}|\Phi_f|+(|\log|d||+|B(f)|)<\infty$.
\end{proof}

\begin{lemma}\label{th:lower}
There is $C_2>0$ such that 
for every $a\in\bP^1$ and every $k\in\bN$,
\begin{multline*}
\frac{1}{d^k}\sum_{j=1}^k\left(\eta_{a,j}\sum_{c\in C(f)\setminus f^{-j}(a)}\frac{1}{d^j}\Phi_f(f^j(c),a)\right)
-\frac{C_2}{d^k}\sum_{j=1}^k\eta_{a,j}-\frac{C_{f,a}}{d^k}\\
\le\frac{1}{d^k}\sum_{j=1}^k\sum_{c\in C(f)}
 \int_{\sP^1\setminus\{c\}}\deg_{w}(f^j)\Phi_f(w,c)\rd\frac{(f^j)^*(a)}{d^j}(w)\\
\le
\frac{1}{d^k}\sum_{j=1}^k\left(\sum_{c\in C(f)\setminus f^{-j}(a)}\frac{1}{d^j}\Phi_f(f^j(c),a)\right)
+\frac{C_2}{d^k}\sum_{j=1}^k\eta_{a,j}+\frac{C_{f,a}}{d^k}.
\end{multline*}
Here the extra constant $C_{f,a}\ge 0$ is independent of $k$, and
even vanishes if $a\in\bP^1\setminus\CO(f)_{\wan}$.
The sums over $C(f)$ take into account the multiplicity $(\deg_c f-1)$ of each $c\in C(f)$. 
\end{lemma}

\begin{proof}
For each $c\in C(f)$, 
\begin{gather}
 \Phi_f(\cdot,c)=\left(\Phi_f(\cdot,c)-\max\{0,\sup_{w\in\bP^1}\Phi_f(w,c)\}\right)
+\max\{0,\sup_{w\in\bP^1}\Phi_f(w,c)\},\label{eq:decomp} 
\end{gather}
where the first and second terms of the right hand side are $\le 0$ and $\ge 0$,
respectively. Set
\begin{gather*}
 C':=(2d-2)\max_{c\in C(f)}\max\left\{0,\sup_{w\in\bP^1}\Phi_f(w,c)\right\}\le
 2(2d-2)\sup_{\bP^1}|g_f|<\infty.
\end{gather*}
Then for every $a\in\bP^1$ and every $j\in\bN$, using the decomposition \eqref{eq:decomp},
\begin{multline*}
\eta_{a,j}\cdot
\sum_{c\in C(f)}\int_{\sP^1\setminus\{c\}}\Phi_f(w,c)\rd\frac{(f^j)^*(a)}{d^j}(w)
-C'\eta_{a,j}\\
\le\sum_{c\in C(f)}
 \int_{\sP^1\setminus\{c\}}\deg_{w}(f^j)\Phi_f(w,c)\rd\frac{(f^j)^*(a)}{d^j}(w)\\
\le\sum_{c\in C(f)}\int_{\sP^1\setminus\{c\}}\Phi_f(w,c)\rd\frac{(f^j)^*(a)}{d^j}(w)
+C'd^{-j}D_{a,j},
\end{multline*}
and using Lemma \ref{th:derivative} for $f^j$,
\begin{multline*}
\eta_{a,j}\left(
\sum_{c\in C(f)\setminus f^{-j}(a)}U_{(f^j)^*(a)/d^j}(c)
+\sum_{c\in C(f)\cap f^{-j}(a)}\frac{1}{d^j}c_c(f^j)\right)
-C'\eta_{a,j}\\
\le\sum_{c\in C(f)}
 \int_{\sP^1\setminus\{c\}}\deg_{w}(f^j)\Phi_f(w,c)\rd\frac{(f^j)^*(a)}{d^j}(w)\\
\le
\sum_{c\in C(f)\setminus f^{-j}(a)}U_{(f^j)^*(a)/d^j}(c)
+\sum_{c\in C(f)\cap f^{-j}(a)}\frac{1}{d^j}c_c(f^j)+C'd^{-j}D_{a,j}.
\end{multline*}
If $c\in (C(f)\setminus C(f)_{\wan})\cap f^{-j}(a)$, then using Lemma \ref{th:chain}, 
\begin{gather*}
\left|\frac{1}{d^j}c_c(f^j)\right|
=\frac{1}{d^j}\sum_{\ell=1}^j\deg_{f^\ell(z)}(f^{j-\ell})|c_{f^{\ell-1}(z)}(f)|
=\frac{1}{d^j}\sum_{\ell=1}^jC''d^{j-\ell}\le C''\frac{1}{d-1},
\end{gather*}
where we set
$C'':=\max_{c\in C(f)\setminus C(f)_{\wan}}\sup_{\ell\in\bN\cup\{0\}}|c_{f^\ell(c)}(f)|<\infty$.
This is finite since any $c\in C(f)\setminus C(f)_{\wan}$ is preperiodic under $f$.
Under the convention that $\sup\emptyset=0$, put
\begin{gather*}
 k_a:=\sup\{j\in\bN;C(f)_{\wan}\cap f^{-j}(a)\neq\emptyset\}<\infty.
\end{gather*}
This is finite since any $c\in C(f)_{\wan}$ is wandering, and
vanishes if and only if $a\in\bP^1\setminus\CO(f)_{\wan}$. 
Under the convention that $\sum_{\emptyset}=0$, put
\begin{gather*}
 C_{f,a}(j):=\sum_{c\in C(f)_{\wan}\cap f^{-j}(a)}|c_c(f^j)|,
\end{gather*}
which vanishes for any $j>k_a$. From these estimates,
\begin{multline}
\eta_{a,j}\left(\sum_{c\in C(f)\setminus f^{-j}(a)}U_{(f^j)^*(a)/d^j}(c)-2C''-C_{f,a}(j)\right)
-C'\eta_{a,j}\\
\le\sum_{c\in C(f)}
 \int_{\sP^1\setminus\{c\}}\deg_{w}(f^j)\Phi_f(w,c)\rd\frac{(f^j)^*(a)}{d^j}(w)\\
(\le\left(\sum_{c\in C(f)\setminus f^{-j}(a)}U_{(f^j)^*(a)/d^j}(c)
+2C''+C_{f,a}(j)\right)+C'd^{-j}D_{a,j})\\
\le\left(\sum_{c\in C(f)\setminus f^{-j}(a)}U_{(f^j)^*(a)/d^j}(c)\right)
+\left(2C''+C_{f,a}(j)+C'\right)\eta_{a,j}.\label{eq:almost}
\end{multline}
By Lemma \ref{th:Riesz}, $U_{(f^j)^*(a)/d^j}(c)=\Phi_f(f^j(c),a)/d^j$.
Set $C_2:=C'+2C''$. Under the convention that $\sum_{j=1}^0=0$, put
\begin{gather*}
C_{f,a}:=\sum_{j=1}^{k_a}C_{f,a}(j)\eta_{a,j},
\end{gather*} 
which vanishes if $a\in\bP^1\setminus\CO(f)_{\wan}$.
For every $a\in\bP^1$ and every $k\in\bN$,
$\sum_{j=1}^kC_{f,a}(j)\eta_{a,j}\le C_{f,a}$.
Take the sum of \eqref{eq:almost} over $j=1,\ldots,k$ 
and divide this by $d^k$. Now the proof is complete.
\end{proof}

Set $C_f:=C_1+C_2+(2d-2)\sup_{\bP^1}|2g_f|$. 
Then Lemmas \ref{th:noncritical} and \ref{th:lower} (with (\ref{eq:comparisoncanonical})
and $D_{a,j}\le d^j\eta_{a,j}$)
completes the proof of \eqref{eq:algebraic} in Theorem \ref{th:algebraic}.

\begin{remark}
 We can set
 \begin{multline*}
  C_f=|B(f)|+\max_{c\in C(f)}|c_c(f)|
 +2\max_{c\in C(f)\setminus C(f)_{\wan}}\sup_{\ell\in\bN\cup\{0\}}|c_{f^\ell(c)}(f)|\\
 -(2d-2)\max_{(c,c')\in C(f)\times C(f)\setminus\diag_{\bP^1}}\log[c,c']+|\log|d||+(8d-8)\sup_{\bP^1}|g_f|.
 \end{multline*}
\end{remark}

\begin{proof}[Proof of Theorem $\ref{th:characterization}$]
First of all, we show 
\begin{gather}
 E_{\wan}(f)=E_{\wan}(f)':=\bigcup_{c\in C(f)_{\wan}}
 \bigcap_{N\in\bN}\bigcup_{j\ge N}B[f^j(c),\exp(-C_0d^j)],\label{eq:accumulateagree}\\
 E_{\wan}(f)'\cap F(f)\subset\bigcup_{c\in C(f)_{\wan}\cap\PS(f)}\bigcap_{N\in\bN}\bigcup_{j\ge N}B[f^j(c),\exp(-C_0d^j)],\label{eq:accumulateFatou}\\
 E_{\wan}(f)'\cap\cJ(f)\subset\bigcup_{c\in C(f)_{\wan}\cap\cJ(f)}
 \bigcap_{N\in\bN}\bigcup_{j\ge N}B[f^j(c),\exp(-C_0d^j)],\label{eq:accumulateJulia}
\end{gather}
which are independent of Theorem \ref{th:algebraic}. 
Suppose that $a\in F(f)\setminus E(f)$. 
Then for every $c\in C(f)\cap\cJ(f)$, $\inf_{j\in\bN}[f^j(c),a]>0$. 
If $a\in\SAT(f)\setminus E(f)$, then there are $\delta>0$ and $d_a\in (1,d)$ such that 
for every $j\in\bN$ and every $c\in C(f)_{\wan}\setminus f^{-j}(a)$,
$[f^j(c),a]\ge\delta\exp(-d_a^j)$. If $a\in\AT(f)\setminus\SAT(f)$,
then there are $C>0$ and $\lambda\in(0,1]$ such that 
for every $j\in\bN$ and every $c\in C(f)_{\wan}\setminus f^{-j}(a)$,
$[f^j(c),a]\ge C\lambda^j$. Hence
Fact \ref{th:classification} implies the inclusion \eqref{eq:accumulateFatou}.
Next, suppose that $a\in\cJ(f)$.
Then for every $c\in C(f)\cap\cF(f)$ and every $j\in\bN$, by (\ref{eq:Riesz})
and (\ref{eq:comparisoncanonical}), 
\begin{gather*}
 \frac{1}{d^j}\Phi_f(f^j(c),a)=U_{(f^j)^*(a)/d^j}(c)
 \ge \min_{w\in f^{-j}(a)}\Phi_f(c,w)\ge\inf_{w\in\cJ(f)}\Phi_f(c,w)>-\infty,
\end{gather*}
and then $\log[f^j(c),a])\ge -C_0d^j$. Hence 
the inclusion \eqref{eq:accumulateJulia} holds. From these inclusions, 
$E_{\wan}(f)'\subset E_{\wan}(f)$, so the equality \eqref{eq:accumulateagree} holds.

Let us show the inclusion (\ref{eq:qualitative}).
For every $a\in\bP^1$ and every $k\in\bN$, by \eqref{eq:algebraic}, 
\begin{align}
&\notag|\cE_f(k,a)|\\
\le&\left((2d-2)\max_{j\in\{1,\ldots,k\}}
\max_{c\in C(f)\setminus f^{-j}(a)}\frac{1}{d^j}\log\frac{1}{[f^j(c),a]}
+C_f+C_{f,a}\right)\frac{1}{d^k}\sum_{j=1}^k\eta_{a,j}.\label{eq:absolutevalue}
\end{align}
For every $a\in\bP^1\setminus E_{\wan}(f)'$, there is $N=N(a)\in\bN$ such that
for every $j>N$ and every $c\in C(f)\setminus f^{-j}(a)$, 
$\log[f^j(c),a]\ge -C_0d^j$. Then
\begin{gather}
\sup_{j\in\bN}\max_{c\in C(f)\setminus f^{-j}(a)}\frac{1}{d^j}\log\frac{1}{[f^j(c),a]}<\infty.\label{eq:upperestimate}
\end{gather}
Hence for every $a\in\bP^1\setminus(E_{\wan}(f)\cup E(f))=\bP^1\setminus(E_{\wan}(f)'\cup E(f))$,
this and \eqref{eq:absolutevalue} together with
\eqref{eq:non-exceptional} imply that $\lim_{k\to\infty}\cE_f(k,a)=0$, so
$a\not\in E_{\Fekete}(f)$.

Let us show the final assertion. Fix a pre-recurrent $c\in C(f)_{\wan}$.
Then since $c$ is pre-recurrent,
for every $N\in\bN$, $\{f^k(c);k\ge N\}\cap\omega(c)$ is dense in $\omega(c)$ under $[\cdot,\cdot]$.
 Hence by the Baire category theorem, 
\begin{gather*}
 B_c:=\bigcap_{N\in\bN}\bigcup_{k\ge N}\left(B[f^k(c),\exp(-d^{3k})]\cap\omega(c)\right)
\end{gather*} 
 is $G_{\delta}$-dense in $\omega(c)$ under $[\cdot,\cdot]$. 
 We have already seen that $E(f)\subset E_{\Fekete}(f)$. 
 For every $a\in B_c\setminus E(f)$,
 since $c$ is wandering, for every $k\in\bN$ large enough,
 $f^k(c)\neq a$. Since $d^{-k}\log[f^k(c),a]\le-d^{2k}$ for infinitely many $k\in\bN$,
 from the upper estimate of $\cE_f(k,a)$ in (\ref{eq:algebraic}) 
 together with (\ref{eq:non-exceptional}),
 $\cE_f(k,a)\le -d^k+o(1)$ as $k\to\infty$. Hence $a\in E_{\Fekete}(f)$. 
\end{proof}

\begin{proof}[Proof of Theorems $\ref{th:order}$ and $\ref{th:derived}$]
 Theorem \ref{th:order} follows by substituting
 (\ref{eq:non-classical}) and (\ref{eq:absolutevalue}) in
 (\ref{eq:cauchyhyp}) and (\ref{eq:cauchy}) in Proposition \ref{th:cauchy}, respectively.

 For every $a\in\bP^1\setminus E_{\wan}(f)=\bP^1\setminus E_{\wan}(f)'$, 
 by \eqref{eq:upperestimate},
\begin{gather*}
  C:=C_{\FRL}\sqrt{(2d-2)\sup_{j\in\bN}\max_{c\in C(f)\setminus f^{-j}(a)}\frac{1}{d^j}\log\frac{1}{[f^j(c),a]}+C_f+C_{f,a}+1}<\infty.
 \end{gather*}
 For every $a_0\in\bP^1\setminus\PC(f)$ and every $r_0\in(0,[a_0,\PC(f)])$,
 \begin{gather*}
  \inf_{a\in B[a_0,r_0]}[a,\PC(f)]
  \ge\begin{cases}
      [a_0,\PC(f)]-r_0 & (\text{if }K\text{ is archimedean})\\
      r_0 & (\text{if }K\text{ is non-archimedean})
     \end{cases},
 \end{gather*} 
 so
 $C':=C_{\FRL}\sqrt{(2d-2)\sup_{a\in B[a_0,r_0]}\log(1/[\PC(f),a])+C_f+1}<\infty$.
 Now both inequalities in Theorem \ref{th:derived}
 follow from (\ref{eq:general}) in Theorem \ref{th:order} and Remark \ref{th:better}.
\end{proof}

\section{Proof of Proposition \ref{th:cauchy}}\label{sec:error}
Let $f$ be a rational function on $\bP^1=\bP^1(K)$ of degree $d>1$.

\begin{lemma}\label{th:reduction}
 For every $a\in\sP^1$ and every $k\in\bN$,
 \begin{gather*}
  \cE_f(k,a)=-\left(\frac{(f^k)^*(a)}{d^k}-\mu_f,\frac{(f^k)^*(a)}{d^k}-\mu_f\right)_f.
 \end{gather*}
\end{lemma}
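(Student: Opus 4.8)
The plan is to expand the $f$-energy of the signed measure $(f^k)^*(a)/d^k-\mu_f$ by bilinearity of the functional in \eqref{eq:FRL}, and to exploit that $\mu_f$ is the unique $F$-equilibrium measure on $\sP^1$, so that its $f$-potential vanishes identically: $U_{\mu_f}\equiv 0$ on $\sP^1$ (Lemma \ref{th:Frostman} with \eqref{eq:Arakerov}).

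First I would abbreviate $\nu_k:=(f^k)^*(a)/d^k$, a probability Radon measure on $\sP^1$, so that by the definition \eqref{eq:errorFekete} of $\cE_f(k,a)$ one has $\cE_f(k,a)=-(\nu_k,\nu_k)_f$. Here $(\nu_k,\nu_k)_f$ is a genuine real number: $\nu_k$ is supported either on finitely many points of $\sH^1$ (when $a\in\sH^1$) or on the finitely many \emph{distinct} points of $f^{-k}(a)\subset\bP^1$ (when $a\in\bP^1$), on which $\Phi_f$ is finite off $\diag_{\bP^1}$ and everywhere bounded above by \eqref{eq:comparison}.

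Next I would record the two properties of $\mu_f$ that do the work: $\mu_f$ has no atom in $\bP^1$, whence $(\mu_f\times\mu_f)(\diag_{\bP^1})=(\nu_k\times\mu_f)(\diag_{\bP^1})=0$; and $U_{\mu_f}\equiv 0$ on $\sP^1$. Consequently, for each of the product measures $\mu_f\times\mu_f$, $\nu_k\times\mu_f$ and $\mu_f\times\nu_k$, deleting $\diag_{\bP^1}$ from the domain of integration in \eqref{eq:FRL} changes nothing, and the Fubini theorem together with the symmetry $\Phi_f(z,w)=\Phi_f(w,z)$ gives
\begin{gather*}
 (\mu_f,\mu_f)_f=-\int_{\sP^1}U_{\mu_f}\,\rd\mu_f=0,\qquad
 (\nu_k,\mu_f)_f=(\mu_f,\nu_k)_f=-\int_{\sP^1}U_{\mu_f}\,\rd\nu_k=0.
\end{gather*}
Since each of these three cross terms is therefore an absolutely convergent (in fact vanishing) integral, the functional $\int_{\sP^1\times\sP^1\setminus\diag_{\bP^1}}\Phi_f\,\rd(\cdot\times\cdot)$ is bilinear along the decomposition of $(\nu_k-\mu_f)\times(\nu_k-\mu_f)$ into the four products above, and hence
\begin{gather*}
 \left(\nu_k-\mu_f,\nu_k-\mu_f\right)_f
 =(\nu_k,\nu_k)_f-2(\nu_k,\mu_f)_f+(\mu_f,\mu_f)_f
 =(\nu_k,\nu_k)_f=-\cE_f(k,a),
\end{gather*}
which is the asserted identity.

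The one delicate point — and the step I expect to be the main, if mild, obstacle — is the interplay with the excluded diagonal: the product $\nu_k\times\nu_k$ genuinely charges $\diag_{\bP^1}$ when $a\in\bP^1$, with mass $(\nu_k\times\nu_k)(\diag_{\bP^1})=D_{a,k}/d^{2k}>0$, so one may not simply drop the constraint $(z,w)\notin\diag_{\bP^1}$ inside $(\nu_k,\nu_k)_f$ itself. The resolution is exactly that $\mu_f$ carries no atom in $\bP^1$, which annihilates this obstruction in every term that actually involves $\mu_f$, leaving $(\nu_k,\nu_k)_f$ as the only surviving contribution.
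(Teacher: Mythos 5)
Your proof is correct and takes essentially the same route as the paper: expand $(\nu_k-\mu_f,\nu_k-\mu_f)_f$ by bilinearity and annihilate the cross terms and $(\mu_f,\mu_f)_f$ using $U_{\mu_f}\equiv 0$ together with the absence of atoms of $\mu_f$ in $\bP^1$. The paper leaves implicit the point you spell out explicitly—that only the $\nu_k\times\nu_k$ term actually charges $\diag_{\bP^1}$—but the essential mechanism is identical.
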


\begin{proof}
From the bilinearity
and the definition (\ref{eq:errorFekete}) of $\cE_f(k,a)$,
\begin{multline*}
 \left(\frac{(f^k)^*(a)}{d^k}-\mu_f,\frac{(f^k)^*(a)}{d^k}-\mu_f\right)_f\\
=-\cE_f(k,a)-2\left(\frac{(f^k)^*(a)}{d^k},\mu_f\right)_f+(\mu_f,\mu_f)_f.
\end{multline*}
Since $\mu_f$ has no atom in $\bP^1$ and $U_{\mu_f}\equiv 0$ on $\sP^1$,
\begin{multline*}
 -\left(\frac{(f^k)^*(a)}{d^k},\mu_f\right)_f
=\int_{\sH^1}\rd\frac{(f^k)^*(a)}{d^k}(z)\int_{\sP^1}\Phi_f(z,w)\rd\mu_f(w)\\
+\int_{\bP^1}\rd\frac{(f^k)^*(a)}{d^k}(z)\int_{\sP^1\setminus\{z\}}\Phi_f(z,w)\rd\mu_f(w)=0,
\end{multline*}
and similarly, $(\mu_f,\mu_f)_f=0$.
\end{proof}

 The (original) Favre and Rivera-Letelier bilinear form
 $(\mu,\mu')_{\infty}$ (\cite[\S 4.4]{FR06}) was defined by 
 using the (extended) planar logarithmic kernel
 \begin{gather*}
 \log\delta_{\infty}(z,w)=\log\delta_{\can}(z,w)-\log\delta_{\can}(z,\infty)-\log\delta_{\can}(w,\infty)  
 \end{gather*}
 on $\sP^1$ instead of $\Phi_f$ (note that $[z,w]=|z-w|[z,\infty][w,\infty]$ for $z,w\in K$).
For each $a\in\sP^1$ and each $k\in\bN$ such that 
$f^{-k}(a)\subset\sP^1\setminus\{\infty\}$, put
\begin{gather}
 m_k(a):=
\left(\frac{(f^k)^*(a)}{d^k}-\mu_f,\frac{(f^k)^*(a)}{d^k}-\mu_f\right)_{\infty}+\cE_f(k,a).\label{eq:difference}
\end{gather}

\begin{lemma}\label{th:compare}
For every $a\in\sH^1$ and every $k\in\bN$, 
$f^{-k}(a)\subset\sP^1\setminus\{\infty\}$ and $m_k(a)=0$.
Suppose that 
\begin{itemize}
 \item $\infty$ is a repelling periodic point of $f$, or 
 \item $\infty$ is a non-repelling periodic point of $f$ 
       and $K$ is non-archimedean,
\end{itemize}
and put $p:=\min\{j\in\bN;f^j(\infty)=\infty\}$. 
Then for every $r\in(0,1)$ small enough,
there is $C>0$ such that 
for every $a\in\bP^1\setminus\bigcup_{j=0}^{p-1}f^j(B[\infty,r])$
and every $k\in\bN$, $f^{-k}(a)\subset\sP^1\setminus\{\infty\}$ and 
$|m_k(a)|\le Ckd^{-2k}D_{a,k}$.
\end{lemma}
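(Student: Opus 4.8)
The plan is to turn the difference $m_k(a)$ of the two bilinear forms into an explicit finite sum by comparing the kernels $\Phi_f$ and $\log\delta_\infty$. Put $h(z):=\log\delta_{\can}(z,\infty)-g_f(z)$ on $\sP^1$; it is upper semicontinuous, finite and continuous on $\sP^1\setminus\{\infty\}$, with $h(z)=-\infty$ only at $z=\infty$. From $\Phi_f(z,w)=\log\delta_{\can}(z,w)-g_f(z)-g_f(w)$ and $\log\delta_\infty(z,w)=\log\delta_{\can}(z,w)-\log\delta_{\can}(z,\infty)-\log\delta_{\can}(w,\infty)$ we read off $\Phi_f(z,w)-\log\delta_\infty(z,w)=h(z)+h(w)$. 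Setting $\nu_k:=(f^k)^*(a)/d^k-\mu_f$ and using Lemma \ref{th:reduction} (so $\cE_f(k,a)=-(\nu_k,\nu_k)_f$), whenever $f^{-k}(a)\subset\sP^1\setminus\{\infty\}$, so that $m_k(a)$ is defined,
\[
 m_k(a)=(\nu_k,\nu_k)_\infty-(\nu_k,\nu_k)_f=\int_{\sP^1\times\sP^1\setminus\diag_{\bP^1}}\bigl(h(z)+h(w)\bigr)\,\rd(\nu_k\times\nu_k).
\]
Here $h\in L^1(\mu_f)$: from $U_{\mu_f}\equiv0$, i.e.\ $\int_{\sP^1}\log\delta_{\can}(\cdot,w)\,\rd\mu_f(w)=g_f+\int_{\sP^1}g_f\,\rd\mu_f$, one gets $\int_{\sP^1}h\,\rd\mu_f=g_f(\infty)$, while $h$ is bounded above on $\sP^1$; as $h$ is also finite on the finite set $f^{-k}(a)$, the integrand is $(\nu_k\times\nu_k)$-integrable, and since $\nu_k(\sP^1)=d^k/d^k-1=0$ the integral of $h(z)+h(w)$ over all of $\sP^1\times\sP^1$ equals $2\nu_k(\sP^1)\int_{\sP^1}h\,\rd\nu_k=0$. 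Hence
\[
 m_k(a)=-\int_{\diag_{\bP^1}}\bigl(h(z)+h(w)\bigr)\,\rd(\nu_k\times\nu_k),
\]
and on $\diag_{\bP^1}$ the measure $\nu_k\times\nu_k$ is carried by the $\bP^1$-atoms of $\nu_k$, which are precisely those of $(f^k)^*(a)/d^k$ (since $\mu_f$ has no atom in $\bP^1$).

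If $a\in\sH^1$, then $f$ preserves $\sH^1$, so $f^{-k}(a)\subset\sH^1\subset\sP^1\setminus\{\infty\}$ and $(f^k)^*(a)/d^k$ has no $\bP^1$-atom; thus $\nu_k\times\nu_k$ gives no mass to $\diag_{\bP^1}$ and $m_k(a)=0$. If instead $\infty$ is periodic of period $p$, fix $r\in(0,1)$ small enough that $W_r:=\bigcup_{j=0}^{p-1}f^j(B[\infty,r])\ne\bP^1$ and take $a\in\bP^1\setminus W_r$. Since $\infty\in B[\infty,r]$, periodicity gives $f^k(\infty)\in\{f^j(\infty):0\le j<p\}\subset W_r$, hence $f^k(\infty)\ne a$, i.e.\ $\infty\notin f^{-k}(a)$; so $f^{-k}(a)\subset\sP^1\setminus\{\infty\}$ and the formula above yields $m_k(a)=-\frac{2}{d^{2k}}\sum_{w\in f^{-k}(a)}h(w)\,(\deg_w(f^k))^2$. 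It remains to bound $|h(w)|\le\sup_{\sP^1}|g_f|+\log(1/[w,\infty])$ by $O(k)$ for $w\in f^{-k}(a)$. If $[w,\infty]\ge r$ this is at most $\sup_{\sP^1}|g_f|+\log(1/r)$. If $[w,\infty]<r$, the claim is that there is $m$ with $1\le m\le\lfloor k/p\rfloor$ and $[f^{mp}(w),\infty]\ge r$: otherwise $f^{jp}(w)\in B[\infty,r]$ for all $0\le j\le\lfloor k/p\rfloor$, hence $f^{n}(w)=f^{i}(f^{jp}(w))\in f^{i}(B[\infty,r])$ for every $n=jp+i$ with $0\le j\le\lfloor k/p\rfloor$, $0\le i<p$; as $k\le(\lfloor k/p\rfloor+1)p-1$ (since $(\lfloor k/p\rfloor+1)p>k$) is such an $n$, this would put $f^k(w)\in W_r$, contradicting $f^k(w)=a$. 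Now by Fact \ref{th:Holder} the extended $f$ is $M$-Lipschitz on $(\sP^1,\sd)$ for some $M>d$ and $\sd|_{\bP^1}=[\cdot,\cdot]$, so $r\le[f^{mp}(w),\infty]=[f^{mp}(w),f^{mp}(\infty)]\le M^{mp}[w,\infty]$, giving $[w,\infty]\ge rM^{-mp}\ge rM^{-k}$. Thus in all cases $\log(1/[w,\infty])\le k\log M+\log(1/r)$, so $|h(w)|\le C_0 k$ with $C_0:=\sup_{\sP^1}|g_f|+\log M+\log(1/r)>0$ (using $k\ge1$), and
\[
 |m_k(a)|\le\frac{2}{d^{2k}}\Bigl(\max_{w\in f^{-k}(a)}|h(w)|\Bigr)\sum_{w\in f^{-k}(a)}(\deg_w(f^k))^2\le 2C_0\,k\,d^{-2k}D_{a,k},
\]
so $C:=2C_0$ works, uniformly over $a\in\bP^1\setminus W_r$ and $k\in\bN$.

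The kernel comparison and the integrability bookkeeping around $\diag_{\bP^1}$ are routine; the step that takes a little thought is the ``escape'' argument, showing that a preimage of a point $a$ kept away from the forward orbit of $\infty$ cannot lie within chordal distance $rM^{-k}$ of $\infty$. Combined with the $M$-Lipschitz bound of Fact \ref{th:Holder}, this is exactly what produces the factor $k$ (rather than a constant) in $|m_k(a)|\le Ckd^{-2k}D_{a,k}$; the hypothesis on $\infty$ enters only through its periodicity, which makes $p$ meaningful and confines $f^k(\infty)$ to the finite set $\{f^j(\infty)\}_{0\le j<p}$.
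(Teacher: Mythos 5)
Your proof is correct, and the first half (reducing $m_k(a)$ to a weighted sum of $h$ over the $\bP^1$-atoms of $(f^k)^*(a)/d^k$, via the kernel identity $\Phi_f-\log\delta_\infty=h(z)+h(w)$ and the fact that $\nu_k(\sP^1)=0$ while $\mu_f$ has no atom in $\bP^1$) is essentially the paper's computation with $\phi_\infty=-h$. The interesting divergence is in the escape estimate: you bound $[w,\infty]$ from below for $w\in f^{-k}(a)$ by combining the period-$p$ escape-time argument with the \emph{global} $M$-Lipschitz continuity of $f$ on $(\sP^1,\sd)$ from Fact \ref{th:Holder} (noting $\sd|_{\bP^1\times\bP^1}=[\cdot,\cdot]$), getting $[w,\infty]\ge rM^{-k}$. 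The paper instead splits cases: for repelling $\infty$ it applies the Koebe $1/4$-theorem (and its non-archimedean Koebe $1$-theorem) to the inverse branch $f^{-p}_\infty$ to obtain a comparable $O(k)$ bound, and for non-repelling $\infty$ over non-archimedean $K$ it uses the strong triangle inequality to get the $f^p$-invariance of $B[\infty,r]$, which even gives an $O(1)$ bound. Your Lipschitz route is more uniform, avoids the dichotomy entirely, and in fact proves the conclusion for \emph{any} periodic $\infty$ (the two-item hypothesis on $\infty$ never enters your argument except through its periodicity); the paper's route buys a sharper constant in terms of the multiplier $\lambda$ rather than $M$, but since the target is only $O(kd^{-2k}D_{a,k})$ this gains nothing here. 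One small point of exposition: when $k<p$ there is no $m$ with $1\le m\le\lfloor k/p\rfloor$, so the ``claim'' as literally stated is vacuously false; but as you note, the ``otherwise'' branch then already yields a contradiction (so $[w,\infty]\ge r$ outright), and the final bound $[w,\infty]\ge rM^{-k}$ holds in all cases. It would be slightly cleaner to state the conclusion as the lower bound on $[w,\infty]$ rather than as the existence of such an $m$.
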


\begin{proof}
Put 
$\phi_{\infty}:=g_f-\log\delta_{\can}(\cdot,\infty)$ on $\sP^1$.
For each $a\in\sP^1$ and each $k\in\bN$, if $f^{-k}(a)\subset\sP^1\setminus\{\infty\}$,
then using Lemma \ref{th:reduction},
\begin{align*}
 &m_k(a)\\
=&-\int_{\sP^1\times\sP^1\setminus\diag_{\bP^1}}
(\phi_{\infty}(z)+\phi_{\infty}(w))
\rd(\left(\frac{(f^k)^*(a)}{d^k}-\mu_f\right)\times\left(\frac{(f^k)^*(a)}{d^k}-\mu_f\right))(z,w)\\
=&-2\int_{\sP^1\times\sP^1\setminus\diag_{\bP^1}}\phi_{\infty}(z)\rd(\left(\frac{(f^k)^*(a)}{d^k}-\mu_f\right)\times\left(\frac{(f^k)^*(a)}{d^k}-\mu_f\right))(z,w)\\
=&2\int_{\diag_{\bP^1}}\phi_{\infty}(z)
\rd(\left(\frac{(f^k)^*(a)}{d^k}-\mu_f\right)\times\left(\frac{(f^k)^*(a)}{d^k}-\mu_f\right))(z,w)\\
=&2\int_{\diag_{\bP^1}}\phi_{\infty}(z)
\rd\left(\frac{(f^k)^*(a)}{d^k}\times\frac{(f^k)^*(a)}{d^k}\right)(z,w).
\end{align*}
Here the third equality follows from
\begin{gather*}
 \int_{\sP^1\times\sP^1}\phi_{\infty}(z)\rd(\left(\frac{(f^k)^*(a)}{d^k}-\mu_f\right)\times\left(\frac{(f^k)^*(a)}{d^k}-\mu_f\right))(z,w)=0,
\end{gather*}
and the final one holds since $\mu_f$ has no atom in $\bP^1$.

In particular, if $a\in\sH^1$, then $f^{-k}(a)\subset\sH^1\subset\sP^1\setminus\{\infty\}$ and 
$m_k(a)=0$.

Suppose that $\infty$ is periodic under $f$. Put 
$p:=\min\{j\in\bN;f^j(\infty)=\infty\}$ and $\lambda:=|(f^p)'(\infty)|$.  
For each $r\in(0,1)$, put
\begin{gather*}
 U(r):=\bigcup_{j=0}^{p-1}f^j(B[\infty,r]).
\end{gather*}

In the case that
$\infty$ is repelling, i.e., $\lambda>1$, for every $r\in(0,1/\sqrt{2})$ small enough,
the single-valued analytic inverse branch $f^{-p}_{\infty}$ of $f^p$ fixing $\infty$ exists on 
$B[\infty,r]$. Note that $|(f^{-p}_{\infty})'(\infty)|=\lambda^{-1}\in(0,1)$.
From the Koebe $1/4$-theorem and its non-archimedean counterpart of
(Koebe) $1$-theorem (cf.\ \cite[Proposition 3.5]{Benedetto03})
applied to this analytic inverse branch of $f^{-p}$,
for every $a\in\bP^1\setminus U(r)$ and every $k\in\bN$,
$[f^{-k}(a),\infty]\ge r/(4\sqrt{2}\lambda)^{k/p+1}>0$ and
\begin{gather*}
 \max_{z\in f^{-k}(a)}|\phi_{\infty}(z)|\le\sup_{\bP^1}|g_f|-\log r+(k/p+1)\log(4\sqrt{2}\lambda).
\end{gather*}
In the case that $\infty$ is non-repelling, i.e., $\lambda\le 1$ 
and that $K$ is also non-archimedean,
for every $r\in(0,1)$ small enough, from the strong triangle inequality
for $[\cdot,\cdot]$, $f^p(B[\infty,r])\subset B[\infty,r]$.
Then for every $a\in\bP^1\setminus U(r)$ and every $k\in\bN$,
$[f^{-k}(a),\infty]\ge r>0$ and
$\max_{z\in f^{-k}(a)}|\phi_{\infty}(z)|\le\sup_{\bP^1}|g_f|-\log r$.

Now the proof is complete.
\end{proof}

The following extends \cite[Th\'eor\`eme 7]{FR06} to general $K$.

\begin{theorem}\label{th:regularization}
 Let $f$ be a rational function on $\bP^1=\bP^1(K)$ of degree $d>1$. 
 Then for every $a\in\sH^1$, every $C^1$-test function $\phi$ on $\sP^1$ and every $k\in\bN$,
\begin{gather}
 \left|\int_{\sP^1}\phi\rd\left(\frac{(f^k)^*(a)}{d^k}-\mu_f\right)\right|
\le\langle\phi,\phi\rangle^{1/2}\sqrt{\left(\frac{(f^k)^*(a)}{d^k}-\mu_f,\frac{(f^k)^*(a)}{d^k}-\mu_f\right)_{\infty}}.\label{eq:FRcauchyhyp}
\end{gather}
 Moreover, there is $C>0$ such that
 for every $a\in\bP^1$, every $C^1$-test function $\phi$ on $\sP^1$ and every $k\in\bN$, 
 if $f^{-k}(a)\subset K=\bP^1\setminus\{\infty\}$, then
\begin{multline}
 \left|\int_{\sP^1}\phi\rd\left(\frac{(f^k)^*(a)}{d^k}-\mu_f\right)\right|
 \le \Lip(\phi)d^{-k}+\\
+\langle\phi,\phi\rangle^{1/2}
\sqrt{\left|\left(\frac{(f^k)^*(a)}{d^k}-\mu_f,\frac{(f^k)^*(a)}{d^k}-\mu_f\right)_{\infty}\right|
+Ckd^{-2k}D_{a,k}}.\label{eq:FRcauchy}
\end{multline}
\end{theorem}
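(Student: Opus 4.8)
The plan is to reduce both estimates to a single potential‑theoretic Cauchy--Schwarz inequality, the second one after regularizing the atomic part of $(f^{k})^{*}(a)/d^{k}$; here is the core inequality. For any Radon measure $\nu$ on $\sP^{1}$ with $\nu(\sP^{1})=0$ and finite $\log\delta_{\infty}$-energy and any $C^{1}$-test function $\phi$ on $\sP^{1}$, one has $|\int_{\sP^{1}}\phi\,\rd\nu|\le\langle\phi,\phi\rangle^{1/2}\bigl(-\int_{\sP^{1}\times\sP^{1}}\log\delta_{\infty}\,\rd(\nu\times\nu)\bigr)^{1/2}$: the $\log\delta_{\infty}$-potential $u:=\int_{\sP^{1}}\log\delta_{\infty}(\cdot,w)\,\rd\nu(w)$ satisfies $\Delta u=\nu$ (the two $(\infty)$-contributions cancel because $\nu(\sP^{1})=0$), has finite Dirichlet norm with $\langle u,u\rangle=-\int\int\log\delta_{\infty}\,\rd(\nu\times\nu)$, and $\int\phi\,\rd\nu=\int\phi\,\Delta u=-\langle\phi,u\rangle$ by integration by parts (for non-archimedean $K$ via the framework of \cite[\S 5]{FR06}), so Cauchy--Schwarz for $\langle\cdot,\cdot\rangle$ finishes. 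This is already \eqref{eq:FRcauchyhyp}: when $a\in\sH^{1}$ one has $f^{-k}(a)\subset\sH^{1}$, so $\nu_{k}:=(f^{k})^{*}(a)/d^{k}-\mu_{f}$ is a mass-zero Radon measure, a finite combination of point masses of $\sH^{1}$ plus $-\mu_{f}$, of finite $\log\delta_{\infty}$-energy (no $\log$-singularities arise, the point masses sitting at balls of positive diameter), and $-\int\int\log\delta_{\infty}\,\rd(\nu_{k}\times\nu_{k})=(\nu_{k},\nu_{k})_{\infty}$.

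For \eqref{eq:FRcauchy}, assume $f^{-k}(a)\subset K$ and write $\nu_{k}=\sigma-\mu_{f}$ with $\sigma:=(f^{k})^{*}(a)/d^{k}$, which now has atoms of mass $p_{w}:=\deg_{w}(f^{k})/d^{k}$ at $w\in f^{-k}(a)$, so $\sum_{w}p_{w}^{2}=D_{a,k}/d^{2k}$. Fix a scale $\epsilon_{k}>0$ with $\log(1/\epsilon_{k})=O(k)$ and $\epsilon_{k}^{\kappa}=O(d^{-k})$ --- say $\epsilon_{k}:=d^{-k/\kappa}$, with $\kappa\in(0,1)$ the H\"older exponent of $g_{F}$ from Fact \ref{th:Holder} --- and replace each atom $(w)$ by the balayage $\lambda_{w,\epsilon_{k}}$ of $(w)$ out of the chordal ball $B[w,\epsilon_{k}]$: for non-archimedean $K$ the Dirac mass at the point of $\sH^{1}$ representing $B[w,\epsilon_{k}]$, and for $K\cong\bC$ the normalized arclength measure on $\{z:[z,w]=\epsilon_{k}\}$. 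This yields $\nu_{k}^{\epsilon_{k}}:=\sum_{w}p_{w}\lambda_{w,\epsilon_{k}}-\mu_{f}$, again mass-zero of finite $\log\delta_{\infty}$-energy, and applying the core inequality to it gives $|\int_{\sP^{1}}\phi\,\rd\nu_{k}|\le|\int_{\sP^{1}}\phi\,\rd(\nu_{k}-\nu_{k}^{\epsilon_{k}})|+\langle\phi,\phi\rangle^{1/2}(\nu_{k}^{\epsilon_{k}},\nu_{k}^{\epsilon_{k}})_{\infty}^{1/2}$. It remains to bound these two terms by $\Lip(\phi)d^{-k}$ and $\langle\phi,\phi\rangle^{1/2}(|(\nu_{k},\nu_{k})_{\infty}|+Ckd^{-2k}D_{a,k})^{1/2}$.

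For the integral discrepancy, $\int\phi\,\rd(\nu_{k}-\nu_{k}^{\epsilon_{k}})=\sum_{w}p_{w}\bigl(\phi(w)-\int\phi\,\rd\lambda_{w,\epsilon_{k}}\bigr)$, and since $\supp\lambda_{w,\epsilon_{k}}$ lies within $\sd$-distance $\epsilon_{k}$ of $w$ while a $C^{1}$-function is $\Lip(\phi)$-Lipschitz on $\sP^{1}$ under $\sd$, this is $\le\Lip(\phi)\epsilon_{k}\le\Lip(\phi)d^{-k}$. For the energy, expand $(\nu_{k}^{\epsilon_{k}},\nu_{k}^{\epsilon_{k}})_{\infty}=-\int\int\log\delta_{\infty}\,\rd(\nu_{k}^{\epsilon_{k}}\times\nu_{k}^{\epsilon_{k}})$ and compare term by term with $(\nu_{k},\nu_{k})_{\infty}$, using that ``$\delta_{\infty}(\lambda_{w,\epsilon},\,\cdot\,)=\max(\delta_{\infty}(w,\,\cdot\,),\epsilon)$'' --- the mean-value property of $\log\delta_{\infty}$ over a chordal circle, respectively the ultrametric identity $\delta_{\infty}(\cS_{B[w,\epsilon]},\,\cdot\,)=\max(\delta_{\infty}(w,\,\cdot\,),\diam B[w,\epsilon])$. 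The pairs $w=w'$ contribute $\log(1/\epsilon_{k})\sum_{w}p_{w}^{2}+O(D_{a,k}/d^{2k})=O(kd^{-2k}D_{a,k})$; each pair $w\neq w'$ contributes no more than the corresponding $\sigma$-pair of $(\nu_{k},\nu_{k})_{\infty}$, as $\max(\log\delta_{\infty}(w,w'),\log\epsilon_{k})\ge\log\delta_{\infty}(w,w')$; and the cross-terms with $\mu_{f}$ grow by at most $2\sup_{w}\int_{[z,w]<\epsilon_{k}}\log(\epsilon_{k}/[z,w])\,\rd\mu_{f}(z)$, which is $O(\epsilon_{k}^{\kappa})=O(d^{-k})$ by the Frostman bound $\mu_{f}(B[w,r])\le Cr^{\kappa}$ that follows from the H\"older continuity of $g_{F}$. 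Since $D_{a,k}\ge d^{k}$ absorbs the $O(d^{-k})$ errors, one gets $(\nu_{k}^{\epsilon_{k}},\nu_{k}^{\epsilon_{k}})_{\infty}\le|(\nu_{k},\nu_{k})_{\infty}|+Ckd^{-2k}D_{a,k}$, which is \eqref{eq:FRcauchy}. The archimedean case is in essence \cite[Th\'eor\`eme 7]{FR06}; I expect the main friction to be the non-archimedean integration-by-parts/Cauchy--Schwarz step underpinning the core inequality, together with the elementary-but-fiddly verification that a $C^{1}$-function on $\sP^{1}$ is $\Lip(\phi)$-Lipschitz under $\sd$.
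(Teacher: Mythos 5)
Your proof takes essentially the same route as the paper's, which is a direct extension of \cite[Th\'eor\`eme 7]{FR06}: the Dirichlet–Cauchy–Schwarz inequality applied to the continuous potential $\Phi_f(f^k(\cdot),a)/d^k$ when $a\in\sH^1$, and the $\epsilon$-regularization of the atomic measure at the scale $\epsilon_k=d^{-k/\kappa}$ (with $\kappa$ the H\"older exponent of $g_F$) followed by Cauchy–Schwarz when $a\in\bP^1$. The only difference is cosmetic: you re-derive the three regularization estimates of \cite[Propositions 2.9, 2.10, 4.10, 4.11]{FR06} from a balayage/mean-value description of $[z]_\epsilon$, whereas the paper cites them directly; note that your working identity $\delta_\infty(\lambda_{w,\epsilon},\cdot)=\max(\delta_\infty(w,\cdot),\epsilon)$ conflates the chordal radius $\epsilon$ of $B[w,\epsilon]$ with its affine ($\delta_\infty$-) radius, but this discrepancy is a uniformly bounded additive error in the self-energy, precisely the constant $C_{\operatorname{abs}}$ that appears in the paper's quoted estimate, so nothing is lost.
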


\begin{proof}
For every $a\in\sH^1$ and every $k\in\bN$,
$\Phi_f(f^k(\cdot),a)$ is continuous on $\sP^1$. Hence
by the Cauchy-Schwarz inequality (\cite[(32), (33)]{FR06}), 
\begin{multline*}
 \left|\int_{\sP^1}\phi\rd\left(\frac{(f^k)^*(a)}{d^k}-\mu_f\right)\right|
=\left|\int_{\sP^1}\phi\Delta\frac{\Phi_f(f^k(\cdot),a)}{d^k}\right|\\
\le\langle\phi,\phi\rangle^{1/2}\sqrt{\left(\frac{(f^k)^*(a)}{d^k}-\mu_f,\frac{(f^k)^*(a)}{d^k}-\mu_f\right)_{\infty}},
\end{multline*}
which is \eqref{eq:FRcauchyhyp}.

For each $z\in K$ and each $\epsilon>0$, 
let $[z]_{\epsilon}$ be the ($\epsilon$-)regularization 
of the Dirac measure $(z)$ on $\sP^1$ (for the definition, see \cite[\S 2.6, \S 4.6]{FR06}).
For each $a\in\bP^1$ with $f^{-k}(a)\subset K$, put
\begin{gather*}
 \left[(f^k)^*(a)\right]_{\epsilon}
:=\sum_{z\in f^{-k}(a)}(\deg_z (f^k))\cdot[z]_{\epsilon}.
\end{gather*}
Since the $f$-potential $U_{[(f^k)^*(a)]_{\epsilon}/d^k}$ is also continuous on $\sP^1$ 
(\cite[Lemmes 2.7, 4.8]{FR06}), by the Cauchy-Schwarz inequality,
\begin{multline*}
\left|\int_{\sP^1}\phi\rd\left(\frac{[(f^k)^*(a)]_{\epsilon}}{d^k}-\mu_f\right)\right|
=\left|\int_{\sP^1}\phi\Delta U_{[(f^k)^*(a)]_{\epsilon}/d^k}\right|\\
\le\langle\phi,\phi\rangle^{1/2}\sqrt{\left(\frac{[(f^k)^*(a)]_{\epsilon}}{d^k}-\mu_f,\frac{[(f^k)^*(a)]_{\epsilon}}{d^k}-\mu_f\right)_{\infty}}.
\end{multline*}
Moreover, from the construction of $\epsilon$-regularization of $(z)$,
$\left|\int_{\sP^1}\phi\rd((z)-[z]_{\epsilon})\right|\le\Lip(\phi)\epsilon$
for every $z\in K$. Hence
\begin{gather*}
 \left|\int_{\sP^1}\phi\rd\left(\frac{(f^k)^*(a)}{d^k}
-\frac{[(f^k)^*(a)]_{\epsilon}}{d^k}\right)\right|\le
\Lip(\phi)\epsilon.
\end{gather*}

Recall that $g_{\infty}$ is $\kappa$-H\"older continuous on $\sP^1$ under $\sd$,
where the $\kappa\in(0,1)$ is determined in Fact \ref{th:Holder}.
Let $\eta(\epsilon)$ be the modulus of continuity of $g_{\infty}$ under $\sd$.
In the proof of \cite[Propositions 2.9, 2.10, 4.10, 4.11]{FR06},
it is shown that for every $z\in K$ and every $\epsilon>0$, 
\begin{gather}
\begin{cases}
  |([z]_{\epsilon},\mu_f)_{\infty}-((z),\mu_f)_{\infty}|\le\epsilon+\eta(\epsilon)\quad(z\in K),\label{eq:FR}\\
 ([z]_{\epsilon},[z']_{\epsilon})_{\infty}
 \le\begin{cases}
    ((z),(z'))_{\infty} & (z,z')\in K\times K\setminus\diag_{\bP^1},\\
    C_{\operatorname{abs}}-\log\epsilon & z=z'\in K, 
   \end{cases}
\end{cases}
\end{gather}
where $C_{\operatorname{abs}}\in\bR$ is an absolute constant \cite[p.\ 329]{FR06}. 

From the bilinearity, 
\begin{multline*}
  \left(\frac{[(f^k)^*(a)]_{\epsilon}}{d^k}-\mu_f,\frac{[(f^k)^*(a)]_{\epsilon}}{d^k}-\mu_f\right)_{\infty}\\
 =\frac{1}{d^{2k}}([(f^k)^*(a)]_{\epsilon},[(f^k)^*(a)]_{\epsilon})_{\infty}
 -2\left(\frac{[(f^k)^*(a)]_{\epsilon}}{d^k},\mu_f\right)_{\infty}
 +(\mu_f,\mu_f)_{\infty},
\end{multline*}
and in the right hand side, by (\ref{eq:FR}),
\begin{align*}
 (\text{the first term})
=&\frac{1}{d^{2k}}\sum_{(z,z')\in f^{-k}(a)\times f^{-1}(a)\setminus\diag_{\bP^1}}\deg_z(f^k)\deg_{z'}(f^k)([z]_{\epsilon},[z']_{\epsilon})_{\infty}\\
 &+d^{-2k}\sum_{z\in f^{-k}(a)}(\deg_z(f^k))^2([z]_{\epsilon},[z]_{\epsilon})_{\infty}\\
 \le&\frac{1}{d^{2k}}((f^k)^*(a),(f^k)^*(a))_{\infty}
 +(C_{\operatorname{abs}}-\log\epsilon)d^{-2k}D_{a,k},
\end{align*}
\begin{multline*}
 (\text{the second term})\\
 =-2\sum_{z\in f^{-k}(a)}\frac{\deg_z(f^k)}{d^k}([z]_{\epsilon},\mu_f)_{\infty}
 \le-2\left(\frac{(f^k)^*(a)}{d^k},\mu_f\right)_{\infty}
 +2(\epsilon+\eta(\epsilon)).
\end{multline*}
Set $\epsilon=(d^k)^{-1/\kappa}(<d^{-k}\le d^{-2k}D_{a,k})$ 
and $C:=|C_{\operatorname{abs}}|+(\kappa^{-1}\log d)+4$.
From the bilinearity again, we have
\begin{multline*}
\left(\frac{[(f^k)^*(a)]_{\epsilon}}{d^k}-\mu_f,\frac{[(f^k)^*(a)]_{\epsilon}}{d^k}-\mu_f\right)_{\infty}\\
\le\left(\frac{(f^k)^*(a)}{d^k}-\mu_f,\frac{(f^k)^*(a)}{d^k}-\mu_f\right)_{\infty}
+Ckd^{-2k}D_{a,k}.
\end{multline*}

Now the proof of (\ref{eq:FRcauchy}) is complete.
\end{proof}

Let us complete the proof of Proposition \ref{th:cauchy}.
For every $a\in\sH^1$ and every $k\in\bN$, (\ref{eq:cauchyhyp})
follows from \eqref{eq:FRcauchyhyp}
and (the former half of) Lemma \ref{th:compare}.

Choose distinct cycles $\{z_1,f(z_1),\ldots, f^{p_1-1}(z_1)\}$,
$\{z_2, f(z_2),\ldots, f^{p_2-1}(z_2)\}$ of $f$,
both of which being repelling if $K$ is archimedean, and
where $p_i:=\min\{k\in\bN;f^k(z_i)=z_i\}$ for $i=1,2$. 
If $r_1>0$ and $r_2>0$ are small enough, then
$U_i:=\bigcup_{j=0}^{p_i-1}f^j(B[z_i,r_i])$ for $i\in\{1,2\}$ satisfy
$(\bP^1\setminus U_1)\cup(\bP^1\setminus U_2)=\bP^1$.
Fix $i\in\{1,2\}$. Let $h_i$ be a linear fractional isometry on $\bP^1$
under $[\cdot,\cdot]$ such that $h_i(\infty)=z_i$,
put $f_{h_i}:=h_i^{-1}\circ f\circ h_i$ and identify $h_i^*(a)$ with $h_i^{-1}(a)$.
Then $h_i^*(a)\in\bP^1\setminus\bigcup_{j=0}^{p_i-1}f_{h_i}^j(B[\infty,r_i])$.
Decreasing $r_i>0$ if necessary, from \eqref{eq:FRcauchy},
(the latter half of) Lemma \ref{th:compare} and Lemma \ref{th:natural},
for every $a\in\bP^1\setminus U_i$, 
every $C^1$-test function $\phi$ on $\sP^1$ and every $k\in\bN$,
\begin{align*}
&\left|\int_{\sP^1}\phi\rd\left(\frac{(f^k)^*(a)}{d^k}-\mu_f\right)\right|
=\left|\int_{\sP^1}(h_i^*\phi)\rd\left(\frac{(f_{h_i}^k)^*(h_i^*(a))}{d^k}-\mu_{f_{h_i}}\right)\right|\\
\le& \Lip(h_i^*\phi)d^{-k}+\langle h_i^*\phi,h_i^*\phi\rangle^{1/2}
\sqrt{|\cE_{f_{h_i}}(k,h_i^*(a))|+C_ikd^{-2k}D_{h_i^*(a),k}(f_{h_i})}\\
=& \Lip(\phi)d^{-k}+\langle \phi,\phi\rangle^{1/2}
\sqrt{|\cE_f(k,a)|+C_ikd^{-2k}D_{a,k}(f)}.
\end{align*}
Here the constant $C_i>0$ is independent of $a\in(\bP^1\setminus U_i),\phi$ and $k$.

Set $C_{\FRL}:=2(\max\{C_1,C_2\}+1)$.
Then these estimates for $i\in\{1,2\}$ conclude (\ref{eq:cauchy}).\qed

\begin{acknowledgement}
 The author thanks Professors Matthew Baker, Sebastien Boucksom, Arnaud Ch\'eritat, 
 David Drasin, Charles Favre, Norman Levenberg, Nicolae Mihalache, Juan Rivera-Letelier,
 Robert Rumely and Nessim Sibony for invaluable discussions. The author
 also thanks the referee for very careful scrutiny.
 Most of this work was done during the author's visiting Institut 
 de Math\'ematiques de Jussieu, and the author is grateful to the hospitality.
 The author also thanks ICERM, Brown University, where this was completed.
 This work was partially supported by JSPS Grant-in-Aid for
 Young Scientists (B), 21740096.
\end{acknowledgement}

\def\cprime{$'$}

\end{document}